\theoremstyle{plain}
\newtheorem{theorem}{Theorem}[section]
\newtheorem{corollary}[theorem]{Corollary}
\newtheorem{proposition}[theorem]{Proposition}
\newtheorem{lemma}[theorem]{Lemma}
\newtheorem*{thm*}{Theorem}
\theoremstyle{definition}
\newtheorem{remark}[theorem]{Remark}
\newtheorem{definition}[theorem]{Definition}
\newtheorem{example}[theorem]{Example}
\def\on{\operatorname}
\def\scr{\EuScript}
\def\bb{\mathbb}
\def\sf{\mathsf}
\def\cal{\mathcal}
\def\RR{\bb{R}}
\def\ZZ{\bb{Z}}
\def\NN{\bb{N}}
\def\Sim{\on{Sim}}
\def\TC{\sf{TC}}
\def\LS{\sf{LS}}
\def\CCC{\sf{CC}}
\def\secat{\sf{secat}}
\def\sd{\sf{sd}}
\def\SC{\sf{SC}}
\def\star{\sf{Star}}
\title[Finite Equivariant Topological Complexity]{A Finite Equivariant Generalization of Motion Planning and Topological Complexity}
\author{Rebecca Bell, Allison Eckert, Ryan Pesak, Avery Schweitzer}
\date{}
\begin{document}

\maketitle

\begin{abstract}
This paper explores topological complexity in the finite equivariant setting. We first define and study an equivariant version of Tanaka's combinatorial complexity for finite topological spaces. We explore the relationships between this invariant and several others already discussed in the literature: Farber's topological complexity, Tanaka's combinatorial complexity, and Colman-Grant's equivariant Lusternik-Schnirelmann category. We find bounds for equivariant combinatorial complexity and for the necessary lengths of equivariant combinatorial motion plannings. We show that the equivariant topological complexity of any finite $G$-space is equal to its equivariant combinatorial complexity.

We then adapt Gonz\`{a}lez's simplicial complexity to ordered and unordered $G$-simplicial complexes and explore its first properties. Lastly, we show that the equivariant topological complexity of the realization of any ordered $G$-simplicial complex is equal to the equivariant simplicial complexity. 
\end{abstract}

\section{Introduction}\label{sec:Intro}
The topological complexity of a space $X$, denoted $\TC{(X)}$, is a homotopy invariant that was first introduced by Farber in 2003 in \cite{farber_2003}. Loosely speaking, the topological complexity measures how navigationally complex the space is by computing how many different algorithms one needs to assign a path from $x$ to $y$ to any pair of points $(x,y)\in X\times X$. Formally, it is defined to be the minimal number $k$ such that $X\times X$ can be covered by $k$ open subsets, each of which needs only one continuous rule to assign a path.

Since Farber's introduction of $\TC(X)$, a number of variants on topological complexity have appeared in the literature. Of particular interest for the purposes of this paper is the work of Colman and Grant in \cite{colman_grant_2012}, which develops a notion of \textit{equivariant topological complexity} for a space $X$ equipped with the continuous action of a topological group G. Denoted $\TC_G(X)$, this invariant could be said to measure how many rules are needed to plan paths in X in a way that respects the symmetries of $X$ encoded in the $G$-action. 

In a different but related direction, several authors have considered ways to directly compute topological complexity in a discrete setting. In \cite{gonzalez2018}, Golz\`{a}lez shows that the topological complexity of the realization of a simplicial complex $K$ can be computed in terms of simplicial maps defined on subdivisions of $K$, using an invariant he defined as \textit{simplicial complexity}, denoted $\SC(X)$. In a similar vein, in \cite{tanaka_2018}, Tanaka defines a notion of \textit{combinatorial complexity} for a space $X$, denoted $\CCC_m{(X)}$, which is a descending sequence of invariants that equals $\TC{(X)}$ at the limit. Tanaka relates Gonz\`{a}lez's simplicial approach to his combinatorial approach, thus showing that the topological complexity can be entirely computed in combinatorial terms.

In this paper, we extend upon Colman and Grant's work as well as upon Tanaka and Gonz\`{a}lez's work to define \textit{equivariant combinatorial complexity} of a finite $T_0$ $G$-space and \textit{equivariant simplicial complexity} of a finite $G$-simplicial complex. We begin by following Tanaka in defining an integer sequence $\CCC_{G,n}(X)$ depending on a finite $T_0$ $G$-space $X$ and a natural number $n$. These can be thought of as versions of Colman and Grant's equivariant topological complexity where we limit the length of the paths we choose to be less than or equal to $n$. We show that $\CCC_{G,n}(X)$ decreases as $n$ goes to infinity, and define the \textit{equivariant combinatorial complexity} to be the minimum of $\CCC_{G,n}(X)$, which we denote $\CCC_G(X)$. Our first main result is that, as in the non-equivariant case, this procedure computes the equivariant topological complexity of $X$. More precisely:

\begin{thm*}[Theorem \ref{thm:tc_g=cc_g}]
	For any finite $T_0$ $G$-space $P$, it holds that $\TC_G(P) = \CCC_G(P)$. 
\end{thm*}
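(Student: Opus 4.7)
The plan is to prove both inequalities, $\TC_G(P) \leq \CCC_G(P)$ and $\CCC_G(P) \leq \TC_G(P)$, by interconverting equivariant combinatorial motion plans of bounded fence length with equivariant continuous sections of the endpoint fibration $\pi\colon P^I \to P\times P$ over invariant subsets of $P\times P$. This is the equivariant analogue of Tanaka's theorem for finite $T_0$ spaces, so the essential task is to check that the underlying conversions, which are natural in the input, respect the $G$-action.

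For $\TC_G(P) \leq \CCC_G(P)$, I would take an equivariant combinatorial motion plan on an invariant subset $U \subseteq P \times P$ with fences of length at most $n$, and realize each assigned fence $x = x_0, x_1, \ldots, x_n = y$ as a continuous path in $P$ by concatenating elementary paths between consecutive comparable elements on the subintervals $[i/n,(i+1)/n]$; such elementary paths always exist between comparable elements of a finite $T_0$ space. The resulting continuous section $U \to P^I$ is equivariant because the combinatorial plan is, and because the realization of a fence as a path depends only on the sequence of endpoints (which the $G$-action permutes in a compatible way). This yields $\TC_G(P) \leq \CCC_{G,n}(P)$ for every $n$, and hence $\TC_G(P) \leq \CCC_G(P)$.

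For the reverse inequality, I would start with an equivariant topological motion plan realizing $\TC_G(P)$ and discretize it on each piece. The key fact, already exploited in the non-equivariant case, is that any continuous path $\gamma\colon I \to P$ in a finite $T_0$ space admits a canonical decomposition into a finite fence of bounded length, obtainable for instance from the coarsest partition of $I$ on which $\gamma$ is locally constant on open subintervals, together with the (comparable) transition values at the partition points. The length of this fence is bounded by a constant $N$ depending only on $P$, for instance by twice the length of a longest chain in the Hasse diagram. Applying this fence-extraction to each equivariant section produces an equivariant combinatorial motion plan of length $\leq N$ on the same invariant cover, giving $\CCC_{G,N}(P) \leq \TC_G(P)$ and therefore $\CCC_G(P) \leq \TC_G(P)$.

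The main obstacle will be the second direction, and more specifically arranging the fence-extraction to be canonical in the path itself, so that equivariance transfers cleanly: if $s$ satisfies $s(g\cdot x, g\cdot y) = g \cdot s(x,y)$, the associated fences must satisfy the analogous relation. Because the $G$-action on $P$ preserves the partial order (and hence the notion of comparability and of elementary paths), a fence-extraction phrased purely in terms of the path's value set and transition structure is automatically $G$-natural; verifying this carefully, and pinning down the constant $N$ in terms of intrinsic invariants of $P$, is where the bulk of the technical work lies. Combining the two inequalities then yields the theorem.
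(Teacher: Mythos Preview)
Your first inequality is fine and matches the paper's argument via precomposition with the weak equivalence $\beta\colon I\to J_m$ (the induced $\beta^*\colon P^{J_m}\to P^I$ is equivariant, so composing with the combinatorial section gives a continuous equivariant section of $\pi$). The gap is in the reverse direction $\CCC_G(P)\le\TC_G(P)$. The pathwise fence-extraction you propose does not work as stated. First, the bound you claim is false: a single continuous path $I\to P$ can oscillate arbitrarily often---in the two-point space $\{a<b\}$, send finitely many isolated points to $b$ and the rest of $I$ to $a$---so its canonical fence has no length bound depending only on $P$, let alone twice the height of the Hasse diagram. Second, and more fundamentally, even if a common length $N$ were available, you give no argument that the assignment $u\mapsto(\text{fence of }s(u))$ is \emph{continuous} as a map $U\to P^{J_N}$; nearby points of $U$ may be sent by $s$ to paths with different partition structures, and nothing forces the extracted fences to vary continuously in $u$.

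The paper (following Tanaka) sidesteps both issues by transposing via the exponential law for finite $G$-spaces (Lemma~\ref{lem:GUniversal}): an equivariant section $s\colon U\to P^I$ corresponds to a single continuous path $\tilde s\colon I\to \overline{P^{U}}$ into the \emph{finite} $T_0$ space of equivariant maps $U\to P$. That one path in a finite space has a combinatorial model $J_m\to\overline{P^{U}}$ for some $m$ depending on $s$ (not on $P$ alone), and transposing back yields a continuous equivariant section $U\to P^{J_m}$. Continuity and equivariance come for free from the adjunction, so no pathwise bound or pointwise extraction is needed.
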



We then follow Gonz\`{a}lez's work in defining an integer sequence $\SC_{G}^{b,c}(K)$ for a finite $G$-simplicial complex $K$. Again, we show that this is a descending sequence of invariants, the limit of which we denote $\SC_G(K)$. Our second main result is that, as in the non-equivariant case shown by Gonz\`{a}lez, the equivariant topological complexity of the geometric realization of an ordered $G$-simplicial complex $K$ is equal to the equivariant simplicial complexity of $K$.

\begin{thm*}[Theorem \ref{thm:SCeqTC}]
	Suppose that $K$ is any ordered $G$-simplicial complex. Then $\TC_G(|K|) = \SC_G(K)$. 
\end{thm*}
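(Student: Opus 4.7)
The plan is to follow the structure of Tanaka's non-equivariant argument, routing the comparison through the face poset $\chi(K)$, so that the earlier main theorem $\TC_G = \CCC_G$ can be invoked. Specifically, I would aim to establish the chain
\[
\TC_G(|K|) \;=\; \TC_G(\chi(K)) \;=\; \CCC_G(\chi(K)) \;=\; \SC_G(K).
\]

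The first equality would come from the equivariant McCord map $\mu\colon |K|\to\chi(K)$. Because $K$ is ordered and the $G$-action preserves the ordering on each simplex, any simplex fixed setwise by a subgroup $H\le G$ is fixed pointwise, so $\chi(K)^H = \chi(K^H)$ and $|K|^H = |K^H|$. Thus $\mu$ restricts on each fixed set to the classical McCord weak equivalence, so $\mu$ is a weak $G$-equivalence, and one then appeals to invariance of $\TC_G$ under weak $G$-equivalences of sufficiently nice $G$-spaces (with $|K|$ a $G$-CW complex and $\chi(K)$ a finite $T_0$ $G$-space). The second equality is precisely Theorem~\ref{thm:tc_g=cc_g} applied to the finite $T_0$ $G$-space $\chi(K)$.

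The third equality is where the new content lies, and I would establish it by a two-way dictionary. In one direction, an equivariant simplicial motion plan appearing in the definition of $\SC_G^{b,c}(K)$ pushes forward under the face-poset functor to an equivariant combinatorial motion plan on $\chi(K)$ whose length is controlled by $b+c$, using the fact that a simplex of an iterated subdivision is canonically encoded by a chain in $\chi(K)$. Conversely, an equivariant combinatorial motion plan of length $n$ on $\chi(K)$ can be realized as an equivariant simplicial motion plan on an $n$-fold barycentric subdivision by sending each subdivided simplex to the zig-zag of comparability relations the combinatorial path provides. In both directions the number of pieces in the cover is preserved, yielding the equality of the two invariants.

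The main obstacle is keeping this dictionary genuinely $G$-equivariant while matching the cover counts exactly. Non-equivariantly, Tanaka's construction works simplex by simplex and then assembles; equivariantly, one must choose the open covers, subdivisions, and simplicial approximations to be globally $G$-invariant as collections, without inflating the number of pieces beyond $\TC_G(|K|)$. This is precisely where the ordered hypothesis becomes essential: canonical $G$-equivariant barycentric subdivisions and a canonical $G$-equivariant simplicial structure on $K\times K$ are both available only in the ordered setting, and they are what let each construction in the dictionary be arranged to commute strictly with the $G$-action on the nose.
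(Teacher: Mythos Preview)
Your chain has a genuine gap at the very first link. The equality $\TC_G(|K|)=\TC_G(\chi(K))$ cannot be obtained by ``invariance of $\TC_G$ under weak $G$-equivalences,'' because no such invariance is available here: Colman--Grant's $\TC_G$ is a $G$-\emph{homotopy} invariant, and the equivariant Whitehead theorem only upgrades a weak $G$-equivalence to a $G$-homotopy equivalence when \emph{both} spaces are $G$-CW. The McCord map lands in the finite $T_0$ space $\chi(K)$, which is not a $G$-CW complex, so nothing forces the two sides to agree. The paper itself illustrates the failure mode: the last-vertex map $\tau\colon \sd(X)\to X$ is a weak $G$-equivalence for every finite $T_0$ $G$-space $X$, yet Theorem~\ref{thm:finiteTCG} is nontrivial precisely because $\TC_G(X)$ and $\TC_G(\sd^2(X))$ can differ (the pseudo-torus has $\LS_{\ZZ/2}=\infty$, while its double subdivision has finite invariants). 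If $\TC_G$ were weak-$G$-invariant, subdivision could never change it.

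This also breaks the third link. Granting Theorem~\ref{thm:tc_g=cc_g}, the equalities $\TC_G(|K|)=\TC_G(\chi(K))$ and $\SC_G(K)=\CCC_G(\chi(K))$ are equivalent to one another modulo the statement you are trying to prove; so your proposed ``dictionary'' cannot in general preserve the cover count, and the sketch you give (pushing a simplicial plan forward along $\chi$, or realizing a combinatorial plan simplicially) does not supply the missing control. The paper takes a completely different route, following Gonz\`alez rather than Tanaka: it compares $\SC_G(K)$ and $\TC_G(|K|)$ directly. One inequality (Proposition~\ref{prop:TCatmostSC}) realizes an invariant subcomplex cover of $\sd^b(K\times K)$ as a closed invariant cover of $|K|\times|K|$ on which $p_1\simeq_G p_2$, invoking Lemma~\ref{lem:ClosedCover}. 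The reverse inequality starts from a $\TC_G$-cover and uses the equivariant simplicial approximation statement of Corollary~\ref{cor:GContiguity} to produce, after enough subdivisions, equivariantly contiguous approximations of the two projections on each piece. No passage through $\chi(K)$ or Theorem~\ref{thm:tc_g=cc_g} is involved.
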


The remainder of the paper is organized as follows: in Section~\ref{sec:Prelims}, we begin with a background of finite $T_0$ topological spaces and their relation to finite partially ordered sets, or posets. We then provide a background on barycentric subdivision, symplicial complexes, group actions, and topological complexity.
In Section~\ref{sec:LS_G and TC_G}, we review the definitions of equivariant Lusternik-Schnirelmann category and equivariant topological complexity from \cite{colman_grant_2012}. We also recall some basic properties of these invariants from \cite{colman_grant_2012}, and prove several further properties for finite $T_0$ $G$-spaces.
In Section~\ref{sec:CC_G}, we define equivariant combinatorial complexity, we prove that the equivariant combinatorial complexity is equal to the equivariant topological complexity, and we compute the lower bound for $n$ such that $\CCC_{G,n}(X) = \CCC_G(X)$.
In Section~\ref{sec:SC_G}, we review and extend upon several properties of $G$-simplicial complexes introduced in Section~\ref{sec:Prelims}, we define equivariant simplicial complexity, and we prove that the equivariant simplicial complexity equals the equivariant topological complexity of the realization of a simplicial complex $K$.

\subsection*{Acknowledgements}
 This work was completed during the UVA REU in Topology. The authors thank Walker Stern, Matthew Feller, and Shunyu Wan for their support and guidance.  

\section{Preliminaries} \label{sec:Prelims}

\subsection{Finite Posets and Finite Topologies}

A \textit{finite topological space X} is a topological space such that the underlying set $X$ is finite. If the topology on $X$ has the property such that for any $x,y\in X$, there exists some open set $U\subseteq X$ that contains precisely one of $x$ or $y$, then this space is called a $T_0$ space, or \textit{Kolmogorov} space.

We begin by briefly recalling the connections between finite $T_0$ spaces as posets.



\begin{remark}
We often draw diagrammatic sketches of posets, drawing an arrow $a\rightarrow b$ when $a$ is less than $b$. In such diagrams, we omit arrows when they can be obtained via transitivity from arrows which have already been drawn. An example is shown in figure~\ref{fig:powerset_poset}. Note that the sets $\{1\}$ and $\{3\}$ are incomparable, and that $\varnothing \subseteq \{1,2,3\}$ but the arrow is not directly shown since it can be assumed from transitivity. We have chosen these more categorically-flavored diagrams rather than, e.g., Hasse diagrams, to emphasize the connection between finite spaces and simplicial complexes.

\end{remark}
 
\begin{figure}
    \centering
    \[\begin{tikzcd}[ampersand replacement=\&]
	\& {\{1, 2, 3\}} \\
	{\{1, 2\}} \& {\{1, 3\}} \& {\{2, 3\}} \\
	{\{1\}} \& {\{2\}} \& {\{3\}} \\
	\& \varnothing
	\arrow[from=4-2, to=3-1]
	\arrow[from=4-2, to=3-2]
	\arrow[from=4-2, to=3-3]
	\arrow[from=3-1, to=2-1]
	\arrow[from=3-1, to=2-2]
	\arrow[from=3-3, to=2-2]
	\arrow[from=3-3, to=2-3]
	\arrow[from=2-3, to=1-2]
	\arrow[from=2-2, to=1-2]
	\arrow[from=2-1, to=1-2]
	\arrow[from=3-2, to=2-3, crossing over]
	\arrow[from=3-2, to=2-1, crossing over]
    \end{tikzcd}\]
    \caption{A poset diagram for $\bb{P}(\{1, 2, 3\})$.}
    \label{fig:powerset_poset}
\end{figure}
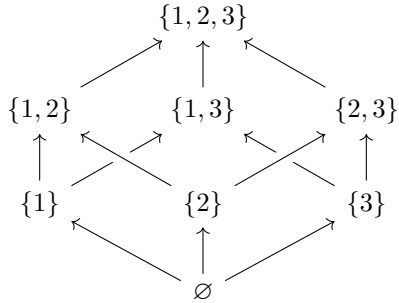


If X is a finite $T_0$ topological space, we can construct a poset, $P(X)$ from this space by defining an order relation on $X$ via the following procedure: if $x,y\in X$, then let $x\geq y$ if and only if $x\in \overline{\{y\}}$, where $\overline{\{y\}}$ is the closure of the singleton $\{y\}$. It can be shown that $P(X)$ is a poset.

Similarly, we can construct a finite $T_0$ topological space $T(Q)$ from a finite poset $Q$. We define the subsets $U\subseteq Q$ to be \textit{downwards-closed} if for every $y\in Q$, if there is an $x\in Q$ such that $y\leq x$, then $y\in Q$. The open sets in $T(Q)$ are the precisely the downwards-closed sets of $Q$. The resulting topological space $T(Q)$ is a finite $T_0$ space. 


These constructions yield an equivalence of categories between the category of finite $T_0$ spaces and the category of finite posets.

\begin{remark}
For the duration of this paper, we will use the terms ``finite poset'' and ``finite $T_0$ space'' interchangeably, as they essentially refer to the same object. The order on a finite $T_0$ space $X$ will always be the one induced by the topology, and the topology on a poset $P$ will always be the one induced by its order.
\end{remark}

\begin{definition}
Let $P$ be a finite poset and $x \in P$. The {\it downwards-closure} of $x$, denoted $U_x$, is the set of all $y \in P$ such that $y \leq x$. It can be shown that $U_x$ is equivalently the smallest open set containing $x$, i.e., the intersection of all open sets containing $x$.
\end{definition}

\begin{example}
The \textit{combinatorial interval} ($J_m$) of length $m+1$ is the poset 
\[
0<1>2<...>(<)m.
\]   

The open sets on the topology associated to $J_m$ are the downwards closed sets. For example, the open sets on the topological space associated to $J_3$ are shown in figure \ref{fig:comb_int_open_sets}.
\end{example}

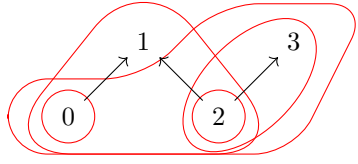
\begin{figure}
    \centering
    \begin{tikzpicture}
\node (0) at (0, 0) {$0$};
\node (1) at (1, 1) {$1$};
\node (2) at (2, 0) {$2$};
\node (3) at (3, 1) {$3$};

\draw[red, rounded corners=30pt] (-1, -0.5) -- (1, 2) -- (3, -0.5) -- cycle; 
\draw[rotate=45, red] (2, -1.41) ellipse (1.1 and 0.6);
\draw[red] (0) circle (10pt);
\draw[red] (2) circle (10pt);
\draw[red, rounded corners=15pt] (4, 1.5) -- (2, 1.5) -- (1, 0.5) -- (-0.8, 0.5) -- (-0.8, -0.5) -- (3, -0.5) -- cycle;

\draw[->] (0) -- (1);
\draw[->] (2) -- (1);
\draw[->] (2) -- (3);
\end{tikzpicture}
    \caption{The combinatorial interval $J_3$ with all its open sets highlighted in red except for $J_3$ itself and $\varnothing$}
    \label{fig:comb_int_open_sets}
\end{figure}

We observe that there is a surjective continuous map $f: I\to J_m$ defined by 
\[
f(t):=\begin{cases}
	2i+1 &  t=\frac{2i+1}{m}; \quad i\in \ZZ \\
	2i & t\in\left(\frac{2i-1}{m},\frac{2i+1}{m}\right); \quad i\in \ZZ 
\end{cases}
\]
Per \cite{McCord}, this map is a weak homotopy equivalence. As such we can think of $J_m$ as a finite analogue of the interval.

\begin{definition}
Using $J_m$ as a finite analogue of the interval $I$, we can define a \textit{combinatorial path} from $x$ to $y$ to be a map $\beta: J_m\rightarrow X$ such that $\beta(0) = x$ and $\beta(m) = y$. 

Equipped with $J_m$, we can defined \textit{combinatorial homotopy}. Let $Q$ and $R$ be finite posets and $f,g:Q\rightarrow R$ be monotone maps. A combinatorial homotopy between $f$ and $g$ is a map \[h:Q\times J_m\rightarrow R\] such that $h|_{Q\times \{0\}}=f$ and $h|_{Q\times \{m\}}=g$.

\end{definition}

There exists a strong correlation between combinatorial paths and topological paths, as the following proposition shows.

\begin{proposition}
Let $X$ be a topological space and let $x,y\in X$. There is a topological path from $x$ to $y$ if and only if there is a combinatorial path from $x$ to $y$.
\end{proposition}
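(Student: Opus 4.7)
The plan is to prove each direction separately. The backward direction is short, using only the McCord map $f \colon I \to J_m$ introduced just before the proposition; the forward direction requires converting the continuous behavior of a topological path into the discrete zigzag structure of $J_m$, which is where a compactness argument enters.

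For the backward direction, suppose $\beta \colon J_m \to X$ is a combinatorial path from $x$ to $y$. Then $\beta \circ f \colon I \to X$ is continuous, and a quick check of the piecewise formula for $f$ shows $f(0) = 0$ and $f(1) = m$, so $(\beta \circ f)(0) = \beta(0) = x$ and $(\beta \circ f)(1) = \beta(m) = y$. Hence $\beta \circ f$ is the required topological path.

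For the forward direction, the content of the statement is really the case when $X$ is a finite $T_0$ space (implicit from the surrounding discussion; otherwise continuity considerations collapse combinatorial paths to constants). Let $\gamma \colon I \to X$ be a topological path in such an $X$, and recall that for each $p \in X$ the smallest open neighborhood is $U_p = \{q \in X : q \leq p\}$. The sets $\gamma^{-1}(U_{\gamma(t)})$ form an open cover of the compact interval $I$, so by the Lebesgue number lemma there exist a partition $0 = s_0 < s_1 < \cdots < s_N = 1$ and points $p_1, \ldots, p_N \in X$ with $\gamma([s_{j-1}, s_j]) \subseteq U_{p_j}$. This immediately yields the zigzag inequalities $\gamma(s_{j-1}) \leq p_j \geq \gamma(s_j)$ for each $j$. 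Defining $\beta \colon J_{2N} \to X$ by $\beta(2j) = \gamma(s_j)$ on even indices and $\beta(2j-1) = p_j$ on odd indices gives an order-preserving, hence continuous, map, with $\beta(0) = \gamma(0) = x$ and $\beta(2N) = \gamma(1) = y$.

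The main obstacle is the forward direction: translating the open-cover data given by compactness into the specific alternating-order structure of $J_{2N}$. Once the partition $\{s_j\}$ and majorants $\{p_j\}$ are in hand, the zigzag and the continuity check become routine, but interleaving the sample values $\gamma(s_j)$ (at even indices) with the covering majorants $p_j$ (at odd indices) in a way that matches the pattern $0 < 1 > 2 < \cdots$ of $J_m$ is the crucial combinatorial step.
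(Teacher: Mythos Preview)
Your proof is correct. The paper, however, does not give its own argument for this proposition at all: it simply cites Barmak \cite[Prop.~1.2.4]{barmak_2011}. So there is no ``paper's proof'' to compare against beyond that reference.

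What you have written is essentially the standard argument (and is in the spirit of Barmak's treatment): the backward direction via precomposition with the McCord map $f\colon I\to J_m$, and the forward direction via a Lebesgue-number/compactness covering of $I$ by preimages of minimal open sets $U_{\gamma(t)}$, followed by interleaving the sampled values $\gamma(s_j)$ with the majorants $p_j$ to produce the required zigzag $J_{2N}\to X$. Your remark that the statement is implicitly about finite $T_0$ spaces is appropriate given the surrounding context in the paper, though your parenthetical that otherwise ``continuity considerations collapse combinatorial paths to constants'' is not literally accurate for arbitrary $X$; it would be cleaner simply to say that the forward direction uses the minimal-open-set description $U_p$, which is the structure specific to finite (or Alexandroff) spaces.
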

\begin{proof}
Due to Barmak \cite[Prop. 1.2.4]{barmak_2011}
\end{proof}

Similarly, using $J_m$, we can define a homotopy between maps of posets.

\begin{remark}
Suppose $X$ and $Y$ are finite $T_0$ spaces. We denote $C(X,Y)$ to be the set of all continuous maps $Y \to X$. We can define a partial order on this set by requiring that for any $f, g: Y \to X$, $f \leq g$ if and only if $f(y) \leq g(y)$ for every $y \in Y$. We may induce a topology on $C(X,Y)$ via the order $\leq$ which conveniently coincides with the compact-open topology (see \cite[Prop. 1.2.5]{barmak_2011}). We call this space equipped with the compact-open topology $X^Y$. There is thus no ambiguity when we write $X^{J_m}$.

\end{remark}

\begin{definition}
We can replace the unit interval $I=[0,1]$ with the combinatorial interval $J_m$. We can then denote the \textit{combinatorial path space} by $X^{J_m}$. 
By construction, there is a continuous map $q_m:X^{J_m}\rightarrow X\times X$ which sends a path $\alpha$ in the path space $X^{J_m}$ to its endpoints $(\alpha(0),\alpha(m))$, called the \textit{combinatorial path fibration}.
\end{definition}

We now recall the notion of the \textit{subdivision} of a poset.
\begin{definition}
Suppose $P$ is a finite poset. Then the {\it barycentric subdivision} $\sd(P)$ is the set of totally ordered subsets of $P$ ordered under the relation $\subseteq$. If we subdivide the same space $P$ a total of $n$ times, we denote the resulting space $\sd^n(P)$. 
\end{definition}
It turns out that many invariants we will consider behave nicely under subdivision. For example, we shall see later that the equivariant topological complexity of $P$ is greater than or equal to the equivariant topological complexity of $\sd(P)$. Pictorially, $\sd(P)$ looks similar to $P$ itself, as can be seen in figure \ref{fig:PosetSubdivision}. 

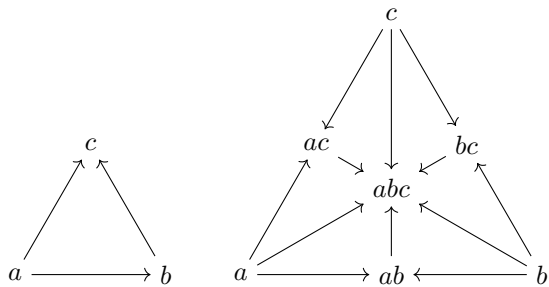
\begin{figure}
    \centering
    \begin{tikzpicture}
\node (a) at (-5, 0) {$a$};
\node (b) at (-3, 0)  {$b$};
\node (c) at (-4, 1.73) {$c$};

\draw[black, ->] (a) -- (b);
\draw[black, ->] (a) -- (c);
\draw[black, ->] (b) -- (c);

\node (a) at (-2, 0) {$a$};
\node (b) at (2, 0)  {$b$};
\node (c) at (0, 3.464) {$c$};
\node (ab) at (0, 0) {$ab$};
\node (ac) at (-1, 1.732) {$ac$};
\node (bc) at (1, 1.732) {$bc$};
\node (abc) at (0, 1.155) {$abc$};

\draw[black, ->] (ab) -- (abc);
\draw[black, ->] (ac) -- (abc);
\draw[black, ->] (bc) -- (abc);
\draw[black, ->] (a) -- (abc);
\draw[black, ->] (b) -- (abc);
\draw[black, ->] (c) -- (abc);
\draw[black, ->] (a) -- (ab);
\draw[black, ->] (b) -- (ab);
\draw[black, ->] (a) -- (ac);
\draw[black, ->] (c) -- (ac);
\draw[black, ->] (b) -- (bc);
\draw[black, ->] (c) -- (bc);
\end{tikzpicture}
    \caption{The poset $\{a < b < c\}$ and its subdivision}
    \label{fig:PosetSubdivision}
\end{figure}

\begin{definition}
We denote by $\tau_{P}:\sd(P)\to P$ the monotone \textit{last-vertex map} defined by $\{p_0<p_1<\dots<p_n\} \mapsto p_n$. When no confusion is likely to arise, we we will abuse notation by simply denoting $\tau_P$ by $\tau$.
\end{definition}

\subsection{Simplicial Complexes}

Another common way of describing topological spaces
by a finite amount of combinatorial data is via \textit{simplicial complexes}. We will use the following definition of simplicial complexes.

\begin{definition}
A \textit{simplicial complex} $K$ is a finite set of vertices, $X$ with a collection of subsets $\sf{Sim}(K) \subseteq \bb{P}(X) \setminus \varnothing$. Such that 
\begin{itemize}
    \item All singletons from $X$ are in $\sf{Sim}(K)$, and
    \item Given $Y \in \sf{Sim}(K)$ and $Z \subseteq Y$, then $Z \in \sf{Sim}(K)$.
\end{itemize}
An element of $\sf{Sim}(K)$ is called a \textit{simplex} of $K$. And given simplices $Z$ and $Y$, if $Z \subseteq Y$, we say $Z$ is a \textit{face} of $Y$.
\end{definition}
In the last section of the paper, we will provide a characterization of the equivariant topological complexity of realizations of simplicial complexes.
\begin{definition}
The \textit{geometric realization} of a simplicial complex $K$ with vertex set $X$, denoted $|K|$ is a topological space consisting of the formal linear combinations
\[ \Lambda = \sum_{x \in X} \lambda_x \cdot x \]
where $\lambda_x \in \RR$. For any such formal linear combination, we define a simplex $S_\Lambda := \{x \in X \mid \lambda_x \neq 0\}$. Finally, we can define $|K|$ as follows
\[|K| := \left \{ \sum_{x \in X} \lambda_x \cdot x \mid \lambda_x \geq 0, \sum\lambda_x = 1, S_\Lambda \in \sf{Sim}(K)\right \} \subseteq \RR^{\#(X)} \]
$|K|$ inherits the subspace topology from $\RR^n$. 
\end{definition}

\begin{remark}
Because we use $|\cdot|$ to denote \emph{geometric realization}, we denote the cardinality of a set $X$ by $\#(X)$.
\end{remark}

\begin{example}
The \textit{standard combinatorial $n$-simplex}, $\Delta^n$ is defined by the set $X = \{0, 1, 2, \dots, n\}$ and $\sf{Sim}(\Delta^n) = \bb{P}(X) \setminus \{\varnothing\}$.
\end{example}

We can also make simplicial complexes out of posets.

\begin{definition}
Let $P$ be a finite poset. The {\it order complex} of $P$, denoted $\scr{K}(P)$, is the simplicial complex whose vertex set is $P$, and set of simplices is
\[
\Sim(\scr{K}(P)) := \{Q \subseteq P \mid Q \text{ is totally ordered}\}
\]
\end{definition}

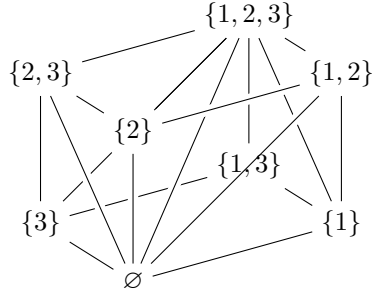
\begin{figure}
    \centering
    \begin{tikzpicture}[scale=2]
    \node (empty) at (0, 0, 0) {$\varnothing$};
    \node (1) at (1, 0, -1) {$\{1\}$};
   
    \node (3) at (-1, 0, -1) {$\{3\}$};
    \node (13) at (0, 0, -2) {$\{1, 3\}$};
    \node (23) at (-1, 1, -1) {$\{2, 3\}$};
    \node (12) at (1, 1, -1) {$\{1, 2\}$};
    \node (123) at (0, 1, -2) {$\{1, 2, 3\}$};
    \draw (123) -- (3);
     \path[fill=white] (0,1,0) circle (0.2); 
    \node (2) at (0, 1, 0) {$\{2\}$};
    \draw (123) -- (23);
    \draw (123) -- (1);
    \draw (123) -- (2);
   
    \draw (13) -- (3);
    \draw[white, line width=4pt] (123) -- (empty);
    \draw (123) -- (empty);
    \draw (123) -- (12);
    \draw (123) -- (13);
    \draw (13) -- (1);
    \draw (13) -- (empty);
    \draw[white, line width=4pt] (12) -- (empty);
    \node at (0, 0, -2) {\colorbox{white}{$\{1, 3\}$}};
    \draw (12) -- (empty);
    \draw[white, line width=4pt] (23) -- (empty);
    \draw (23) -- (empty);
    \draw (23) -- (3);
    \draw (23) -- (2);
    \draw (12) -- (1);
    \draw[white, line width=4pt] (12) -- (2);
    \draw (12) -- (2);
    \draw[white, line width=4pt] (2) -- (empty);
    \draw (2) -- (empty);
    \draw (3) -- (empty);
    \draw (1) -- (empty);
    \end{tikzpicture}
    \caption{The order complex $\scr{K}(\bb{P}(\{1, 2, 3\})$ associated to a power set.}
    \label{fig:my_label}
\end{figure}

The subdivision construction is more commonly applied to simplicial complexes.

\begin{definition}
Suppose $K$ is a simplicial complex. Define $\chi(K) := \Sim(K)$ and equip it with the order $\subseteq$. We call $\chi(K)$ the {\it face poset} of $K$. Define the \textit{barycentric subdivision} of $K$ as
\[
\sd(K) := \scr{K}(\chi(K)).
\]
\end{definition}

\begin{remark}
Using the idea of the face poset, we may redefine subdivision of posets in a similar way to subdivision of simplicial complexes. For any poset $P$, $\sd(P) = \chi(\scr{K}(P))$. This serves to justify our abuse of notation in using $\sd$ to denote both the subdivision of a poset and the subdivision of a simplex.
\end{remark}

For the reader's convenience, we recall the following well-known fact about subdivision without proof.

\begin{proposition}
For any simplicial complex $K$, there exists a homeomorphism $J: |\sd(K)| \to |K|$. Thus,
\[
|\sd(K)| \cong |K|
\]
\end{proposition}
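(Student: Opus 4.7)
The plan is to build $J$ directly from the barycenter construction on each simplex. First, I define $J$ on the vertices of $\sd(K)$: a vertex of $\sd(K)$ is precisely a simplex $\sigma \in \on{Sim}(K)$, and I send it to the barycenter
\[
b_\sigma := \frac{1}{\#\sigma} \sum_{v \in \sigma} v \in |K|.
\]
A simplex of $\sd(K)$ is a chain $\sigma_0 \subsetneq \sigma_1 \subsetneq \cdots \subsetneq \sigma_k$ in $\chi(K)$, and I extend $J$ affinely over the geometric simplex spanned by $b_{\sigma_0}, \ldots, b_{\sigma_k}$. Because each $b_{\sigma_i}$ lies in the geometric simplex of $\sigma_k$ in $|K|$, the image of this extension lies in the geometric simplex of $\sigma_k$, so $J$ lands in $|K|$. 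Since the affine extension is determined by its values on vertices, the definition agrees on common faces of chains, and $J$ is continuous on each simplex; by the usual universal property of the topology on geometric realizations, $J : |\sd(K)| \to |K|$ is continuous.

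Next I would argue that $J$ is surjective by the classical ``sort coefficients'' trick. Given $p \in |K|$, choose any simplex $\tau = \{v_0, \ldots, v_n\}$ with $p = \sum_{i=0}^n \lambda_i v_i$, $\lambda_i \geq 0$, $\sum \lambda_i = 1$. After relabeling so that $\lambda_0 \geq \lambda_1 \geq \cdots \geq \lambda_n \geq 0$, set $\sigma_k := \{v_0, \ldots, v_k\}$ and define
\[
\mu_k := (k+1)(\lambda_k - \lambda_{k+1}) \quad (0 \leq k < n), \qquad \mu_n := (n+1)\lambda_n.
\]
A direct computation shows $\mu_k \geq 0$, $\sum_k \mu_k = 1$, and $\sum_k \mu_k b_{\sigma_k} = \sum_i \lambda_i v_i = p$. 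So $p = J(\sum_k \mu_k [\sigma_k])$ lies in the image.

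For injectivity, I would restrict attention to a single simplex $\tau$ of $K$ and show that the images $J(\Delta_C)$ of the geometric simplices $\Delta_C$ of $\sd(K)$ corresponding to chains $C = (\sigma_0 \subsetneq \cdots \subsetneq \sigma_k = \tau)$ with top element $\tau$ give a decomposition of the geometric simplex of $\tau$ whose interiors are pairwise disjoint: this follows from the uniqueness of the sort order in the surjectivity argument above (the chain $C$ recovered from a point $p$ in the interior of the geometric simplex of $\tau$ is determined by the ordering of the $\lambda_i$). Together with continuity, surjectivity, and the observation that both $|\sd(K)|$ and $|K|$ are compact Hausdorff (as $K$ is finite), $J$ is then a continuous bijection between compact Hausdorff spaces and hence a homeomorphism.

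The main obstacle is the injectivity step: one must be careful that when two different chains $C$, $C'$ produce the same point under $J$, that point actually lies on the shared face of the corresponding geometric simplices of $\sd(K)$, which amounts to verifying the sort procedure is canonical on the interior of each open simplex of $|K|$ and that ties on the boundary correspond exactly to the face relations of $\sd(K)$. Once this combinatorial bookkeeping is in place, everything else is formal.
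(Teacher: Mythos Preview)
The paper does not actually prove this proposition; it is stated explicitly ``without proof'' as a well-known classical fact, so there is no argument in the paper to compare against.

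Your construction is the standard one: send each vertex $\sigma$ of $\sd(K)$ to the barycenter $b_\sigma$ and extend affinely, then invert via the ``sort the barycentric coordinates'' trick. The surjectivity computation is correct as written, and your outline of injectivity is the right idea. The only place that deserves a bit more care is exactly the one you flag: when some of the $\lambda_i$ coincide, the sort order is not unique, but in that case the corresponding $\mu_k$ vanishes, so the two candidate preimages in $|\sd(K)|$ agree on the common face of the chains involved. Once that is checked, compactness and Hausdorffness finish the argument as you say. In short, your proof is correct and is precisely the classical argument one would supply for a result the paper chose to quote without proof.
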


In the sequel we will identify the realizations of simplicial complexes and their subdivisions by this homeomorphism without comment.

\begin{definition}
Given simplicial complexes $K$ and $L$, a \textit{simplicial map} $\phi : K \to L$ is a map between the vertex sets of $K$ and $L$ such that $\phi(\sigma) \in \sf{Sim}(L)$ for any simplex $\sigma \in \sf{Sim}(K)$.
\end{definition}

As in \cite{gonzalez2018}, our definition of equivariant simplicial complexity will require us to make use of simplicial approximations to discretize homotopy-theoretic constructions. We recall some of these constructions here, and elaborate on them in Section~\ref{sec:SC_G}.

\begin{definition}
Given a map $f: |K| \to |L|$, we say that a simplical map $\phi: K \to L$ is a \textit{simplicial approximation} of $f$ if for $x \in |K|$ and $\sigma \in \sf{Sim}(L)$ and $f(x) \in |\sigma|$ implies that $|\phi |(x) \in |\sigma|$.
\end{definition} 

This definition gives us a means of relating simplicial maps to topological maps, but not to each other. For that we turn to the notion of \textit{contiguity} used in \cite[Definition 2.5]{gonzalez2018}

\begin{definition}
Given two simplicial maps $\phi,\psi: K \to L$, we say that $\phi$ and $\psi$ are \textit{$1$-contiguous} if $\phi(\sigma)\cup\psi(\sigma)$ is a simplex of $L$ for any simplex $\sigma$ of $K$.

And that, given a positive integer $l$, $\phi$ and $\psi$ are $l$-contiguous if there exists a sequence of maps $\phi = \phi_0,\phi_1, \dots, \phi_l = \psi: K \to L$ such that $\phi_i$ and $\phi_{i+1}$ are $1$-contiguous.
\end{definition}

In the original definition of simplicial complexity in \cite{gonzalez2018}, heavy use is made of the following properties of simplicial approximations, which we recapitulate from \cite{gonzalez2018}. We will use both these properties and their equivariant analogues heavily in the sequel.

\begin{theorem} \label{thm:contiguityfacts}
The following facts hold regarding simplicial approximations \begin{itemize}
    \item Two approximations of the same continuous map are $1$-contiguous, and their contiguity class is unique.
    \item Simplicial maps in the same contiguity class have homotopic realizations.
    \item Given homotopic maps $f, g: |K| \to |L|$, there exists a non-negative integer $n_0$ such that for any $n \geq n_0$, any pair of approximations $\phi, \psi: \sd^n(K) \to L$ of $f$ and $g$ respectively are $c$-contiguous for some $c \geq 0$.
\end{itemize}
\end{theorem}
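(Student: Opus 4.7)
The plan is to deduce all three bullets from the classical Simplicial Approximation Theorem (every continuous map $|K|\to |L|$ admits a simplicial approximation after sufficiently many barycentric subdivisions of $K$), together with direct straight-line homotopy arguments on geometric realizations.

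For the first bullet, I would take two simplicial approximations $\phi,\psi:K\to L$ of a common map $f$ and fix a simplex $\sigma$ of $K$. For a point $x$ in the interior of $|\sigma|$, let $\tau$ be the unique smallest simplex of $L$ whose interior contains $f(x)$. The defining property of a simplicial approximation forces both $|\phi|(x)$ and $|\psi|(x)$ to lie in $|\tau|$, and since $\phi,\psi$ are simplicial the entire sets $\phi(\sigma)$ and $\psi(\sigma)$ must sit inside $\tau$. Hence $\phi(\sigma)\cup\psi(\sigma)\subseteq \tau$ is itself a simplex, which is precisely $1$-contiguity. Uniqueness of the contiguity class follows because any third approximation is $1$-contiguous to each of $\phi,\psi$ by the same argument.

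For the second bullet, given $1$-contiguous $\phi,\psi:K\to L$, I would define the straight-line homotopy $H:|K|\times I\to |L|$ by $H(x,t)=(1-t)|\phi|(x)+t|\psi|(x)$. For $x\in|\sigma|$, both endpoints lie in $|\phi(\sigma)\cup\psi(\sigma)|$, which is a simplex of $L$ by hypothesis; hence the whole segment lies in $|L|$ and $H$ is well-defined and continuous. Concatenating such homotopies establishes the result for $l$-contiguous maps in general.

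The third bullet is the main obstacle. Suppose $f,g:|K|\to|L|$ are joined by a homotopy $H:|K|\times I\to|L|$. I would equip $|K|\times I$ with a triangulation that restricts to the given simplicial structure on $|K|\times\{0,1\}$, and apply the Simplicial Approximation Theorem to $H$ to obtain, for all sufficiently large $n\geq n_0$, a simplicial approximation $\Phi$ of $H$ on a triangulation whose top and bottom ``floors'' are copies of $\sd^n(K)$. Traversing the prism $|\sigma|\times I$ for each simplex $\sigma$ of $\sd^n(K)$ and reading off the vertex images of $\Phi$ shows that the restrictions $\Phi|_{K\times\{0\}}$ and $\Phi|_{K\times\{1\}}$ are connected by a uniformly bounded number of $1$-contiguities, the bound depending only on the height of the prism triangulation and not on $\sigma$. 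Finally, invoking the first bullet lets us compare any other pair of approximations $\phi,\psi$ of $f,g$ on $\sd^n(K)$ to these two canonical ones at the cost of at most two additional $1$-contiguity steps. The delicate point — and the real technical work from \cite{gonzalez2018} — is arranging the prism triangulation so that the number of $1$-contiguity steps counts uniformly in $\sigma$; once that is in place the bound $c$ emerges with no further work.
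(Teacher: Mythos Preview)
The paper does not actually prove this theorem: it is stated as a recapitulation of facts from \cite{gonzalez2018} and is used without further justification. So there is no ``paper's own proof'' to compare against beyond the citation.

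Your sketch is the standard classical argument and matches what one finds in Spanier or in Gonz\`alez's treatment. The first two bullets are handled correctly: the carrier argument for $1$-contiguity of two approximations is exactly right (the key observation that $|\phi|(x)$ lies in the open simplex $\phi(\sigma)^\circ$ when $x\in\sigma^\circ$, forcing $\phi(\sigma)\subseteq\tau$), and the straight-line homotopy for contiguous maps is textbook. For the third bullet, your outline is correct in spirit, and you are right to flag the prism-triangulation step as the delicate part. The one point worth sharpening: when you barycentrically subdivide the triangulated prism $|K|\times I$ enough times to approximate $H$, the top and bottom faces are no longer literally copies of $\sd^n(K)$, so some care is needed either to use a relative subdivision scheme that leaves $|K|\times\{0,1\}$ as $\sd^n(K)$, or to compose with further approximations of the identity afterward. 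You gesture at this but do not resolve it; since the paper itself simply cites \cite{gonzalez2018} for the whole statement, your level of detail is already more than what the paper provides.
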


\subsection{Group Actions} 

We begin by recalling some facts about group actions and continuous group actions on topological spaces. 

\begin{remark}
For the purposes of the rest of this paper, we will assume that any group $G$ is a finite group (equipped with the discrete topology when applicable).
\end{remark}

Suppose that a group $G$ acts continuously on a space $X$. For any $g \in G$ and $x \in X$, we denote $g \cdot x$ the action of $g$ on $x$. We denote $G_x$ the {\it stabilizer group} or {\it isotropy group} of $x$. We denote $\cal{O}(x)$ the orbit of $x$ and $X / G$ the space of orbits of $X$ (endowed with the quotient topology). We write $X^G$ for the $G$-fixed point set of $X$, which we equip with the subspace topology. If $X$ and $Y$ are $G$-spaces, we let $G$ act diagonally on the product space $X \times Y$. If $V \subseteq X$ is any subset, denote $G \cdot V := \{g \cdot v \mid g \in G, v \in V\}$. One may easily see that $G \cdot V$ is always an invariant subset of $X$.

\begin{definition}
Suppose $X$ and $Y$ are $G$-spaces, $f, g: X \to Y$ are continuous, equivariant maps. We can consider $X \times I$ as a $G$-space by letting $G$ act diagonally on $X \times I$ and trivially on the unit interval $I$. We say that a continuous map $H: X \times I \to Y$ is a {\it $G$-homotopy} from $f$ to $g$ if $H$ is a homotopy from $f$ to $g$ and $H$ is equivariant. If there exists a $G$-homotopy between $f$ and $g$, we say that $f$ is {\it $G$-homotopic} to $g$ and write $f \simeq_G g$.
\end{definition}

Now we must revisit the notion of subdivision and extend it to the setting of $G$-spaces.

\begin{definition}
For a finite $T_0$ $G$-space $X$ with a subdivision $\sd^n(X)$, we define a \textit{G-action on the barycentric subdivision} by $\{g\cdot p_0 < g\cdot p_1 \dots <g\cdot p_n\} = g\cdot\{p_0<p_1< \dots <p_n\}$. 
\end{definition}

\begin{lemma}\label{lem:EquivarT}
The canonical map $\tau: \sd^n(X) \to X$ defined by $\tau: \{p_0<p_1<\dots<p_n\} \mapsto p_n$ is $G$-equivariant.
\end{lemma}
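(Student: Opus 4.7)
The plan is to proceed by induction on $n$, reducing everything to the definition of the $G$-action on a single subdivision. For the base case $n = 1$, one simply unpacks the definitions: given any chain $C = \{p_0 < p_1 < \cdots < p_k\}$ in $\sd(X)$ and any $g \in G$, the definition of the $G$-action on $\sd(X)$ yields $g \cdot C = \{g \cdot p_0 < g \cdot p_1 < \cdots < g \cdot p_k\}$, and hence $\tau(g \cdot C) = g \cdot p_k = g \cdot \tau(C)$, so the last-vertex map $\tau_X \colon \sd(X) \to X$ is $G$-equivariant.

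For $n > 1$, one views $\tau \colon \sd^n(X) \to X$ as the composite
\[
\sd^n(X) \xrightarrow{\tau_{\sd^{n-1}(X)}} \sd^{n-1}(X) \xrightarrow{\tau_{\sd^{n-2}(X)}} \cdots \xrightarrow{\tau_{\sd(X)}} \sd(X) \xrightarrow{\tau_X} X
\]
of last-vertex maps on successive subdivisions. By induction, each $\sd^k(X)$ inherits a $G$-action via the same chain formula applied to $\sd^{k-1}(X)$, and the base case (applied with $\sd^{k-1}(X)$ in place of $X$) shows that each individual last-vertex map in the composite is equivariant. Since a composition of equivariant maps is itself equivariant, the full composite $\tau \colon \sd^n(X) \to X$ is as well.

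There is no substantive obstacle here; the lemma is essentially a direct consequence of how the $G$-action on subdivisions was defined, together with a routine induction to handle the iterated case. The only mild subtlety worth flagging is the tacit claim that the formula $g \cdot \{p_0 < \cdots < p_k\} = \{g\cdot p_0 < \cdots < g \cdot p_k\}$ genuinely defines a poset automorphism of $\sd(X)$, but this is immediate from the fact that the $G$-action on $X$ consists of continuous maps and hence of order isomorphisms, so strict chains are sent to strict chains of the same length.
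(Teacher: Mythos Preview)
Your proposal is correct. The paper actually states this lemma without proof, treating it as an immediate consequence of the definition of the $G$-action on $\sd(X)$; your argument by induction on $n$, reducing to the single-subdivision case and composing last-vertex maps, is exactly the routine verification one would supply and there is nothing to correct.
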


\subsection{Topological Complexity}
We now recall some key facts about topological complexity, mostly following \cite{farber_2003} and \cite{farberbook}.
If $X$ is a topological space, we define $X^I$ to be the set of all paths in $X$, called the \textit{path space}, and endow it with the compact-open topology. The \textit{path fibration} is the mapping $p:X^I\rightarrow X\times X$ from the path space to $X \times X$, That is
\begin{align*}
    p: X^I &\to X \times X \\
    \alpha &\mapsto (\alpha(0), \alpha(1))
\end{align*}
Given any two points $a,b\in X$, we want to choose a path $\alpha$ between $a$ and $b$. We do this using the idea of a motion planning.
\begin{definition}
Given any two points $a,b\in X$, a \textit{motion planning} is a continuous map $s: X \times X \to X^I$ such that $p \circ s = \sf{id}_{X^I}$
\end{definition}

\begin{remark} \label{rem: tc=1 means contractible}
For an arbitrary topological space $X$, it is rarely the case that a continuous motion planning can be defined on the entire space. In fact, if $s:X\times X\rightarrow X^I$ is a continuous motion planning for $X$, then $X$ is contractible. 
\end{remark}

In spite of this, we can consider motion plannings on smaller pieces of $X \times X$.

\begin{definition}
Suppose $U \subseteq X \times X$ is any set. If there exists a continuous map $s$ such that the following diagram commutes:
\[\begin{tikzcd}
	& {X^I} \\
	U & {X \times X}
	\arrow["s", from=2-1, to=1-2]
	\arrow["\pi", from=1-2, to=2-2]
	\arrow[hook, from=2-1, to=2-2]
\end{tikzcd}\]
then we say that $U$ {\it admits a continuous motion planning}. An open set $U \subseteq X \times X$ that admits a continuous motion planning is called a \textit{sectional categorical} set.
\end{definition}

\textit{Topological Complexity}, as defined by Farber in \cite{farber_2003} is a homotopy invariant that formalizes the idea of finding a motion planning for \textit{any} arbitrary space $X$ by considering separate continuous motion plannings on open sets of $X \times X$.

\begin{definition}\label{def:TC}
Let $X$ be a topological space. The topological complexity of $X$, denoted $\TC{(X)}$, is the minimal number $k$ such that $X\times X$ can be covered by open subsets $U_1, U_2, ..., U_k\subseteq X\times X$ where each $U_i$ admits a continuous motion planning. 
\end{definition}

As discussed in Remark \ref{rem: tc=1 means contractible}, topological complexity is closely related to contractiblity. Thus it should not be surprising that topological complexity is closely related to the \textit{Lusternik-Schnirelmann category of $X$}. 

\begin{definition}
 Let $X$ be a topological space. A set $U \subseteq X$ is called {\it nullhomotopic} if the inclusion $i_U: U \to X$ is homotopic to a constant map $U \to X$. The \textit{Lusternik-Schnirelmann category} of $X$, denoted $\LS{(X)}$, is the minimal number $k$ such that $X$ can be covered by nullhomotopic open sets $U_1, U_2, ..., U_k\subseteq X$.
\end{definition}

The Lusternik-Schnirelmann category is valuable in that it provides several bounds on the topological complexity of a space.  
 
\begin{proposition}\label{prop:TC_LS_Bounds}$\;$\\
\begin{enumerate}
    \item If $X$ and $Y$ are paracompact and path-connected topological spaces, then \[\LS{(X\times Y)}<\LS{(X)}+\LS{(Y)}\]
    \item If $X$ is a path-connected topological space, then \[\LS{(X)}\leq \TC{(X)}\leq \LS{(X\times X)}\]
\end{enumerate}

\end{proposition}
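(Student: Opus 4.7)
The plan is to handle the two parts of the proposition separately, since each is a classical fact about Lusternik--Schnirelmann category. Both proofs are pretty standard, so the main work is in organizing them cleanly; the genuine obstacle is the subadditivity bound in part (1), where naive products of covers give only a multiplicative bound.

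For part (1), let $\{U_1,\dots,U_m\}$ be a cover of $X$ by nullhomotopic open sets with $m=\LS(X)$ and let $\{V_1,\dots,V_n\}$ be an analogous cover of $Y$ with $n=\LS(Y)$. The naive attempt is to cover $X\times Y$ by the $mn$ products $U_i\times V_j$, each of which is nullhomotopic; but this only proves $\LS(X\times Y)\le mn$. To sharpen this to $\LS(X\times Y)\le m+n-1$, I will use paracompactness to pick partitions of unity $\{u_i\}$ and $\{v_j\}$ subordinate to the two covers. Grouping indices by the level set $k = i+j$ for $k=2,\dots,m+n$, I would define
\[
W_k \;=\; \bigcup_{i+j=k}\Bigl(\{x: u_i(x) > \max_{i'\neq i, i'+j\le k}u_{i'}(x)\}\times\{y: v_j(y) > \max_{j'\neq j, i+j'\le k}v_{j'}(y)\}\Bigr),
\]
the standard Ostrand-style refinement which ensures the constituent products in each $W_k$ are pairwise disjoint. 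Each $W_k$ is then open, a disjoint union of nullhomotopic pieces (hence nullhomotopic), and the $W_k$ cover $X\times Y$. This gives $m+n-1$ nullhomotopic open sets, which is exactly the strict inequality asserted. This is the technical heart of the argument.

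For the left inequality of part (2), suppose $\{U_1,\dots,U_k\}$ is an open cover of $X\times X$ by sectional categorical sets with sections $s_i:U_i\to X^I$, and fix a basepoint $x_0\in X$. Define $W_i := \{x\in X : (x,x_0)\in U_i\}$; each $W_i$ is open and the $W_i$ cover $X$ because $(x,x_0)\in U_i$ for some $i$. Then $H_i: W_i\times I\to X$ given by $H_i(x,t) = s_i(x,x_0)(t)$ is a homotopy from the inclusion $W_i\hookrightarrow X$ to the constant map at $x_0$, witnessing nullhomotopy of $W_i$. Hence $\LS(X)\le k = \TC(X)$.

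For the right inequality of part (2), take a cover $\{U_1,\dots,U_\ell\}$ of $X\times X$ by nullhomotopic open sets with $\ell=\LS(X\times X)$, so each $U_i$ admits a homotopy $H^i:U_i\times I\to X\times X$ from the inclusion to a constant map at some $(a_i,b_i)$. Writing $H^i=(H^i_1,H^i_2)$, for $(x,y)\in U_i$ the slices $t\mapsto H^i_1(x,y,t)$ and $t\mapsto H^i_2(x,y,t)$ give continuous paths from $x$ to $a_i$ and from $y$ to $b_i$. Using path-connectedness of $X$, pick any fixed path $\gamma_i$ from $a_i$ to $b_i$; concatenating the path from $x$ to $a_i$, then $\gamma_i$, then the reverse of the path from $y$ to $b_i$ produces a continuous section $s_i:U_i\to X^I$ of the path fibration. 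Hence each $U_i$ is a sectional categorical set, giving $\TC(X)\le\ell=\LS(X\times X)$.
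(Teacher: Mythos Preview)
The paper does not argue this proposition at all: its proof is simply two citations, to James for part~(1) and to Farber for part~(2). Your proposal instead writes out the classical arguments directly, so the comparison is between your execution and the proofs in those references.

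Your treatment of part~(2) is correct and is essentially Farber's argument: slicing along $x\mapsto(x,x_0)$ and using the section to contract each $W_i$ gives the lower bound, and concatenating the two components of a nullhomotopy of $U_i\subseteq X\times X$ with a fixed path $\gamma_i$ from $a_i$ to $b_i$ gives the upper bound.

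For part~(1) your overall strategy---partitions of unity plus a diagonal regrouping of the products $U_i\times V_j$---is exactly the right one, but the displayed formula for $W_k$ does not do what you claim. With $i+j=k$ fixed, your constraint ``$i'\neq i,\ i'+j\le k$'' reduces to $i'<i$; so for $k=2$ the constraint is vacuous and your $W_2$ is all of $X\times Y$, which is not contained in $U_1\times V_1$ and need not be nullhomotopic. The pieces within a fixed $W_k$ are also not pairwise disjoint: for $k=3$ the $(1,2)$-piece is $\{(x,y):v_2(y)>v_1(y)\}$ and the $(2,1)$-piece is $\{(x,y):u_2(x)>u_1(x)\}$, and these overlap. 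The standard construction compares the products $u_i(x)v_j(y)$ (or uses an auxiliary total order on index pairs) rather than the two factors separately against smaller indices; getting the combinatorics right so that the $W_k$ are simultaneously open, disjoint-within-level, nullhomotopic, and covering is exactly the delicate point of James' proof. Once the correct refinement is substituted, the rest of your outline is fine.
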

\begin{proof}
The first statement is proved in \cite[Proposition 2.3]{JAMES1978331}, and the second is proved in \cite[Theorem 5]{farber_2003}.
\end{proof}

\section{Equivariant LS-Category and Equivariant Topological Complexity}\label{sec:LS_G and TC_G}

In their 2012 paper \cite{colman_grant_2012}, Colman and Grant define an equivariant version of Farber's topological complexity, including its properties and product bounds. The basis of their definition lies in the sectional category.

\begin{definition}
The \textit{sectional category} of a map $p:E\rightarrow B$, denoted $\secat{(p)}$, is the least integer $k$ such that $B$ may be covered by $k$ open sets, $U_1, U_2, ..., U_k\subseteq B$, on each of which there exists a map $s: U_i\rightarrow E$ such that $p\circ s: U_i\rightarrow B$ is homotopic to the inclusion $i_{U_{i}}:U_i\rightarrow B$. If no such integer exists, we let $\secat{(p)}=\infty$.
\end{definition}

\begin{remark}
If the map $p: E \to B$ is a fibration, one may require that $p\circ s = i_{U_i}$ instead of requiring that $p \circ s \simeq_G i_{U_i}$, and the value of $\secat_G(p)$ will be unchanged. In particular, the path fibration $\pi: X^I \to X \times X$ and the combinatorial path fibration $q_m: P^{J_m} \to P \times P$ are fibrations.
\end{remark}

Equipped with this definition, we can rephrase our original definition for topological complexity (Definition~\ref{def:TC}) in terms of the sectional category.
Recall that the path fibration is a mapping from $p:X^I\rightarrow X\times X$ from a path to its endpoints. In addition, we can recall that $\TC{(X)}$ is the minimal number $k$ such that $X\times X$ can be covered by open subsets, each of which admit a continuous motion planning. Therefore, it is clear that the topological complexity of a space can be redefined as the sectional category of the path fibration. \[\TC{(X)}=\secat{(\pi:X^I\rightarrow X\times X)}\]

In \cite{colman_grant_2012}, Colman and Grant use this rephrased definition to develop equivariant topological complexity. They define equivariant versions of the Lusternik-Schnirelmann category and sectional category, and then apply these to the equivariant path fibration.

\begin{definition}
The \textit{equivariant category of a G-space}, $\LS_G(X)$ is the least integer $k$ such that $X$ may be covered by $k$ open, invariant sets, each of which has an inclusion function $i_{U_i}:U_i\rightarrow X$ which is G-homotopic to to a map with values in a single orbit. We call such subsets \textit{G-categorical}
\end{definition}

In a loose sense the above definition allows us to determine how equivariantly simple a space is. Next we introduce a notion of that simplicity with respect to an equivariant map.

\begin{definition}
The \textit{equivariant sectional category} of a $G$-map $p: E \to B$, denoted $\secat_G(p)$ is the least $k$ in the integers such that $B$ is covered by $k$ open invariant sets such that for each $U_i$ there exists a $G$-map $s_i: U_i \to E$ where $p \circ s_i = i_{U_i}$.
\end{definition}

\begin{remark}
We can observe that if $G$ acts trivially on $X$, then $\LS_G(X)=\LS(X)$ and $\secat_G(p)=\secat(p)$, as expected. 
\end{remark}
 
\begin{remark}
If $X$ is a G-space, $\pi: X^I\rightarrow X\times X$ is a G-fibration with respect to the actions. We can verify that the free path fibration $\pi$ is $G$-equivariant. Let $g \in G$ and let $\alpha \in X^I$. Since $X$ is a $G$-space, 
$$
\pi(g(\alpha(t))) = (g(\alpha(0)), g(\alpha(1))) = g(\alpha(0), \alpha(1)) = g(\pi(\alpha(t))).
$$
\end{remark}
Using this path fibration, Colman and Grant define an equivariant version of topological complexity:

\begin{definition}
The \textit{equivariant topological complexity} is defined as the equivariant sectional category of the free path fibration $\pi:X^I\rightarrow X\times X$. I.e., \[\TC_G(X) = \secat_G(\pi)\]
\end{definition}

\subsection{Properties}

Colman and Grant prove several bounds on the product and properties of $\TC_G(X)$ which prove useful to our later work. While Colman and Grant only consider $G$-spaces which are Hausdorff, many of their arguments carry over verbatim to the finite $T_0$ setting. 

\begin{definition}
We say a $G$-space $X$ is {\it $G$-connected} if the fixed point set $X^H$ is path-connected for every closed subgroup $H \subseteq G$. Since we equip $G$ with the discrete topology in this paper, then $X$ is $G$-connected if $X^H$ is path-connected for {\it any} subgroup $H \subseteq G$. 
\end{definition}

It turns out that $G$-connectedness is sufficient to generalize some of the bounds on $\TC(X)$ to the equivariant setting.

\begin{lemma}\label{prop:GConnectedBound}
If $X$ is $G$-connected, then $\TC_G(X)\leq \LS_G(X\times X)$ 
\end{lemma}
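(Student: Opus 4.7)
The plan is to adapt the classical argument that $\TC(X) \leq \LS(X \times X)$ from \cite[Theorem 5]{farber_2003} to the equivariant setting, with the $G$-connectedness hypothesis being used at exactly one step: extending a single path between two points of an orbit to a $G$-equivariant family of paths over the whole orbit. Since the preceding remark observes that for a $G$-fibration such as $\pi : X^I \to X \times X$ we may require literal equality $\pi \circ s = i_{U}$ rather than a $G$-homotopy, it suffices to show that every $G$-categorical open $U \subseteq X \times X$ admits a $G$-equivariant continuous map $s : U \to X^I$ with $\pi \circ s = i_U$; applying this to the open sets of a minimal $G$-categorical cover of $X \times X$ then gives the inequality.

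Fix such a $U$ together with a $G$-homotopy $H = (H_1, H_2): U \times I \to X \times X$ from $i_U$ to a $G$-equivariant map $c : U \to X \times X$ whose image lies in a single orbit $\cal{O} = G \cdot (x_0, y_0)$. Let $K := G_{(x_0,y_0)} = G_{x_0} \cap G_{y_0}$, so that both $x_0$ and $y_0$ lie in the fixed point set $X^K$. By $G$-connectedness, $X^K$ is path-connected, so I may choose a path $\alpha_0 : I \to X^K$ from $x_0$ to $y_0$. I then define a ``middle section'' $\sigma : \cal{O} \to X^I$ by $\sigma(g \cdot (x_0, y_0)) := g \cdot \alpha_0$. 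This is well-defined because $g(x_0,y_0) = g'(x_0,y_0)$ forces $g^{-1}g' \in K$, which fixes $\alpha_0$ pointwise; it is manifestly $G$-equivariant; and it satisfies $\pi \circ \sigma = \on{id}_{\cal{O}}$.

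Finally, I would build $s : U \to X^I$ by concatenating three $G$-equivariant pieces: on $[0, 1/3]$ the path $t \mapsto H_1(a,b,3t)$ from $a$ to the first coordinate of $c(a,b)$, on $[1/3, 2/3]$ the reparametrised middle path $t \mapsto \sigma(c(a,b))(3t-1)$, and on $[2/3, 1]$ the reversed second-coordinate path $t \mapsto H_2(a,b, 3 - 3t)$. The three pieces match at $t = 1/3$ and $t = 2/3$ because $H(a,b,1) = c(a,b)$; the result is continuous and $G$-equivariant because each of $H$, $c$, and $\sigma$ is; and $\pi \circ s = i_U$ follows by evaluating at $t = 0$ and $t = 1$.

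The main obstacle is the middle step, namely promoting the single connecting path $\alpha_0$ to an equivariant family over $\cal{O}$. The hypothesis of $G$-connectedness is indispensable here: without ensuring $\alpha_0 \subseteq X^K$, the formula $g \cdot (x_0, y_0) \mapsto g \cdot \alpha_0$ depends on the coset representative and fails to be well-defined. A secondary point, especially relevant in the finite $T_0$ setting in which the paper works, is verifying that $\sigma$ is continuous with respect to the subspace topology $\cal{O}$ inherits from $X \times X$; this can be checked directly from the description of the compact-open topology on $X^I$ as the topology induced by the pointwise order, using that the $G$-action on $X$ preserves the order.
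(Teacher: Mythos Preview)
Your argument is correct and is precisely the proof of \cite[Prop.~5.6]{colman_grant_2012}, which the paper simply cites without reproducing. The only point worth noting is that continuity of $\sigma$ is automatic in both the Hausdorff and the finite $T_0$ settings because the orbit $\cal{O}\subseteq X\times X$ carries the discrete topology (cf.\ Lemma~\ref{lem:TopOnOrbit}), so there is no need for the order-theoretic check you sketch at the end.
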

\begin{proof}
This is \cite[Prop 5.6]{colman_grant_2012}.
\end{proof}

Now we turn to an equivariant generalization of part (1) of Proposition~\ref{prop:TC_LS_Bounds}.

\begin{lemma}
If $X$ is $G$-connected and $X^G \neq \varnothing$, then $\LS_G(X) \leq \TC_G(X)$.
\end{lemma}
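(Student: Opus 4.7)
The plan is to adapt the standard non-equivariant argument that $\LS(X) \leq \TC(X)$ to the equivariant setting, where the role of an arbitrary basepoint is played by a $G$-fixed point. The hypothesis $X^G \neq \varnothing$ is precisely what ensures such a basepoint exists.

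First, I would fix $x_0 \in X^G$. Suppose $\TC_G(X) = k$, so there is a $G$-invariant open cover $U_1, \ldots, U_k$ of $X \times X$ together with equivariant sections $s_i \colon U_i \to X^I$ of the path fibration $\pi$. For each $i$, define the slice
\[ V_i := \{x \in X \mid (x, x_0) \in U_i\}. \]
I would then verify in sequence: (i) each $V_i$ is open, being the preimage of $U_i$ under the continuous map $x \mapsto (x, x_0)$; (ii) each $V_i$ is $G$-invariant, since for $x \in V_i$ and $g \in G$ we have $(g \cdot x, x_0) = (g \cdot x, g \cdot x_0) = g \cdot (x, x_0) \in U_i$; and (iii) the $V_i$ cover $X$, since $(x, x_0) \in X \times X$ lies in some $U_i$ for every $x \in X$.

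Next, I would use the sections $s_i$ to exhibit each $V_i$ as $G$-categorical. Define $H_i \colon V_i \times I \to X$ by $H_i(x, t) := s_i(x, x_0)(t)$. The fact that $s_i$ is a section of $\pi$ gives $H_i(x, 0) = x$ and $H_i(x, 1) = x_0$, so $H_i$ is a homotopy from the inclusion $V_i \hookrightarrow X$ to the constant map at $x_0$. Equivariance of $H_i$ (with trivial action on $I$) follows from the computation
\[ s_i(g \cdot x, x_0) = s_i(g \cdot x, g \cdot x_0) = s_i(g \cdot (x, x_0)) = g \cdot s_i(x, x_0), \]
which uses $x_0 \in X^G$ and the equivariance of $s_i$. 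Since $x_0$ is $G$-fixed, its $G$-orbit is the singleton $\{x_0\}$, so $H_i$ is a $G$-homotopy to a map with values in a single orbit, making $V_i$ a $G$-categorical set. Together the $V_i$ therefore witness $\LS_G(X) \leq k = \TC_G(X)$.

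The only subtle point — and thus the main thing I would write out carefully — is the equivariance of $H_i$, since it is exactly the interplay between $x_0 \in X^G$ and the equivariance of $s_i$ that allows the slice construction to produce an \emph{equivariant} nullhomotopy rather than merely a classical one. I do not expect $G$-connectedness to enter the core of this particular argument; under the definition of $\LS_G$ given above, the fixed-point hypothesis alone does the essential work, and $G$-connectedness appears to be a standing assumption carried over from the surrounding discussion.
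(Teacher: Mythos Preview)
Your argument is correct. The paper itself does not supply a proof here; it simply cites \cite[Prop.~5.7]{colman_grant_2012}, so there is no in-paper argument to compare against. What you have written is essentially the standard slice argument that Colman and Grant use: restrict each $U_i$ along the $G$-equivariant embedding $x \mapsto (x, x_0)$ and read off a $G$-nullhomotopy from the equivariant section. Your observation that $G$-connectedness plays no role in this particular inequality is also accurate; the hypothesis $X^G \neq \varnothing$ is the only one your argument consumes, and $G$-connectedness is indeed a standing assumption inherited from the surrounding results rather than something this bound requires.
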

\begin{proof}
This follows from \cite[Prop 5.7]{colman_grant_2012}.
\end{proof}

The following proposition proves useful in fixing the lengths of paths that we map to by a particular equivariant sectional category called an equivariant combinatorial motion planning in Proposition \ref{prop:CCpathlength}.

\begin{proposition}\label{prop:PathLength}
Any two points $x, y$ in a path-connected finite space $X$ may be connected using a path $\alpha: J^{n} \to X$ where $n = \#(X)$.

\begin{proof}
  We proceed via induction on $\#(X)$. If $\#(X) = 1$, then $X$ is connected. If $\#(X) = 2$, then for $x \le y \in X$ there exists a path $\gamma: J^2 \rightarrow X$ defined by 
  $$
  \gamma(j) = \begin{cases}
  x &\text{ if } j = 0,
  \\
  y &\text{ if } j = 1,2.
  \end{cases}
  $$
  
  Suppose that the proposition holds for $\#(X) \leq n$ and that $\#(X) = n+1$. Let $x, y \in X$. Since $X$ is path-connected, there exists some $z \in X$ such that $z$ is related to $y$ and there is a path $\beta: J^n \rightarrow X$ such that $\beta(0) = x$ and $\beta(n) = z$. We can define a path
  $$
  \alpha(j) = \begin{cases}
  \beta(j) &\text{ if } j < n,
  \\
  z &\text{ if } j = n \text{ and either } \beta(n-1) \le z \ge y \text{ or } \beta(n-1) \ge z \le y,
  \\
  y &\text{ if } j = n \text{ and either } \beta(n-1) \le z \le y \text{ or } \beta(n-1) \ge z \ge y,
  \\
  y &\text{ if } j = n+1
  \end{cases}
  $$
  that maps $J^{n+1}$ to $X$.

  
  
\end{proof}
\end{proposition}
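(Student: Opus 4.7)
The plan is to reduce the construction to extracting a minimal comparability chain from $x$ to $y$ in the poset $X$, converting this chain into a zig-zag indexed by some $J_k$, and then padding the result at its end with the constant value $y$ to reach length exactly $n$. This sidesteps induction on $\#(X)$ in favor of a direct construction.

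Since $X$ is path-connected, the Barmak-style equivalence between topological and combinatorial paths cited earlier yields at least one combinatorial path from $x$ to $y$, which in turn gives a \emph{comparability chain} $x = x_0, x_1, \dots, x_k = y$ in which each consecutive pair $x_i, x_{i+1}$ is comparable in the poset $X$. I would fix such a chain of minimal $k$. Minimality forces the $x_i$ to be distinct, so $k \leq n-1$; and it forces the relations to alternate strictly in direction, for if $x_{i-1} \leq x_i \leq x_{i+1}$ (or the reversed inequality) held for some $i$, then by transitivity $x_{i-1}$ and $x_{i+1}$ would be comparable, allowing $x_i$ to be deleted from the chain and contradicting minimality.

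Next I would convert the resulting alternating chain into a monotone map $J_m \to X$. When the chain begins with $x_0 \leq x_1$, setting $\alpha(i) := x_i$ gives a map $\alpha : J_k \to X$ directly satisfying the zig-zag condition $\alpha(2j) \leq \alpha(2j+1) \geq \alpha(2j+2)$ coming from the order on $J_k$. When instead the chain begins with $x_0 \geq x_1$, a single prepended constant step $\alpha(0) = \alpha(1) = x_0$, together with $\alpha(i+1) := x_i$ for $i \geq 1$, yields $\alpha : J_{k+1} \to X$. In either case the resulting combinatorial path has length at most $k+1 \leq n$ and ends at $y$. To reach length exactly $n$, I would pad the path at its end with the constant value $y$; because $y \leq y$ and $y \geq y$, constant padding preserves the zig-zag condition regardless of the parity at the seam.

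The main obstacle is the parity bookkeeping: one must verify in each sub-case, depending on whether a dummy step has been prepended and on the parity of $k$, that both the prepended step and the trailing run of $y$'s are consistent with the poset structure on $J_n$. These verifications reduce to a handful of trivial inequalities at the seams and require no homotopy-theoretic input beyond the cited equivalence of topological and combinatorial path-connectedness for finite $T_0$ spaces.
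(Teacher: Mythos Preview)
Your proposal is correct and takes a genuinely different route from the paper. The paper argues by induction on $\#(X)$: assuming the result for spaces of size at most $n$, it locates a point $z$ comparable to $y$ together with a path $\beta: J_n \to X$ from $x$ to $z$, and then extends $\beta$ by one step to reach $y$ via a case analysis on the relative orders of $\beta(n-1)$, $z$, and $y$. You instead extract a minimal comparability chain directly, using minimality to force distinctness (which yields the length bound $k+1 \leq n$) and strict alternation of direction (which is exactly the zig-zag condition needed for a monotone map out of $J_m$), and then pad with constants. Your approach is more self-contained and makes the length bound transparent, while also sidestepping the somewhat delicate justification the inductive step needs for why $z$ and $\beta$ exist inside the larger space; the trade-off is that you carry the parity bookkeeping at both the start and the end of the chain, whereas the paper concentrates it into a single case split at the final step.
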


\subsection{Lusternik-Schnirelmann Category of Finite Spaces}

We now turn to an equivariant generalization of part (1) of Proposition~\ref{prop:TC_LS_Bounds}.

\begin{lemma}\label{lem:TopOnOrbit}
Suppose $G$ is a finite group. If $X$ is a $T_0$ $G$-space, and $x \in X$, then equipping $\cal{O}(x)$ with the subspace topology yields the discrete topology on $\cal{O}(x)$.
\end{lemma}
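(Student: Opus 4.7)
The plan is to exploit the equivalence between finite $T_0$ spaces and posets, applied to the finite set $\cal{O}(x)$ (which is finite since $G$ is finite), together with the fact that $G$ acts transitively on $\cal{O}(x)$ by homeomorphisms.

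First I would observe that the subspace topology on $\cal{O}(x)$ inherits the $T_0$ property: given distinct $y, z \in \cal{O}(x)$ and an open $U \subseteq X$ containing exactly one of them, the set $U \cap \cal{O}(x)$ is open in $\cal{O}(x)$ and still separates $y$ from $z$. Since $\cal{O}(x)$ is finite, it is therefore a finite $T_0$ space, and by the correspondence recalled earlier in Section~\ref{sec:Prelims} it is determined by a poset structure on the underlying set. The action of each $g \in G$ is a homeomorphism of $X$, restricts to a homeomorphism of $\cal{O}(x)$, and thus corresponds to an order-preserving bijection of the associated poset.

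Next I would rule out nontrivial order relations by a pigeonhole-style argument. Suppose, for contradiction, that there exist $y, z \in \cal{O}(x)$ with $y < z$ strictly. By transitivity of the $G$-action on $\cal{O}(x)$ there exists $g \in G$ with $g \cdot y = z$. Since the automorphism induced by $g$ is strictly order-preserving, applying it to $y < z$ yields $z = g \cdot y < g \cdot z$, and iterating gives an infinite strictly increasing chain
\[
y < z < g \cdot z < g^2 \cdot z < g^3 \cdot z < \cdots
\]
inside $\cal{O}(x)$. But $\cal{O}(x)$ is finite, a contradiction. Hence the induced poset on $\cal{O}(x)$ is an antichain.

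Finally, I would conclude by noting that when the poset structure is trivial, every singleton $\{y\} = U_y$ is already downwards-closed, so each singleton is open in $\cal{O}(x)$. Thus the subspace topology on $\cal{O}(x)$ is discrete. The only mildly delicate step is the iterated-automorphism argument in the second paragraph, where one must be careful that strict inequalities are preserved under the homeomorphism $g$; everything else follows from the dictionary between finite $T_0$ spaces and posets already set up in the preliminaries.
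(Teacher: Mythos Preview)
Your proof is correct and follows essentially the same approach as the paper: both arguments show that no two distinct points of $\cal{O}(x)$ can be comparable in the induced poset, using that each $g\in G$ acts as a monotone bijection together with finiteness. The paper's version is terser---it simply asserts that for $g\cdot x\neq x$ one has $x\not\leq g\cdot x$ and $x\not\geq g\cdot x$---whereas you spell out the underlying chain argument (iterating $g$ to produce a strictly increasing sequence that cannot fit inside the finite orbit); this is exactly the justification the paper's one-line claim implicitly rests on.
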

\begin{proof}
Let $x \in X$ and let $g \in G$ such that $g \cdot x \not = x$. Since $g$ is monotone, we have $x \not \ge g \cdot x$ and $x \not \le g \cdot x$. It follows that the largest downwards-closed subset of $\cal O(x)$ contains only one element, concluding the proof.
\end{proof}

\begin{proposition}\label{prop:LSofProduct}
Let $G$ be a finite group and let $X$ and $Y$ be $T_0$ $G$-spaces. Then $\LS_G(X \times Y) \leq \#(G)\LS_G(X)\LS_G(Y)$. 
\end{proposition}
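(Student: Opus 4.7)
The plan is to begin with $G$-categorical covers $\{U_i\}_{i=1}^{\LS_G(X)}$ of $X$ and $\{V_j\}_{j=1}^{\LS_G(Y)}$ of $Y$. Each $U_i$ comes with a $G$-homotopy $H_i : U_i \times I \to X$ from the inclusion to a $G$-equivariant map $\phi_i : U_i \to X$ whose image lies in a single orbit $\cal{O}(x_i)$, and similarly $K_j : V_j \times I \to Y$ ending at $\psi_j$ with image in $\cal{O}(y_j)$. The products $U_i \times V_j$ cover $X \times Y$ by open $G$-invariant sets under the diagonal action, and the combined map $(u,v,t) \mapsto (H_i(u,t), K_j(v,t))$ is a $G$-homotopy from the inclusion $U_i \times V_j \hookrightarrow X \times Y$ to $\phi_i \times \psi_j$. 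However, $\phi_i \times \psi_j$ lands in $\cal{O}(x_i) \times \cal{O}(y_j)$, which is generally \emph{not} a single diagonal $G$-orbit, so the $U_i \times V_j$ need not themselves be $G$-categorical.

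The idea is to refine each $U_i \times V_j$ by pulling back the decomposition of $\cal{O}(x_i) \times \cal{O}(y_j)$ into diagonal $G$-orbits. By Lemma~\ref{lem:TopOnOrbit}, each of $\cal{O}(x_i)$ and $\cal{O}(y_j)$ carries the discrete topology, so the product $\cal{O}(x_i) \times \cal{O}(y_j)$ is discrete and every diagonal orbit $O$ in it is an open, $G$-invariant subset. Setting $W_{ij}^O := (\phi_i \times \psi_j)^{-1}(O)$, these pieces are open; they are $G$-invariant since $\phi_i \times \psi_j$ is $G$-equivariant and $O$ is $G$-invariant; and they partition $U_i \times V_j$ because every point has a unique image orbit. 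Restricting the combined homotopy to $W_{ij}^O \times I$ gives a $G$-homotopy from $W_{ij}^O \hookrightarrow X \times Y$ to a map with image in the single orbit $O$, so each $W_{ij}^O$ is $G$-categorical in $X \times Y$.

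Finally, I count the diagonal $G$-orbits in $\cal{O}(x_i) \times \cal{O}(y_j)$. Identifying the factors with coset spaces $G/G_{x_i}$ and $G/G_{y_j}$, the diagonal orbits correspond bijectively to double cosets in $G_{x_i} \backslash G / G_{y_j}$; since double cosets form a partition of $G$ into nonempty blocks, there are at most $\#(G)$ of them. Hence each $U_i \times V_j$ is refined into at most $\#(G)$ $G$-categorical pieces, and the resulting cover of $X \times Y$ has size at most $\#(G)\,\LS_G(X)\,\LS_G(Y)$, as required. The main conceptual step is the use of Lemma~\ref{lem:TopOnOrbit} to guarantee that orbit preimages are \emph{open}, and this is the only point where the finiteness of $G$ and the $T_0$ hypothesis are essential; everything else is routine equivariant bookkeeping.
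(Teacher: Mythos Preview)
Your proof is correct and follows essentially the same route as the paper: build the product homotopy $H_i \times K_j$, use Lemma~\ref{lem:TopOnOrbit} to see that $\cal{O}(x_i)\times\cal{O}(y_j)$ is discrete, and pull back its diagonal $G$-orbits to refine each $U_i\times V_j$ into open invariant $G$-categorical pieces. The only cosmetic difference is in the counting: the paper simply indexes the diagonal orbits by the list $\cal{O}((x_i,g_k\cdot y_j))$ as $g_k$ ranges over $G$ (allowing repetitions), whereas you identify them with double cosets $G_{x_i}\backslash G/G_{y_j}$ and bound their number by $\#(G)$; both give the same final bound.
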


\begin{proof}
Suppose that $\LS_G(X) = m$ and $\LS_G(Y) = n$. Then $X$ has an open, invariant cover $\{U_i\}_{i=1}^m$ such that there exists a family of $G$-homotopies $H_i: U_i \times I \to X$ such that $H_i|_0 = i_{U_i}$ and the image $H_i|_1(U_i) \subseteq \cal{O}(x_i)$, whenever $1 \leq i \leq m$. Similarly, $Y$ has an open, invariant cover $\{V_j\}_{j=1}^m$ with a family of $G$-homotopies $K_j: V_j \times I \to Y$ such that $K_j|_0 = i_{V_j}$ and the image $K_j|_1(V_j) \subseteq \cal{O}(y_j)$.

For any $U_i$ and $V_j$ we can define a $G$-homotopy $F_{ij}: U_i \times V_j \times I \to X \times Y$ by $F_{ij}(x, y, t) = (H_i(x, t), K_j(y, t))$. This homotopy has the properties that $F_{ij}|_0 = i_{U_i} \times i_{V_j} = i_{U_i \times V_j}$ and that the image $F_{ij}|_1(U_i \times V_j) = H_i|_1(U_i) \times K_j|_1(V_j) \subseteq \cal{O}(x_i) \times \cal{O}(y_j)$. 


If $g_1, g_2, \dots, g_\ell \in G$ are all the elements of $G$, then $\cal{O}(x_i) \times \cal{O}(y_j)$ can be written as a union of sets $\cal{O}((x_i, g_1 \cdot y_j)), \cal{O}((x_i, g_2 \cdot y_j)), \dots, \cal{O}((x_i, g_\ell \cdot y_j))$. By \ref{lem:TopOnOrbit}, $\cal{O}(x_i)$ and $\cal{O}(y_j)$ both have the discrete topology, meaning their product $\cal{O}(x_i) \times \cal{O}(y_j)$ also has the discrete topology. Therefore, $\cal{O}((x_i, g_k \cdot y_j))$ is an open, invariant set in $\cal{O}(x_i) \times \cal{O}(y_j)$. If we define $W_{ijk} = (F_{ij}|_1)^{-1}\left(\cal{O}((x_i, g_k \cdot y_j))\right)$, then $W_{ijk}$ is open in $U_i \times V_i$ and invariant, and $\{W_{ijk}\}_{k=1}^\ell$ forms a cover of $U_i \times W_i$. Since $U_i \times V_j$ is open, and $W_{ijk}$ is open in the subspace topology $U_i \times V_j$, then by general topology, $W_{ijk}$ is open in $X \times Y$. Since the $U_i \times V_j$'s cover $X \times Y$ and the $W_{ijk}$'s cover each member of that cover, then the $W_{ijk}$'s also cover $X \times Y$. 

Finally, we show that each $W_{ijk}$ is $G$-categorical. Recall our homotopy $F_{ij}$ from earlier and restrict it to $W_{ijk} \times I$. Then $F_{ij}|_{W_{ijk} \times \{0\}}  = i_{U_i \times V_j}|_{W_{ijk}} = i_{W_{ijk}}$. We also have by definition of $W_{ijk}$ that the image $F_{ij}|_1(W_{ijk}) = \cal{O}((x_i, g_k \cdot y_j))$. Thus, $W_{ijk}$ is $G$-categorical and so $\{W_{ijk}\}_{(i,j,k)=(1,1,1)}^{(m,n,l)}$ is an open invariant $G$-categorical cover of $X \times Y$. We see that there are exactly $l \cdot m \cdot n = \#(G)\LS_G(X)\LS_G(Y)$ sets in this cover as desired.
\end{proof}

The above proposition provides a valuable bound for combinatorial calculations. The following proposition is a first insight into the utility of subdivisions.

\begin{proposition}
Let $X$ be a finite $T_0$ $G$-space, then $\LS_G(\sd(X)) \leq \LS_G(X)$.
\end{proposition}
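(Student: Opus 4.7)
The plan is to push a $G$-categorical cover of $X$ through the $G$-equivariant last-vertex map $\tau\colon\sd(X)\to X$ to produce an open invariant cover of $\sd(X)$ of the same cardinality, each member of which is $G$-categorical.

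First I would fix a minimal $G$-categorical open invariant cover $\{U_1,\dots,U_k\}$ of $X$ with $k=\LS_G(X)$, together with $G$-homotopies $H_i\colon U_i\times I\to X$ from $i_{U_i}$ to maps valued in single orbits $\cal{O}(x_i)$. Set $V_i:=\tau^{-1}(U_i)$. The $V_i$ are open and $G$-invariant by continuity and equivariance of $\tau$ (Lemma~\ref{lem:EquivarT}), and they cover $\sd(X)$ since the $U_i$ cover $X$. A helpful reformulation: because $U_i$ is downward-closed and a chain $\sigma$ lies in $V_i$ iff $\max\sigma\in U_i$, every element of such a chain already lies in $U_i$, so $V_i = \sd(U_i)$, the subposet of chains of $X$ contained in $U_i$.

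Next I would show that each $V_i$ is $G$-categorical in $\sd(X)$. Passing to the combinatorial setting, realize $H_i$ as a monotone $G$-equivariant map $H_i\colon U_i\times J_{m_i}\to X$, which factors as a zigzag $i_{U_i}=f_0\leq f_1\geq f_2\leq\cdots=f_{m_i}$ of pointwise-comparable monotone $G$-maps. For each slice $f_t$, define $\tilde f_t\colon V_i\to\sd(X)$ by $\sigma\mapsto\{f_t(p):p\in\sigma\}$ (deduplicated to a chain). Monotonicity of $f_t$ guarantees the image is a chain in $X$; functoriality yields that each $\tilde f_t$ is monotone and $G$-equivariant, with $\tilde f_0=i_{V_i}$. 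At $t=m_i$, Lemma~\ref{lem:TopOnOrbit} ensures that $\cal{O}(x_i)$ carries the discrete topology, so $f_{m_i}$ must be constant on every chain $\sigma$; thus $\tilde f_{m_i}(\sigma)$ is a singleton in $\cal{O}(x_i)$, hence a point of the orbit $\cal{O}(\{x_i\})\subseteq\sd(X)$.

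The main obstacle is assembling the $\tilde f_t$ into a genuine combinatorial $G$-homotopy. Pointwise comparability of $f_t$ and $f_{t+1}$ does \emph{not} translate to comparability of $\tilde f_t(\sigma)$ and $\tilde f_{t+1}(\sigma)$ in $\sd(X)$; in general neither chain contains the other, and even their union need not be totally ordered. To bridge this I would insert finitely many auxiliary time-slices that replace elements of $\tilde f_t(\sigma)$ by their images under $f_{t+1}$ one at a time, working from the top of $\sigma$ downward. At each step one first \emph{adds} the new top $f_{t+1}(p)$ (which is $\geq f_t(p)$, so this is a legitimate chain extension) and then \emph{removes} $f_t(p)$, producing a zigzag of inclusions in $\sd(X)$. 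With uniform bookkeeping that pads out smaller chains, this packages into a monotone $G$-equivariant map $V_i\times J_N\to\sd(X)$ interpolating $\tilde f_t$ and $\tilde f_{t+1}$. Concatenating across the full zigzag yields the desired combinatorial $G$-homotopy from $i_{V_i}$ to a map into $\cal{O}(\{x_i\})$, so each $V_i$ is $G$-categorical in $\sd(X)$ and $\LS_G(\sd(X))\leq k=\LS_G(X)$.
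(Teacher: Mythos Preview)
Your plan is sound and can be completed to a correct proof; it also takes a genuinely different route from the paper. The paper's argument is much more direct: after setting $V_i=\sd(U_i)$ exactly as you do, it defines $H_i:V_i\times J_{m+1}\to\sd(X)$ by $H_i(v,0)=v$ and $H_i(v,j)=F(\tau(v),j-1)$ for $j\ge 1$, tacitly identifying points of $X$ with singleton chains in $\sd(X)$. Conceptually this says ``collapse each chain to its top vertex, then run the old homotopy on singletons.'' Your approach instead pushes the entire chain through each time-slice via $\sigma\mapsto f_t(\sigma)$ and then interpolates. The paper's route is shorter, but it has a subtle defect that your route avoids: two distinct singletons $\{a\},\{b\}$ in $\sd(X)$ are comparable only when $a=b$, so for $j\ge 1$ consecutive slices of the paper's $H_i$ are generally not pointwise comparable and $H_i$ is not monotone. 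Your functorial lift $\tilde f_t=\sd(f_t)$ sidesteps this, at the cost of needing the interpolation you correctly flag as the main obstacle.

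To turn your ``uniform bookkeeping'' into an actual construction, index by height in $X$ rather than by position in $\sigma$; height is $G$-invariant since $G$ acts by order-automorphisms. For $f_t\le f_{t+1}$ and $N=\operatorname{ht}(X)$, set
\[
C_{2j}(\sigma)=\{f_t(p):p\in\sigma,\ \operatorname{ht}(p)\le N-j\}\ \cup\ \{f_{t+1}(p):p\in\sigma,\ \operatorname{ht}(p)>N-j\},
\]
\[
C_{2j+1}(\sigma)=\{f_t(p):p\in\sigma,\ \operatorname{ht}(p)\le N-j\}\ \cup\ \{f_{t+1}(p):p\in\sigma,\ \operatorname{ht}(p)\ge N-j\}.
\]
The height cutoff forces every pair in $C_s(\sigma)$ to be comparable (if $f_t(p)$ and $f_{t+1}(q)$ both appear then $\operatorname{ht}(p)\le N-j\le\operatorname{ht}(q)$, whence $p\le q$ in the chain $\sigma$ and $f_t(p)\le f_t(q)\le f_{t+1}(q)$). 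Each $C_s$ is visibly monotone in $\sigma$ and $G$-equivariant, and one has $\tilde f_t=C_0\subseteq C_1\supseteq C_2\subseteq\cdots\supseteq C_{2N+2}=\tilde f_{t+1}$. This is precisely your add-then-remove staircase, made uniform across chains of different lengths; concatenating over the original zigzag finishes the proof as you outlined.
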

\begin{proof}
Let $X$ be a finite $T_0$ $G$-space with the subdivision $\sd(X)$. Say $\LS_G(X)=k$, then there exists and open invarianct cover $\{U_i\}_{i=1}^k$ and for every $i$, the inclusion $i_{U_i}: U_i \hookrightarrow X$ is $G$-homotopic to a map taking values in a single orbit. Then let $F_i: U_i \times J_m \to X$ be some $G$-homotopy between the inclusion and an orbit. So $F_i|_{U_i \times \{0\}} = i_{U_i}$ and $F_i|_{U_i \times \{m\}} \subseteq \cal{O}(z)$ where $z \in X$. Let $V_i = \sd(U_i) \subseteq \sd(X)$.
Since the canonical map $\tau$ is equivariant and $U_i$ is invariant, $V_i$ is as well. $V_i$ is also open by the topology on $\sd(X)$ and given the homotopy $F_i$, we can construct a homotopy $H_i: V_i \times J_{m+1} \to \sd(X)$ by 
\[H_i(v, j) = \begin{cases}v       &\text{ if } j=0\\
F(\tau(v), j-1)            &\text{ if } 0<j\leq m
\end{cases}\]
Since $X$ and $\sd^n(X)$ are both $G$-spaces, and $\tau$ is equivariant $H$ is also equivariant under $G$. $H_i|_{V_i \times \{0\}} = i_{v_i}$ and $H_i|_{V_i \times \{m+1\}} = F|{U_i \times \{m\}} \subseteq \cal{O}(z)$ for some $z \in X$. Since $z \in X$, $z \in \sd(X)$ as well. And, since we define the $G$-action on the subdivision by the group action on $X$, the orbit of $z$ is the same in both $X$ and $\sd(X)$. So there is an open invariant cover $\{V_i\}_{i=1}^k$ of $\sd(X)$ and $\LS_G(\sd(X)) \leq k$, as desired.
\end{proof}
\begin{remark}
It is easy to check that for any poset $Q$, $\sd(Q) = \sd(Q^{op})$.
\end{remark}

\begin{example}
We consider the pseudo-torus under diagonal reflection of $\bb{Z}/2$. Define the pseudo-circle $\sigma^1 = \{a, b, c, d\}$ equipped with the order $c \leq a$, $d \leq a$, $c \leq b$, $d \leq b$. It is called the pseudo-circle since $|\scr{K}(\Sigma^1)| \cong S^1$. 
 
 We define the pseudo-torus as the product poset $T_{ps} = \Sigma^1 \times \Sigma^1$ pictured in \ref{fig:ryanspersonalhell}. Define the $\bb{Z}/2$ action as $\mu: (p, q) \mapsto (q, p)$. One can check that this group action is continuous and that the fixed point space is the path-connected space $\{(a,a), (b,b), (c, c), (d, d)\}$. Suppose the maximal point $(a,b)$ is contained in an open invariant set $U \subset T_{ps}$. Then $(b,a) \in U$. Since $U$ contains the pseudo circle $\{(a,b), (c,d), (b,a), (d,c)\}$, we have that $U$ is not homotopic to the orbit of a single point. Then $\LS_{\bb{Z}/2}(T_{ps}) = \infty$.
\end{example}
  \begin{figure}
      \centering
    
\begin{tikzpicture}[scale=1.25]

\node (bd) at (0, 0, -3) {$bd$};
\node (bc) at (0, 0, -5) {$bc$};
\node (ba) at (0, 1, -4) {$ba$};
\node (bb) at (0, -1, -4) {$bb$};

\node (ad) at (0, 0, 3) {$ad$};
\node (ac) at (0, 0, 5) {$ac$};
\node (aa) at (0, 1, 4) {$aa$};
\node (ab) at (0, -1, 4) {$ab$};

\node (cd) at (-3, 0, 0) {$cd$};
\node (cc) at (-5, 0, 0) {$cc$};
\node (ca) at (-4, 1, 0) {$ca$};
\node (cb) at (-4, -1, 0) {$cb$};

\node (dd) at (3, 0, 0) {$dd$};
\node (dc) at (5, 0, 0) {$dc$};
\node (da) at (4, 1, 0) {$da$};
\node (db) at (4, -1, 0) {$db$};

\draw[black, ->] (dc) -- (bc); 
\draw[black, ->] (cc) -- (bc);

\draw[black, ->] (bc) -- (ba); 
\draw[black, ->] (bc) -- (bb);
\draw[white, line width=4pt] (bd) -- (ba);
\draw[black, ->] (bd) -- (ba);
\draw[black, ->] (bd) -- (bb);

\draw[black, ->] (cb) -- (ab); 
\draw[black, ->] (cb) -- (bb);
\draw[black, ->] (db) -- (ab);
\draw[black, ->] (db) -- (bb);

\draw[white, line width=4pt] (cd) -- (ad);
\draw[black, ->] (cd) -- (ad); 
\draw[black, ->] (cd) -- (bd);
\draw[white, line width=4pt] (dd) -- (ad);
\draw[black, ->] (dd) -- (ad);
\draw[white, line width=4pt] (dd) -- (bd);
\draw[black, ->] (dd) -- (bd);

\draw[black, ->] (cc) -- (ca); 
\draw[black, ->] (cc) -- (cb);
\draw[white, line width=4pt] (cd) -- (ca);
\draw[black, ->] (cd) -- (ca);
\draw[black, ->] (cd) -- (cb);

\draw[black, ->] (dc) -- (da); 
\draw[black, ->] (dc) -- (db);
\draw[white, line width=4pt] (dd) -- (da);
\draw[black, ->] (dd) -- (da);
\draw[black, ->] (dd) -- (db);

\draw[white, line width=4pt] (ca) -- (aa);
\draw[black, ->] (ca) -- (aa); 
\draw[black, ->] (ca) -- (ba);
\draw[white, line width=4pt] (da) -- (aa);
\draw[black, ->] (da) -- (aa);
\draw[black, ->] (da) -- (ba);

\draw[white, line width=4pt] (ac) -- (aa);
\draw[black, ->] (ac) -- (aa); 
\draw[black, ->] (ac) -- (ab);
\draw[black, ->] (ad) -- (aa);
\draw[black, ->] (ad) -- (ab);

\draw[white, line width=4pt] (cc) -- (ac);
\draw[black, ->] (cc) -- (ac); 
\draw[white, line width=4pt] (dc) -- (ac);
\draw[black, ->] (dc) -- (ac);
\end{tikzpicture}
      
      \caption{The Pseudo Torus}
      \label{fig:ryanspersonalhell}
  \end{figure}
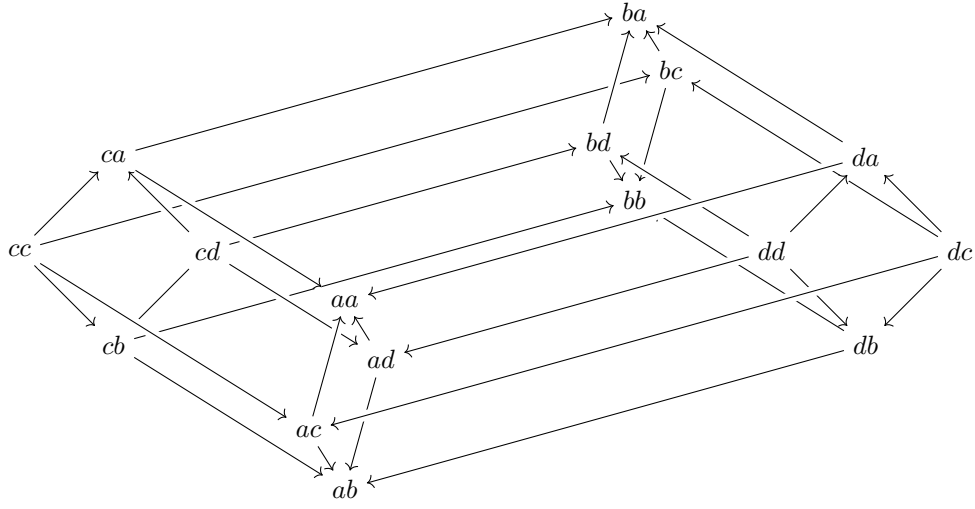

Now, we investigate how subdividing a space affects its topological complexity. Specifically, we prove the surprising result that one need only subdivide a poset twice in order for its equivariant topological complexity to become finite. However, we need to prove some intermediate results about subdivision and group actions first.

\begin{proposition}\label{prop:subdivisionContract}
Let $X$ be a finite $T_0$ $G$-space. Let $P$ be the downwards closure of a point in $\sd^2(X)$. Then $G\cdot P$ is an open, $G$-categorical subset of $\sd^2(X)$.
\end{proposition}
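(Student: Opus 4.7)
The plan is to prove openness and $G$-invariance of $G\cdot P$ directly from the definitions, and then construct an explicit equivariant combinatorial homotopy from the inclusion $G\cdot P\hookrightarrow\sd^2(X)$ to a map landing in a single $G$-orbit. Openness and invariance are immediate: writing $P=U_\sigma$, each translate $gP=U_{g\sigma}$ is open in $\sd^2(X)$, so $G\cdot P=\bigcup_{g\in G}U_{g\sigma}$ is open, and it is invariant by construction. For $G$-categoricity, write $\sigma=(\sigma_0\subsetneq\sigma_1\subsetneq\cdots\subsetneq\sigma_n)$ as a chain in $\sd(X)$, where each $\sigma_i$ is itself a chain in $X$. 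I aim to homotope the inclusion onto the orbit $\cal{O}([\sigma_0])$, where $[\sigma_0]\in\sd^2(X)$ is the singleton chain $\{\sigma_0\}$.

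The crux, and the place where the use of $\sd^2$ is essential, is the following lemma: \emph{if $\tau\in U_{g\sigma}\cap U_{g'\sigma}$, then $g\sigma_0=g'\sigma_0$.} To prove it I would pick the minimum $\tau_{\min}\in\sd(X)$ of the chain $\tau$; since $\tau\subseteq g\sigma$ and $\tau\subseteq g'\sigma$, both $g\sigma_0\subseteq\tau_{\min}$ and $g'\sigma_0\subseteq\tau_{\min}$ as chains in $X$. Hence $g\sigma_0\cup g'\sigma_0$ is a subset of the chain $\tau_{\min}$ and so is itself a chain in $X$. Setting $h=g'g^{-1}$, for each $x\in g\sigma_0$ the elements $x$ and $hx$ are both in this chain and are therefore comparable; the strict inequality $x<hx$ would produce an infinite ascending sequence $x<hx<h^2x<\cdots$ under repeated application of the order-preserving $h$, contradicting finiteness of $X$, and similarly $x>hx$ is impossible. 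Thus $hx=x$ for every $x\in g\sigma_0$, giving $g\sigma_0=g'\sigma_0$. The argument genuinely needs $\sd^2$: in $\sd(X)$ the analogues of $\sigma_0$ and $g'\sigma_0$ would merely be \emph{points} of $X$ below $\tau_{\min}$, with no compatibility forcing them into a common chain.

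With the lemma in hand, define $f\colon G\cdot P\to\cal{O}([\sigma_0])$ by $f(\tau)=[g\sigma_0]$ for any $g$ with $\tau\in U_{g\sigma}$; well-definedness is the lemma, and $G$-equivariance is immediate from the assignment $g\mapsto hg$. The combinatorial homotopy $H\colon (G\cdot P)\times J_2\to\sd^2(X)$ is then
\[
H(\tau,0)=\tau,\qquad H(\tau,1)=\tau\cup\{g\sigma_0\},\qquad H(\tau,2)=[g\sigma_0],
\]
with the same choice of $g$. The middle value lies in $\sd^2(X)$ because $\tau\cup\{g\sigma_0\}$ is a subset of the chain $g\sigma\in\sd^2(X)$, hence itself a chain in $\sd(X)$; the zigzag $\tau\leq H(\tau,1)\geq[g\sigma_0]$ in $\sd^2(X)$ gives monotonicity in the $J_2$-direction; and $G$-equivariance of $H$ follows from $h(\tau\cup\{g\sigma_0\})=h\tau\cup\{hg\sigma_0\}$ combined with the equivariance of $f$. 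The main obstacle throughout is the key lemma and its reliance on second subdivision; the construction and verification of $f$ and $H$ afterwards is a direct bookkeeping exercise.
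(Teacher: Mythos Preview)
Your argument is correct. The key lemma is sound: from $\tau\subseteq g\sigma$ you correctly extract $g\sigma_0\subseteq\tau_{\min}$, and the infinite-ascending-chain contradiction forces $g\sigma_0=g'\sigma_0$ pointwise. The homotopy $H$ is well-defined, monotone in both coordinates, and equivariant for the reasons you give.

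Your route differs from the paper's. The paper works top-down and iteratively: it peels off the maximal chain $s_n$ from $S=\{s_1\subsetneq\cdots\subsetneq s_n\}$ via a $J_2$ zigzag (first push any $g\cdot U$ containing $s_n$ up to $g\cdot S$, then drop $s_n$), and repeats until only $\cal{O}(\{s_1\})$ remains. Well-definedness there rests on $s_n$ being the unique element of $S$ of its cardinality, so that $g\cdot s_n=h\cdot s_n$ whenever $g\cdot U=h\cdot V$ with $s_n\in U$; the further implication $g\cdot S=h\cdot S$ tacitly uses the same ``monotone self-bijection of a chain is the identity'' principle you invoke explicitly. By contrast, you work bottom-up and in a single step: adjoin $g\sigma_0$ to $\tau$ and then collapse to $\{g\sigma_0\}$. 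Your approach yields a homotopy of length $2$ regardless of $n$, and isolates cleanly the one place where the second subdivision is essential (your lemma); the paper's approach is slightly more mechanical but produces a homotopy of length $2(n-1)$ and leaves the role of $\sd^2$ a bit more implicit.
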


\begin{proof}
    Suppose that $P$ is the downwards closure of $S=\{s_1\subsetneq\cdots\subsetneq s_n\}$, i.e. $P=\mathbb{P}(S)\setminus \varnothing$. 
	
	We define a map 
	\[
	\begin{tikzcd}[row sep=0em,ampersand replacement=\&]
	f_1: \&[-3em] G\cdot P \arrow[r] \& G\cdot P \\
	  \& g\cdot U\arrow[r,mapsto] \& \begin{cases}
	   g\cdot S & s_n\in U\\
	   g\cdot U & \text{else}. 
	  \end{cases}
	\end{tikzcd}
	\]
	To see that $f_1$ is well-defined, we note that if $g\cdot U=h\cdot V$, and $s_n\in U$, then $h^{-1}g\cdot U=V\subseteq S$. Since the only element of $S$ of cardinality $\#(s_n)$ is $s_n$, we see that $h^{-1}g\cdot s_n= s_n\in V$. Similarly, if $g\cdot U=h\cdot V$ and $s_n\in U$, then $g\cdot s_n=h\cdot s_n$, and so $g\cdot S=h\cdot S$. 
	
	We also define a map
	\[
	\begin{tikzcd}[row sep=0em,ampersand replacement=\&]
	f_2: \&[-3em] G\cdot P \arrow[r] \& G\cdot P \\
	\& g\cdot U\arrow[r,mapsto] \& g\cdot (U\setminus \{s_n\}) 
	\end{tikzcd}
	\]
	To see that $f_2$ is well-defined, we note, as above, if $g\cdot U=h\cdot V$, then (1) $s_n\in U$ if and only if $s_n\in V$ and (2) if $s_n\in U$, then $g\cdot s_n=h\cdot s_n$. 
	
	It is immediate from construction that $f_2\leq f_1\geq \on{Id}_{G\cdot P}$. Routine checks confirm that $f_1$ and $f_2$ are monotone and equivariant.

	Defining $P_k$ to be the downwards closure of $\{s_1\subsetneq \cdots \subsetneq s_k\}$ in $\sd^2(X)$, we get a filtration 
	\[
	G\cdot P_1\subseteq G\cdot P_2 \subseteq \cdots \subseteq G\cdot P_n=g\cdot P.
	\]
	
	Above, we have defined a combinatorial $G$-homotopy from $\on{Id}_{G\cdot P_n}$ to a map $f_2$ taking values in $G\cdot P_{n-1}$. Iterating this argument shows that $\on{Id}_{G\cdot P}$ is combinatorially $G$-homotopic to a map taking values in $G\cdot P_1=G\cdot s_1=\cal{O}(s_1)$. Thus, $G\cdot P$ is $G$-categorical.
\end{proof}

The proposition leads us to a significant corollary.

\begin{corollary}\label{cor:LSG_finite_suff_small}
Let $X$ be a finite $T_0$ $G$-space. Then $\LS_G(\sd^n(X)) < \infty$ for all $n \ge 2$. 
\end{corollary}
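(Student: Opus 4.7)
The plan is to reduce the corollary to the case $n = 2$ and then invoke Proposition~\ref{prop:subdivisionContract} together with finiteness of $\sd^2(X)$. First I would handle the base case $n = 2$ by noting that every point $y \in \sd^2(X)$ lies in its own downwards closure $U_y$, so the collection $\{G \cdot U_y\}_{y \in \sd^2(X)}$ is a cover of $\sd^2(X)$. Each set $G \cdot U_y$ is open (as the orbit of an open set under a finite group acting by homeomorphisms) and invariant, and by Proposition~\ref{prop:subdivisionContract} each is $G$-categorical. Since $\sd^2(X)$ is finite, this cover has finitely many members, so $\LS_G(\sd^2(X)) < \infty$.

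For $n > 2$, I would write $\sd^n(X) = \sd^{n-2}(\sd^2(X))$ and iterate the inequality $\LS_G(\sd(Y)) \leq \LS_G(Y)$ established earlier in this section. This yields
\[
\LS_G(\sd^n(X)) = \LS_G(\sd^{n-2}(\sd^2(X))) \leq \LS_G(\sd^2(X)) < \infty,
\]
completing the proof.

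There is essentially no main obstacle here, since Proposition~\ref{prop:subdivisionContract} does the heavy lifting; one only needs to confirm that the union of the downwards closures of points covers $\sd^2(X)$, which is immediate, and that $G \cdot U_y$ is genuinely open (a finite union of open sets, namely the translates $g \cdot U_y = U_{g \cdot y}$, each being open in the poset topology on $\sd^2(X)$). The only subtlety worth spelling out is the reduction from general $n$ to $n = 2$, which requires the subdivision inequality for $\LS_G$ proved earlier but does not interact with the specific combinatorics of the double subdivision.
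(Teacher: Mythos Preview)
Your proposal is correct and follows essentially the same approach as the paper: both invoke Proposition~\ref{prop:subdivisionContract} to cover $\sd^2(X)$ by finitely many open $G$-categorical sets of the form $G\cdot U_y$. The only minor difference is in how the case $n>2$ is handled: you reduce to $n=2$ via the inequality $\LS_G(\sd(Y))\leq \LS_G(Y)$, whereas the paper's terse proof only writes out the $\sd^2$ case and implicitly leaves the reader to observe that $\sd^n(X)=\sd^2(\sd^{n-2}(X))$ so that Proposition~\ref{prop:subdivisionContract} applies directly with $\sd^{n-2}(X)$ in place of $X$; both routes are equally valid.
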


\begin{proof}
Every point of $\sd^2(X)$ lies in the downwards closure $U$ of a point in $\sd^2(X)$. By Proposition~\ref{prop:subdivisionContract} this means every point of $\sd^2(X)$ is contained in an open $G$-categorical subspace.
\end{proof}


\begin{lemma}\label{lem:sdFixedPts}
Let $X$ be a finite $T_0$ $G$-space. Then for all $n \in \NN$ we have $\sd^n(X^G) = \sd^n(X)^G$.
\end{lemma}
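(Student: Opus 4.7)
The plan is to reduce the statement to the base case $n=1$ and then induct. The key assertion is that $\sd(Y)^G = \sd(Y^G)$ for any finite $T_0$ $G$-space $Y$; granted this, the lemma follows by applying it successively to $X, \sd(X), \sd^2(X), \ldots$, using that $\sd^n(X)$ is again a finite $T_0$ $G$-space under the action defined in the paper.

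For the base case I would unpack the $G$-action on $\sd(Y)$ explicitly. A chain $\sigma = \{p_0 < p_1 < \cdots < p_k\} \in \sd(Y)$ is $G$-fixed precisely when $g \cdot \sigma = \sigma$ as a subset of $Y$ for every $g \in G$. The inclusion $\sd(Y^G) \subseteq \sd(Y)^G$ is immediate: if each $p_i \in Y^G$, then $g \cdot p_i = p_i$ and hence $g \cdot \sigma = \sigma$.

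For the reverse inclusion $\sd(Y)^G \subseteq \sd(Y^G)$, the main observation is that the action of each $g \in G$ is by a poset automorphism of $Y$ (it is a monotone bijection, since $g^{-1}$ is also monotone). Therefore $g \cdot \sigma = \{g\cdot p_0 < g\cdot p_1 < \cdots < g\cdot p_k\}$ is a chain with the same total order as $\sigma$. If $g \cdot \sigma = \sigma$ as a set, then the only order-preserving bijection of a finite totally ordered set to itself is the identity, forcing $g \cdot p_i = p_i$ for each $i$, so every element of $\sigma$ lies in $Y^G$.

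The inductive step is then purely formal: assuming $\sd^n(X^G) = \sd^n(X)^G$, we compute
\[
\sd^{n+1}(X^G) = \sd\bigl(\sd^n(X^G)\bigr) = \sd\bigl(\sd^n(X)^G\bigr) = \sd\bigl(\sd^n(X)\bigr)^G = \sd^{n+1}(X)^G,
\]
where the third equality is the base case applied to $Y = \sd^n(X)$. The only subtle point, and the one I would be most careful about, is the rigidity argument in the base case: it is essential that the $G$-action on $Y$ is by automorphisms (not merely monotone self-maps), which is automatic because every element of $G$ is invertible.
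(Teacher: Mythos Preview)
Your argument is correct. The paper states this lemma without proof, so there is nothing to compare against; your approach---reducing to $n=1$ via induction and using that each $g$ acts as an order automorphism to force a $G$-fixed chain to consist of $G$-fixed elements---is the standard one and exactly what one would expect here.
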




Now we are able to prove the main result of this section.
\begin{theorem}\label{thm:finiteTCG}
Let $X$ is a finite $G$-connected $T_0$ space. Then $\TC_G(\sd^n(X)) < \infty$ for $n \ge 2$.
\end{theorem}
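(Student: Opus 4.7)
The plan is to chain together the bounds and finiteness results developed earlier in the section. Concretely, I would argue: since $X$ is $G$-connected, so is $\sd^n(X)$; then Lemma~\ref{prop:GConnectedBound} gives $\TC_G(\sd^n(X)) \leq \LS_G(\sd^n(X) \times \sd^n(X))$; Proposition~\ref{prop:LSofProduct} bounds this by $\#(G)\,\LS_G(\sd^n(X))^2$; and finally Corollary~\ref{cor:LSG_finite_suff_small} ensures $\LS_G(\sd^n(X)) < \infty$ for $n \geq 2$. Stringing these together yields the claim.

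The first step, $G$-connectedness of $\sd^n(X)$, is the part that needs checking. Given any subgroup $H \leq G$, the $H$-action on $X$ is just the restriction of the $G$-action, so Lemma~\ref{lem:sdFixedPts} applies to $H$ as well, yielding $\sd^n(X)^H = \sd^n(X^H)$. Since $X$ is $G$-connected, $X^H$ is path-connected. The last-vertex map $\tau: \sd(P) \to P$ is a weak homotopy equivalence for any finite poset $P$, and weak equivalences preserve path-connectedness, so iterating gives that $\sd^n(X^H)$ is path-connected. Hence $\sd^n(X)^H$ is path-connected for every $H \leq G$, i.e., $\sd^n(X)$ is $G$-connected.

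With $G$-connectedness in hand, the rest is a routine bookkeeping chain:
\[
\TC_G(\sd^n(X)) \;\leq\; \LS_G(\sd^n(X) \times \sd^n(X)) \;\leq\; \#(G)\,\LS_G(\sd^n(X))^2 \;<\; \infty,
\]
where the first inequality uses Lemma~\ref{prop:GConnectedBound}, the second uses Proposition~\ref{prop:LSofProduct}, and the last uses Corollary~\ref{cor:LSG_finite_suff_small} together with $n \geq 2$.

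The main obstacle is the $G$-connectedness step, and specifically justifying that subdivision preserves path-connectedness of fixed-point sets. If one does not want to invoke the McCord-style weak equivalence, one can instead argue combinatorially: any path in $X^H$ refines to a combinatorial path in $\sd^n(X^H)$ by repeatedly inserting barycenters, since an edge $x \leq y$ in $X^H$ lifts to the chain $\{x\} \subseteq \{x,y\}$ in $\sd(X^H)$. Everything else reduces to citing the previously established lemmas.
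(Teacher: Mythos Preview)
Your proposal is correct and follows essentially the same approach as the paper: establish $G$-connectedness of $\sd^n(X)$ via Lemma~\ref{lem:sdFixedPts} (applied to each subgroup $H$), then chain Lemma~\ref{prop:GConnectedBound}, Proposition~\ref{prop:LSofProduct}, and Corollary~\ref{cor:LSG_finite_suff_small}. If anything, you are more careful than the paper on the step that subdivision preserves path-connectedness of $X^H$, which the paper simply asserts.
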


\begin{proof}
Let $H$ be a closed subgroup of $G$. Since $X$ is $G$-connected, $X^H$ is path-connected. Then $\sd^n(X^H)$ is path-connected and $\LS_G(\sd^n(X)) < \infty$ by Corollary~\ref{cor:LSG_finite_suff_small}. Then $\sd^n(X)^H$ is path-connected by Lemma~\ref{lem:sdFixedPts}. Therefore $\sd^n(X)$ is $G$-connected. Since $\sd^n(X)$ is $G$-connected, we have $\TC_G(\sd^n(X)) \le \LS_G(\sd^n(X) \times \sd^n(X)) \le \#(G) \, \LS_G(\sd^n(X))^2$ by Proposition~\ref{prop:LSofProduct}. So we have 
$$
\TC_G(\sd^n(X)) \le \#(G) \, \LS_G(\sd^n(X))^2 < \infty.
$$ as desired.
\end{proof}

\section{Equivariant Combinatorial Complexity}\label{sec:CC_G}

In this section, we generalize Tanaka's notion of {\it combinatorial complexity} from \cite{tanaka_2018} to the equivariant setting. We follow his approach of defining a descending sequence of invariants which eventually converge to the topological complexity of our space. Many of the proofs in this section are nearly identical to their counterparts in \cite{tanaka_2018} once we show that the relevant maps are equivariant. 

\begin{definition}
 For any $m \geq 0$ and any finite $T_0$ $G$-space $X$, define $\CCC_{G, m}(X) := \secat_G(q_m)$ where $q_m: X^{J_m} \to X \times X$ is the combinatorial path fibration. That is, $\CCC_{G, m}(X)$ is the smallest number of open sets we may cover $X$ with that admit a motion planning which maps two points in $X$ to a path of length $m$ or less.
\end{definition}

This definition gives us a sequence of invariants. The following lemma shows that this sequence of invariants is well defined.

\begin{lemma}\label{lem:CCWellDefined}
For any $m \geq 0$ and any finite space $X$, we have $\CCC_{G, m}(X) \geq \CCC_{G, m+1}(X)$.
\end{lemma}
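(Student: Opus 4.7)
The plan is to show that any $G$-equivariant section of $q_m$ defined over an open invariant subset of $X \times X$ can be promoted to a $G$-equivariant section of $q_{m+1}$ over the same subset; passing to covers then yields the inequality. The key observation is that there should be a natural monotone retraction $r : J_{m+1} \to J_m$ defined by $r(i) = i$ for $i \leq m$ and $r(m+1) = m$. Intuitively, this corresponds to extending a length-$m$ combinatorial path to a length-$(m+1)$ path by appending a constant step at the end.

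First I would verify that $r$ is monotone. Since the order on $J_{m+1}$ restricted to $\{0, 1, \dots, m\}$ agrees with the order on $J_m$, the only new relation to check is the one between $m$ and $m+1$; regardless of the parity of $m$, the elements $r(m) = r(m+1) = m$ satisfy any such relation trivially. Precomposition with $r$ then defines a continuous map
\[
E : X^{J_m} \to X^{J_{m+1}}, \qquad \alpha \mapsto \alpha \circ r,
\]
which is $G$-equivariant because $G$ acts on these mapping spaces by post-composition on the target $X$, and hence commutes with precomposition on the source $J$. Since $r(0) = 0$ and $r(m+1) = m$, a direct computation gives $q_{m+1} \circ E = q_m$.

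Given an open invariant cover $\{U_i\}_{i=1}^{k}$ of $X \times X$ with $G$-equivariant sections $s_i : U_i \to X^{J_m}$ realizing $\CCC_{G,m}(X) = k$, the composites $E \circ s_i : U_i \to X^{J_{m+1}}$ will be $G$-equivariant and satisfy $q_{m+1} \circ (E \circ s_i) = q_m \circ s_i = i_{U_i}$. Thus the same open invariant cover witnesses $\CCC_{G,m+1}(X) \leq k = \CCC_{G,m}(X)$, and the lemma follows. There is no substantive obstacle here beyond the monotonicity of $r$ and the $G$-equivariance of $E$, both of which reduce to one-line verifications; the entire content of the lemma is the existence of the retraction $r$.
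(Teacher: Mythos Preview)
Your proposal is correct and follows essentially the same approach as the paper: the paper defers to Tanaka's Lemma~2.3, which uses a monotone map $r: J_{m+1} \to J_m$ to induce $r^*: X^{J_m} \to X^{J_{m+1}}$ over $X \times X$, and then merely verifies that $r^*$ is $G$-equivariant via the same one-line computation you indicate. Your map $E$ is exactly the paper's $r^*$, and your explicit description of $r$ and the section-composition argument simply unpacks what the paper leaves implicit in the citation.
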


\begin{proof}
The is proof is \textit{mutatis mutandis} the same as that of \cite[Lem 2.3]{tanaka_2018}, one need only check that $r^*$ in loc. cit. is equivariant.

\[\begin{tikzcd}
	{X^{J_m}} && {X^{J_{m+1}}} \\
	& {X \times X}
	\arrow["{r^*}", from=1-1, to=1-3]
	\arrow["{q_m}"', from=1-1, to=2-2]
	\arrow["{q_{m+1}}", from=1-3, to=2-2]
\end{tikzcd}\]
We see that $r^*$ is equivariant from the equalities
\begin{align*}
    (r^*(g \cdot \alpha))(t) = ((g \cdot \alpha) \circ r)(t) = (g \cdot \alpha)(r(t)) = g\alpha(r(t)) = g(\alpha \circ r)(t) = g(r^*(\alpha))(t)
\end{align*}
completing the proof.
\end{proof}

Now that we have a well-defined, decreasing indexed sequence of invariants, we may define a minimum invariant of the sequence.

\begin{definition}
 Let $X$ be a finite $T_0$ space. Due to lemma~\ref{lem:CCWellDefined}, we may now define the {\it equivariant combinatorial complexity} of $X$ to be
 \[
 \CCC_G(X) := \lim_{m \to \infty} \CCC_{G, m}(X) = \min_{m \geq 1} \CCC_{G, m}(X)
 \]
\end{definition}

In order to show that $\TC_G(P) = \CCC_G(P)$, we give a summary result about the universal property of the compact-open topology as it applies to equivariant maps. Since we want results for both combinatorial paths and topological paths, we use an arbitrary locally compact space $K$ that we may substitute for either $I$ or $J_m$ to retrieve our results about both types of path.

It is also necessary to fix some notation for the following lemma. If $X$ and $Y$ are $G$-spaces and $X$ is locally compact, we denote by $\overline{Y^X}$ the set of continuous, equivariant maps $X \to Y$, and we equip it with the subspace topology inherited from $Y^X$. 

\begin{lemma}\label{lem:GUniversal}
Suppose $X, Y,$ and $K$ are $G$-spaces, and $X$ and $K$ are locally compact, where $G$ acts trivially on $K$ and pointwise on $Y^K$. Then the set of continuous, equivariant maps $X \times K \to Y$ is in one-to-one correspondence with the set of continuous, equivariant maps $X \to Y^K$. These sets are also in one-to-one correspondence to the set of continuous maps $K \to \overline{Y^X}$. The correspondence is such that if $f: X \times K \to Y$, $g: X \to Y^K$ and $h: K \to \overline{Y^X}$ all correspond to each other, then 
\[
    f(x, k) = [g(x)](k) = [h(k)](x)
\]
for each $x \in X$ and $k \in K$. 
\end{lemma}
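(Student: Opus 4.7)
The plan is to deduce the equivariant statement from the classical exponential law for the compact-open topology, and then check that the equivariance conditions on each side of the standard bijection match up. Recall that whenever $K$ is locally compact Hausdorff, the classical exponential law gives a homeomorphism sending a continuous map $f: X \times K \to Y$ to the continuous map $\hat{f}: X \to Y^{K}$ defined by $\hat{f}(x)(k) = f(x,k)$; similarly, when $X$ is locally compact Hausdorff we get $\check{f}: K \to Y^{X}$ with $\check{f}(k)(x) = f(x,k)$. Since both $X$ and $K$ are assumed locally compact, both of these non-equivariant bijections are available, and our task reduces entirely to identifying the subsets cut out by the various equivariance conditions.

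For the correspondence between continuous equivariant maps $X\times K\to Y$ and continuous equivariant maps $X\to Y^K$, I would unpack the definitions as follows. Because $G$ acts trivially on $K$ and diagonally on $X\times K$, equivariance of $f$ is equivalent to
\[
f(g\cdot x, k) \;=\; g\cdot f(x,k) \qquad \text{for all } g\in G,\ x\in X,\ k\in K.
\]
On the other hand, since $G$ acts pointwise on $Y^K$, equivariance of $\hat{f}$ reads $\hat{f}(g\cdot x) = g\cdot \hat{f}(x)$ in $Y^K$, which after evaluating at $k\in K$ is literally the same identity. Hence the classical bijection restricts to a bijection between the relevant equivariant subsets.

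For the correspondence between continuous equivariant maps $X\times K\to Y$ and continuous maps $K\to \overline{Y^X}$, I would observe that for each fixed $k\in K$ the map $\check{f}(k):X\to Y$ is equivariant precisely when $\check f(k)(g\cdot x) = g\cdot \check f(k)(x)$, i.e.\ $f(g\cdot x, k) = g\cdot f(x,k)$, which is exactly the equivariance of $f$. Thus $\check{f}$ lands in the subset $\overline{Y^X}\subseteq Y^X$ if and only if $f$ is equivariant; continuity of $\check{f}$ as a map into $\overline{Y^X}$ is automatic from continuity into $Y^X$ since $\overline{Y^X}$ carries the subspace topology. Combining the two restricted bijections yields a three-way correspondence with the required formula $f(x,k) = [g(x)](k) = [h(k)](x)$.

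The main obstacle here is really just bookkeeping: making sure that the trivial $G$-action on $K$, the pointwise $G$-action on $Y^K$, and the subspace topology (and pointwise $G$-action) on $\overline{Y^X}\subseteq Y^X$ are used consistently so that each equivariance condition can be read off both sides of the classical adjunction. No new topological input is needed beyond the standard exponential law, which applies because $X$ and $K$ are locally compact.
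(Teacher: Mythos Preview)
Your argument is correct and is the natural one: restrict the classical exponential adjunction to the equivariant subsets and check that the equivariance conditions match on both sides. There is nothing to compare against, however, since the paper states this lemma as a ``summary result'' without proof; your write-up would fill that gap perfectly well. One small remark: you invoke the exponential law under the hypothesis ``locally compact Hausdorff,'' but the lemma as stated only assumes locally compact, and in the paper's intended applications the spaces $X$ are finite $T_0$ (hence non-Hausdorff). This is harmless in practice---finite (Alexandroff) spaces are exponentiable, and the paper already records that the compact-open topology on $C(X,Y)$ agrees with the pointwise-order topology---but you may want to phrase the appeal to the exponential law accordingly rather than adding a Hausdorff hypothesis that is neither assumed nor satisfied.
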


Equipped with this lemma, we can prove our desired result about the equality of Coleman and Grant's equivariant topological complexity and our equivariant combinatorial complexity. 

\begin{theorem} \label{thm:tc_g=cc_g}
For any finite $G$-space $P$,
\[
  \TC_G(P) = \CCC_G(P).
\]

\begin{proof}
The arguments in this proof follow closely to \cite[Thm 3.2]{tanaka_2018} except we verify that the relevant maps are equivariant at each step. First we show that $\CCC_G(P) \geq \TC_G(P)$. Assume that $\TC_G(P) = n$ with an open, invariant cover $\{Q_i\}_{i=1}^n$ of $P \times P$ and a continuous $G$-section $Q_i \to P^I$ for each $i$. We apply \ref{lem:GUniversal} to get a continuous map $I \to \overline{P^{Q_i}}$ (where $\overline{P^{Q_i}}$ denotes the space of continuous, equivariant maps $Q_i \to P$). So we get $J_m \to \overline{P^{Q_i}}$ for some $m \geq 0$. Applying \ref{lem:GUniversal} again, we arrive at our combinatorial $G$-section $Q_i \to P^{J_m}$. Since every $Q_i$ admits a combinatorial $G$-section, we have that $\CCC_G(P) \leq n$. 

Now we show the reverse inequality. Assume that $\CCC_G(P) = n$. Then there exists an open, invariant cover $\{Q_i\}_{i=1}^n$ of $P \times P$ and continuous $G$-section $s_i: Q_i \to P^I$ of $q_m$ for each $i$ and some $m \geq 0$. Let $\alpha_m: [0, m] \to J_m$ be defined as in loc. cit. for $k = 0, 1, \dots$. This map is known to be continuous, and it preserves both ends. Define $\beta: I \to J_m$ by $\beta(t) := \alpha(mt)$. This induces a continuous map $\beta^*: P^{J_m} \to P^I$ given by $\beta^*(\gamma) = \gamma \circ \beta$. In particular, $\beta^*$ is equivariant, since for any $g \in G$ and $t \in I$,
\begin{align*}
g(\beta^*(\gamma))(t) &= g(\gamma \circ \beta)(t) = g\gamma(\beta(t)) = (g \cdot \gamma)(\beta(t))\\
&= ((g \cdot \gamma) \circ \beta)(t) = (\beta^*(g \cdot \gamma))(t)
\end{align*}
From this point the proof follows exactly as in \cite[Thm3.2]{tanaka_2018}.
\end{proof}
\end{theorem}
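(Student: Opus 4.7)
The plan is to establish $\TC_G(P) = \CCC_G(P)$ by proving the two inequalities separately, passing between topological and combinatorial sections using the universal property of the compact-open topology for equivariant maps (Lemma~\ref{lem:GUniversal}).

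For the direction $\TC_G(P) \leq \CCC_G(P)$, suppose $\CCC_G(P) = n$ is realized on an open invariant cover $\{U_i\}_{i=1}^n$ of $P \times P$ by equivariant combinatorial sections $s_i: U_i \to P^{J_m}$ of $q_m$, for some common $m$ obtained after enlarging via Lemma~\ref{lem:CCWellDefined}. The idea is to convert each combinatorial path into a topological one using the surjective weak equivalence $f: I \to J_m$ recalled earlier in the paper. Precomposition with $f$ gives a continuous map $f^*: P^{J_m} \to P^I$, $\gamma \mapsto \gamma \circ f$, which is automatically equivariant since $G$ acts trivially on both $I$ and $J_m$. Because $f$ preserves endpoints, the composite $f^* \circ s_i: U_i \to P^I$ is an equivariant continuous section of the topological path fibration $\pi$ over $U_i$, giving $\TC_G(P) \leq n$.

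For the opposite direction $\CCC_G(P) \leq \TC_G(P)$, suppose $\TC_G(P) = n$ is realized by equivariant topological sections $s_i: U_i \to P^I$. The first step is to apply Lemma~\ref{lem:GUniversal} to reinterpret each $s_i$ as a continuous map $\tilde{s}_i: I \to \overline{P^{U_i}}$, where $\overline{P^{U_i}}$ denotes the finite $T_0$ space of continuous equivariant maps $U_i \to P$. The key technical step is then to show that any continuous path from $I$ into a finite $T_0$ space factors through some combinatorial interval $J_{m_i}$; concretely, the preimages of the downwards closures in $\overline{P^{U_i}}$ cut $I$ into finitely many open intervals on which $\tilde{s}_i$ realizes a zigzag pattern, which can be encoded as a combinatorial path $J_{m_i} \to \overline{P^{U_i}}$. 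Applying Lemma~\ref{lem:GUniversal} in reverse transports this back to an equivariant combinatorial section $U_i \to P^{J_{m_i}}$. Taking $m := \max_i m_i$ and invoking Lemma~\ref{lem:CCWellDefined} to stabilize yields $\CCC_G(P) \leq n$.

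The main obstacle is the discretization step in the second direction. It requires a careful analysis of how a continuous path into a finite $T_0$ space stratifies $I$ according to the principal open neighborhoods, together with a choice of reparameterization that produces a bona fide monotone map $J_{m_i} \to \overline{P^{U_i}}$ with matching endpoints. Fortunately, equivariance is automatic because we work throughout with the space $\overline{P^{U_i}}$ of \emph{equivariant} maps rather than the full mapping space, and because $G$ acts trivially on $I$ and $J_m$; the remaining work consists of routine verifications that the currying and uncurrying of Lemma~\ref{lem:GUniversal} respect the $G$-actions and the endpoint conditions imposed by the fibrations $\pi$ and $q_m$.
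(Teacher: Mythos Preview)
Your proposal is correct and follows essentially the same approach as the paper's proof: one inequality uses precomposition with a continuous endpoint-preserving surjection $I\to J_m$ (you use the weak equivalence $f$, the paper uses an explicit $\beta$ built from Tanaka's $\alpha_m$), and the other inequality uses Lemma~\ref{lem:GUniversal} to curry a topological $G$-section into a path $I\to\overline{P^{U_i}}$, discretizes this path through a combinatorial interval, and uncurries. Your write-up is in fact somewhat more explicit than the paper's about the discretization step and about stabilizing the $m_i$ to a common $m$ via Lemma~\ref{lem:CCWellDefined}.
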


We provide a bound on the number $n$ such that $\CCC_{G, n}(X) = \CCC_G(X)$. This bound only uses the equivariance of the maps. Perhaps one could achieve a better bound once continuity is taken into account.

\begin{lemma}\label{lem:EquiMapsNumber}
Let $A$ and $B$ be finite {\it sets} and let $G$ act on $A$ and $B$. Then the number of equivariant maps $f: A \to B$ is at most $\#(B)^{\#(A / G)}$.
\end{lemma}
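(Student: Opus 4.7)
The plan is to exploit the fact that an equivariant map is rigidly determined by its values on a single representative from each $G$-orbit of $A$. First I would choose a set of orbit representatives $R \subseteq A$, so that every element of $A$ can be written as $g\cdot r$ for some $r \in R$ and some $g \in G$, and $\#(R) = \#(A/G)$. This is possible because $A$ is a finite set on which $G$ acts, so its orbits partition $A$ and we can pick one element from each.

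Next I would note that for any equivariant map $f: A \to B$ and any $a = g\cdot r \in A$, equivariance forces
\[
f(a) \;=\; f(g\cdot r) \;=\; g\cdot f(r),
\]
so $f$ is completely determined by the tuple $\bigl(f(r)\bigr)_{r \in R} \in B^{R}$. In other words, the restriction map
\[
\rho: \{\text{equivariant maps } A \to B\} \longrightarrow B^{R}, \qquad f \longmapsto \bigl(f(r)\bigr)_{r \in R},
\]
is injective.

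From the injectivity of $\rho$ the bound follows immediately, since
\[
\#\{\text{equivariant maps } A \to B\} \;\leq\; \#\bigl(B^{R}\bigr) \;=\; \#(B)^{\#(R)} \;=\; \#(B)^{\#(A/G)}.
\]

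There is no genuine obstacle here; the only thing to be careful about is that one should not try to prove equality. The image of $\rho$ is strictly smaller in general, because a tuple $(b_r)_{r \in R}$ extends to a well-defined equivariant map exactly when $b_r \in B^{G_r}$ for each $r$ (so that the formula $f(g\cdot r) := g\cdot b_r$ does not depend on the choice of $g$ modulo the stabilizer $G_r$). For the purposes of the stated upper bound we can afford to forget this constraint, which is why the weaker bound $\#(B)^{\#(A/G)}$ suffices.
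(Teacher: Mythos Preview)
Your argument is correct and is essentially identical to the paper's own proof: both choose a set of orbit representatives and observe that an equivariant map is uniquely determined by its values on those representatives, giving the bound $\#(B)^{\#(A/G)}$. Your additional remark about why equality need not hold is a nice elaboration but is not needed for the lemma as stated.
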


\begin{proof}
 We need only note that each equivariant map is uniquely determined by the
induced map on a chosen set of orbit representatives. The number of such maps
 $A \to B$ is at most $\#(B)^{\#(A / G)}$. 
\end{proof}


\begin{proposition}\label{prop:CCpathlength}
Let $X$ be a finite $T_0$ $G$-space and let $N = \#(X)^{\#(X \times X / G)}$. Then $\CCC_G(X) = \CCC_{G, N}(X)$.
\end{proposition}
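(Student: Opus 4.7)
The plan is to show $\CCC_{G,N}(X) \leq \CCC_G(X)$, since the reverse inequality $\CCC_G(X) \leq \CCC_{G,N}(X)$ is automatic from the definition of $\CCC_G$ as the minimum of the sequence. By Lemma~\ref{lem:CCWellDefined} this sequence of non-negative integers is weakly decreasing, so its minimum is attained: fix $m$ with $\CCC_{G,m}(X) = \CCC_G(X) =: k$, together with an invariant open cover $\{Q_i\}_{i=1}^k$ of $X \times X$ and equivariant sections $s_i \colon Q_i \to X^{J_m}$ of the combinatorial path fibration $q_m$.

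The key move is to reinterpret each $s_i$ as a combinatorial path in a small auxiliary finite $T_0$ space. Applying Lemma~\ref{lem:GUniversal} with $K = J_m$ and $Y = X$, I would convert $s_i$ into a continuous map $\tilde s_i \colon J_m \to \overline{X^{Q_i}}$, where $\overline{X^{Q_i}}$ denotes the finite $T_0$ space of continuous equivariant maps $Q_i \to X$ under the pointwise order. Because $s_i$ is a section, the endpoints $\tilde s_i(0)$ and $\tilde s_i(m)$ are forced to be the coordinate projections $\pi_1|_{Q_i}$ and $\pi_2|_{Q_i}$ respectively. Lemma~\ref{lem:EquiMapsNumber} gives the bound
\[
\#\bigl(\overline{X^{Q_i}}\bigr) \leq \#(X)^{\#(Q_i / G)} \leq \#(X)^{\#((X \times X)/G)} = N,
\]
using that $Q_i$ is a $G$-invariant subspace of $X \times X$.

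Next, I would shorten the path $\tilde s_i$. It exhibits $\pi_1|_{Q_i}$ and $\pi_2|_{Q_i}$ as lying in a common path-component $C_i \subseteq \overline{X^{Q_i}}$, and $\#(C_i) \leq N$. Applying Proposition~\ref{prop:PathLength} to the finite $T_0$ space $C_i$ yields a combinatorial path $\tilde s_i' \colon J_{\#(C_i)} \to C_i$ with the required endpoints. Concatenating with the reparametrization maps $r^*$ from the proof of Lemma~\ref{lem:CCWellDefined} extends this to a combinatorial path $J_N \to \overline{X^{Q_i}}$ without altering endpoints, and applying Lemma~\ref{lem:GUniversal} in reverse produces an equivariant continuous section $s_i' \colon Q_i \to X^{J_N}$ of $q_N$. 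The original cover $\{Q_i\}_{i=1}^k$ then witnesses $\CCC_{G,N}(X) \leq k = \CCC_G(X)$.

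The main obstacle here is really just the cardinality bookkeeping together with verifying equivariance is preserved at each step: one must confirm that $\overline{X^{Q_i}}$ genuinely is a finite $T_0$ space whose topology matches the subspace topology from $X^{Q_i}$, that the projections $\pi_1|_{Q_i}, \pi_2|_{Q_i}$ are themselves equivariant (so they lie in $\overline{X^{Q_i}}$), and that the extension by $r^*$ stays equivariant (already verified in the proof of Lemma~\ref{lem:CCWellDefined}). Once these are in place, the argument is essentially an equivariant refinement of Tanaka's non-equivariant bound in \cite{tanaka_2018}, with the orbit-counting improvement supplied by Lemma~\ref{lem:EquiMapsNumber}.
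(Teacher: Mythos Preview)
Your proposal is correct and follows essentially the same route as the paper: convert the section via Lemma~\ref{lem:GUniversal} into a combinatorial path in $\overline{X^{Q_i}}$, bound the cardinality of that space by $N$ using Lemma~\ref{lem:EquiMapsNumber}, shorten the path via Proposition~\ref{prop:PathLength}, and convert back. Your version is in fact more explicit than the paper's about the path-component $C_i$, the extension step via $r^*$, and the equivariance checks, but the architecture is identical.
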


\begin{proof}
We immediately get that that $\CCC_G(X) \leq \CCC_{G, N}(X)$. Now, we show the opposite inequality. Let $U \subseteq X \times X$ be an open, invariant set admitting an equivariant section $s: U \to X^{J_m}$ of the path fibration $q_m$. We will show that it also admits a section of the path fibration $q_N$.

Given $s: U \to X^{J_m}$, there exists a path $s_2: J_m \to \overline{X^U}$ by Lemma~\ref{lem:GUniversal} and  with the procedure from Proposition~\ref{prop:PathLength}. Using Lemma~\ref{lem:EquiMapsNumber}, we may change the path $s_2$ into a path $s_3: J_N \to \overline{X^U}$, while preserving the endpoints. From here, we may apply Lemma~\ref{lem:GUniversal} again to turn $s_3$ into a map $s_4: U \to X^{J_N}$. The reader may calculate that
\begin{align*}
i_U(u) &= (q_m \circ s_1)(u) = (q_N \circ s_4)(u)
\end{align*}

Since $i_U = q_N \circ s_4$, it follows that $U$ is a $G$-sectional set for $q_N$. Thus, $\CCC_{G, N}(X) \leq \CCC_G(X)$.
\end{proof}

\section{Equivariant Simplicial Complexity}\label{sec:SC_G}

\subsection{Group Actions on Simplicial Complexes} 

Here, we review some results about ordered $G$-simplicial complexes and equivariant simplicial approximations of equivariant simplicial maps. 

\begin{definition}
If $K$ is a simplicial complex with a vertex set $V$ and there is a finite group $G$ that acts on $V$ such that
\begin{itemize}
    \item The map $g \cdot (-): V \to V$ is a map of simplicial complexes for any $g \in G$
    \item If $e \in G$ is the identity, then $e \cdot v = v$ for every $v \in V$
    \item For any $g, h \in G$ and any $v \in V$, the equality $h \cdot (g \cdot v) = (hg) \cdot v$ holds.
\end{itemize}
Then $K$ is a \textit{$G$-Simplicial Complex}. It follows that $|K|$ is a $G$-space as well.
\end{definition}

When we consider simplicial complexes equipped with a simplicial $G$-action ($G$-simplicial complexes), it becomes necessary to add structure to make the fixed-points well-behaved.


\begin{definition}
Given a simplicial complex $K$ equipped with a partial order on its vertex set, such that every simplex is totally ordered and a group $G$ acting on an ordered simplicial complex $K$ such that $g \cdot (-): K \to K$ is both monotone and a map of simplicial complexes for every $g \in G$, then $K$ is called an {\it ordered $G$-simplicial complex}. 
\end{definition}

As we shall see later, it is always possible to approximate equivariant maps into the realization of an ordered $G$-simplicial complex.

\begin{remark}
If $P$ is a finite poset, then $\scr{K}(P)$ is an ordered simplicial complex. If $P$ is a finite $G$-poset, then $\scr{K}(P)$ is an ordered $G$-simplicial complex. Similarly, for any simplicial complex $K$, $\sd^n(K)$ is an ordered simplicial complex.
\end{remark} 

Now that we have added the necessary structure to our simplicial complexes, we must define a notion of equivariance on maps between them.

\begin{definition}
We call a monotone simplicial map $\phi: K \rightarrow L$ between ordered $G$-simplicial complexes \emph{equivariant} if, for any $\sigma \in \Sim(K)$ and $g \in G$, we have $\phi(g \cdot \sigma) = g \cdot \phi(\sigma)$. The realization of such a map is an equivariant map of $G$-spaces.
\end{definition}

As in the non-equivariant setting, it is possible to approximate homotopies using the combinatorics of simplicial maps.

\begin{definition}
Given two equivariant simplicial maps $\phi, \psi :K \to L$, say that they are \textit{equivariantly $c$-contiguous} if they are $c$-contiguous via a chain
\[
\phi = \phi_0, \phi_1, \dots, \phi_c = \psi
\]
where each $\phi_i$ is equivariant.
\end{definition}

We also introduce an equivariant form of simplicial approximations.

\begin{definition} 
For two ordered $G$-simplicial complexes, $K$ and $L$, and an equivariant map $f: |K| \to |L|$, an \textit{equivariant simplicial approximation} of $f$ is an equivariant monotone simplicial map $\phi: K \to L$ which is also a simplicial approximation of $f$.
\end{definition}

We focus on the case of ordered simplicial complexes, as this is the setting where simplicial approximation works easily.

\begin{example}\label{ex:Tidapprox}
Recall the equivariant map $\tau: \sd(X) \to X$ from Lemma~\ref{lem:EquivarT}. If $X$ is a simplicial complex, then $\tau$ is a simplicial map. For an ordered $G$-simplicial complex, $\tau$ is equivariant. For any simplicial complex $K$, it is easy to check that $\tau: \sd(K) \to K$ is also an equivariant approximation of the identity. 
\end{example}
By the above example, there is always a simplicial approximation of the identity from $\sd(K)$ to $K$. This fact will be important later on. Now we explore the properties of equivariant contiguity.

\begin{proposition}\label{prop:ContiguitySummary}
The following statements about equivariant contiguity hold.
\begin{enumerate}
    \item Two equivariant simplicial approximations to the same continuous map are  equivariantly $1$-contiguous.
    \item Let $K$ be a $G$-simplicial complex, and $L$ an ordered $G$-simplicial complex. 
    \item Let $f,p: |K|\to |L|$ be equivariant continuous maps which are equivariantly homotopic. Then there is an $n>0$ and simplicial approximations $\phi,\psi:\sd^n(K)\to L$ of $f$ and $p$, respectively, such that $\phi$ is equivariantly contiguous to $\psi$. 
\end{enumerate}
\end{proposition}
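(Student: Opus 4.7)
The plan is to reduce each substantive claim to its non-equivariant counterpart from Theorem~\ref{thm:contiguityfacts}, checking equivariance at every step. Part (1) is essentially immediate: if $\phi,\psi: K\to L$ are equivariant approximations of the same continuous map, then by the non-equivariant case $\phi(\sigma)\cup\psi(\sigma)\in\Sim(L)$ for every $\sigma\in\Sim(K)$, so $\phi$ and $\psi$ are $1$-contiguous; since being equivariantly $1$-contiguous requires no intermediate maps, the equivariance of $\phi$ and $\psi$ themselves suffices. If part (2) is meant to assert that equivariantly contiguous simplicial maps have $G$-homotopic realizations, this follows because the straight-line homotopy between the realizations of two $1$-contiguous maps is defined by barycentric coordinates and is automatically $G$-equivariant when the two maps are.

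The substantive content is part (3). My approach is to apply an equivariant simplicial approximation theorem to the equivariant homotopy $H: |K|\times I\to |L|$, viewed as a $G$-map where $G$ acts trivially on $I$. First, triangulate $|K|\times I$ using the standard prism decomposition on each $\sigma\times I$; since $G$ acts trivially on $I$, the resulting simplicial structure on $K\times\Delta^1$ is a $G$-simplicial complex and its iterated subdivisions are ordered. After sufficiently many subdivisions, one obtains an equivariant simplicial approximation $\Phi: \sd^n(K\times\Delta^1)\to L$ of $H$. Restricting $\Phi$ to the top and bottom of the cylinder gives equivariant simplicial approximations $\phi,\psi: \sd^n(K)\to L$ of $f$ and $p$ respectively. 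The prism decomposition of each $\sigma\times\Delta^1$ into $\dim(\sigma)+1$ top-dimensional simplices then yields a finite chain of simplicial maps $\phi=\phi_0,\phi_1,\ldots,\phi_c=\psi$ in which each consecutive pair is $1$-contiguous; equivariance of every $\phi_i$ follows from the equivariance of $\Phi$.

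The main obstacle is the equivariant simplicial approximation theorem itself: given an equivariant continuous map $f: |M|\to |L|$ with $M$ a $G$-simplicial complex and $L$ an ordered $G$-simplicial complex, sufficiently many subdivisions of $M$ admit an equivariant simplicial approximation of $f$. The standard argument chooses, for each vertex $v\in\sd^n(M)$, an element of $L$ in the open-star carrier of $f(v)$. To make this choice $G$-equivariant, I would fix orbit representatives, define the approximation on each, and then extend by the action. Well-definedness for a representative $v$ with stabilizer $H$ reduces to requiring $\phi(v)\in L^H$; this is available because equivariance of $f$ forces $f(v)\in|L|^H$, and in an ordered $G$-simplicial complex the action of $H$ on any simplex it stabilizes must preserve the total order and hence fixes each of its vertices, so the carrier of $f(v)$ already lies in $L^H$ and any of its vertices may be selected.
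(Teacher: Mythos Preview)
The paper does not actually supply a proof of this proposition; it is stated and immediately followed by Corollary~\ref{cor:GContiguity} with no intervening proof environment. Your proposal therefore goes well beyond what the paper provides.

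Your treatment of part (1) is correct and complete. Your reading of item (2) as a separate claim is a misinterpretation: in the paper it is merely a standing hypothesis for item (3), not an assertion in its own right (the enumeration is awkwardly written). Your parenthetical argument for the $G$-homotopy of realizations of equivariantly contiguous maps is nonetheless correct and is used implicitly later in the paper.

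For part (3), your strategy is sound and you have correctly isolated the crucial point: the equivariant simplicial approximation theorem, and specifically the fact that in an \emph{ordered} $G$-simplicial complex, any group element stabilizing a simplex must fix each of its vertices (since a monotone self-bijection of a finite chain is the identity). This is exactly why the ordered hypothesis on $L$ appears, and your argument for choosing $\phi(v)\in L^{H}$ when $v$ has stabilizer $H$ is right. One technical point deserves a little more care: after subdividing the prism $K\times\Delta^{1}$ $n$ times to obtain $\Phi$, the top and bottom faces are indeed copies of $\sd^{n}(K)$, but the intermediate ``slices'' of $\sd^{n}(K\times\Delta^{1})$ are no longer simply products, so extracting the chain $\phi_{0},\ldots,\phi_{c}$ directly from the prism simplices is not as immediate as in the unsubdivided case. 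The cleaner route is to first choose a fine partition $0=t_{0}<\cdots<t_{m}=1$ and an $n$ (via a Lebesgue-number argument on the open-star cover pulled back by $H$) so that each $H_{t_{j}}$ admits an equivariant approximation $\phi_{j}:\sd^{n}(K)\to L$ and consecutive $\phi_{j},\phi_{j+1}$ are equivariantly $1$-contiguous; equivariance of each $\phi_{j}$ follows from your orbit-representative construction applied to $H_{t_{j}}$. This is a routine adjustment, not a gap in your overall approach.
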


\begin{corollary}\label{cor:GContiguity}
Let $K$ be a $G$-simplicial complex, and $L$ an ordered $G$-simplicial complex. 
Let $f,p: |K|\to |L|$ be equivariant continuous maps which are equivariantly homotopic. Then there is an $n_0>0$ such that, for each $n\ge n_0$, any pair of simplicial approximations $\phi,\psi:\sd^n(K)\to L$ of $f$ and $p$, respectively, are equivariantly contiguous. 
\end{corollary}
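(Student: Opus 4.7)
The plan is to bootstrap from part~(3) of Proposition~\ref{prop:ContiguitySummary}, which guarantees a \emph{single} pair of equivariantly contiguous approximations at some level $n_0$, up to the claim that \emph{any} pair works at every level $n \ge n_0$. The bridge between these two statements will be the last-vertex map $\tau$ and part~(1) of the same proposition.

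First I would apply Proposition~\ref{prop:ContiguitySummary}~(3) to the equivariantly homotopic maps $f, p\colon |K|\to |L|$ to obtain an integer $n_0$ together with specific equivariant simplicial approximations $\phi_0\colon \sd^{n_0}(K)\to L$ of $f$ and $\psi_0\colon \sd^{n_0}(K)\to L$ of $p$ which are equivariantly $c$-contiguous via some chain $\phi_0 = \alpha_0, \alpha_1,\ldots,\alpha_c = \psi_0$. This $n_0$ is the threshold claimed in the corollary.

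Next, fix $n \ge n_0$ and let $\phi, \psi\colon \sd^n(K)\to L$ be any equivariant simplicial approximations of $f$ and $p$. Writing $\tau^{(k)}\colon \sd^{n_0 + k}(K)\to \sd^{n_0+k-1}(K)$ for the last-vertex map at the appropriate level, Lemma~\ref{lem:EquivarT} shows each $\tau^{(k)}$ is equivariant, and the iterated composition
\[
T := \tau^{(1)}\circ\tau^{(2)}\circ\cdots\circ\tau^{(n-n_0)}\colon \sd^n(K)\longrightarrow \sd^{n_0}(K)
\]
is an equivariant simplicial map. By Example~\ref{ex:Tidapprox} each factor is an equivariant simplicial approximation of the identity, so $T$ itself approximates the identity (approximations compose). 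Consequently $\phi_0\circ T$ and $\psi_0\circ T$ are equivariant simplicial approximations of $f$ and $p$ at level $n$, respectively. Moreover, post-composing the chain $\alpha_0,\alpha_1,\ldots,\alpha_c$ with $T$ yields an equivariant contiguity
\[
\phi_0\circ T = \alpha_0\circ T,\; \alpha_1\circ T,\; \ldots,\; \alpha_c\circ T = \psi_0\circ T,
\]
since if $\alpha_i(\sigma)\cup\alpha_{i+1}(\sigma)$ is a simplex of $L$ for every simplex $\sigma$ of $\sd^{n_0}(K)$, then the same holds after replacing $\sigma$ by $T(\sigma')$ for a simplex $\sigma'$ of $\sd^n(K)$, and each $\alpha_i\circ T$ is equivariant as a composition of equivariant maps.

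Finally, since $\phi$ and $\phi_0\circ T$ are two equivariant simplicial approximations of the same map $f$, part~(1) of Proposition~\ref{prop:ContiguitySummary} gives an equivariant $1$-contiguity between them; the analogous statement holds for $\psi$ and $\psi_0\circ T$. Concatenating the three chains
\[
\phi \;\leftrightsquigarrow\; \phi_0\circ T \;\leftrightsquigarrow\; \psi_0\circ T \;\leftrightsquigarrow\; \psi
\]
produces an equivariant $(c+2)$-contiguity from $\phi$ to $\psi$, which is the desired conclusion. The only subtle point, and the one I would check most carefully, is that composing a contiguity chain on the right with the equivariant map $T$ preserves both the contiguity relation and equivariance at each stage; once this mechanical verification is dispatched, the corollary is an immediate consequence of parts~(1) and~(3) of the preceding proposition.
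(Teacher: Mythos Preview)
Your argument is correct and is precisely the intended derivation: the paper states this as a corollary of Proposition~\ref{prop:ContiguitySummary} without supplying a proof, and the natural way to extract it is exactly what you do---apply part~(3) to get a base level $n_0$ with one contiguous pair, propagate to higher levels via the equivariant last-vertex map $T$, and then invoke part~(1) to pass from the specific approximations $\phi_0\circ T,\psi_0\circ T$ to arbitrary ones. The only point worth noting is that your appeal to Example~\ref{ex:Tidapprox} and Lemma~\ref{lem:EquivarT} for the equivariance of each $\tau^{(k)}$ is legitimate because the targets $\sd^{n_0+k-1}(K)$ with $n_0\ge 1$ are all ordered $G$-simplicial complexes, so the last-vertex map is well-defined and equivariant at every stage.
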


Now we recall some useful constructions on the realization of a simplicial complex.

\begin{definition}
Let $K$ be a simplicial complex and $S \in \Sim(K)$ be an $n$-simplex of $K$. The \emph{open simplex} $S^{\circ} \in |K|$ is defined by:
$$
S^{\circ} := \left \{ x \in \RR^{n + 1} \mid \sum_{i = 1}^{n + 1} x_i = 1 \text{ and } 0< x_i < 1 \, \forall i = 1, \dots, n+1 \right \}.
$$
\end{definition}

\begin{definition}
Let $K$ be a simplicial complex and let $x \in K$. We define the \emph{star of x} to be 
$$
\star(x) := \bigcup_{S \in \Sim(K) \mid x \in S} S^{\circ} \subseteq |K|.
$$
\end{definition}

We now establish an analogue of Proposition~\ref{prop:subdivisionContract} in the simplicial setting. As before, this allows us to bound the topological complexity from above.

\begin{lemma}\label{lem:starOpen}
Let $K$ be a simplicial complex and let $x \in K$. Then $\star(x)$ is an open subset of $|K|$.
\end{lemma}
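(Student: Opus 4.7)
The plan is to identify $\star(x)$ with the set of points in $|K|$ whose $x$-coordinate (in the ambient $\RR^{\#(X)}$) is strictly positive, and then deduce openness from continuity of the coordinate projection. Concretely, let $\pi_x: \RR^{\#(X)} \to \RR$ denote projection onto the coordinate indexed by $x$. Restricted to $|K|$, this is continuous, so the preimage of the open set $(0, \infty) \subseteq \RR$ is open in $|K|$.

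The key step, then, is to show that
\[
\star(x) = \pi_x^{-1}((0,\infty)) \cap |K| = \{\Lambda \in |K| \mid \lambda_x > 0\}.
\]
For the forward inclusion, suppose $\Lambda \in \star(x)$. Then $\Lambda \in S^{\circ}$ for some simplex $S$ with $x \in S$; by definition of the open simplex, every coordinate indexed by a vertex of $S$ is strictly positive, so in particular $\lambda_x > 0$. For the reverse inclusion, suppose $\Lambda \in |K|$ and $\lambda_x > 0$. The support $S_\Lambda = \{y \mid \lambda_y \neq 0\}$ is a simplex of $K$ (by definition of $|K|$) containing $x$, and $\Lambda$ lies in its open simplex $S_\Lambda^{\circ}$, so $\Lambda \in \star(x)$.

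I don't anticipate a serious obstacle here: the only subtle point is confirming that each $\Lambda \in |K|$ lies in the open simplex of its support, which is immediate from the definition of $|K|$ given earlier (the support $S_\Lambda$ is required to be a simplex, and the nonzero coordinates are precisely those indexed by vertices of $S_\Lambda$). Once that observation is in hand, openness of $\star(x)$ is just the openness of a preimage under a continuous projection.
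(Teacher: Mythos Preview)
Your argument is correct and is the standard one. Note that the paper states this lemma without proof, treating it as a well-known fact about simplicial complexes; your identification of $\star(x)$ with $\{\Lambda \in |K| : \lambda_x > 0\}$ and appeal to continuity of the coordinate projection is exactly the usual justification.
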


A common concern is understanding how open sets of geometric realizations of simplicial complexes are acted upon by the group action. By restricting our examination to geometric realizations of order complexes of finite $G$-posets we greatly restrict the behaviour of the group actions. In particular, an element of an open simplex $\sigma^\circ$ is sent to itself by a group element if and only if each element of the realization $|\sigma|$ of the entire simplex is sent to itself the group element.

\begin{proposition}\label{prop:starGCategorical}
Let $P$ be a finite $G$-poset and $K = \scr{K}(P)$ be the resulting ordered $G$-simplicial complex. Let $x \in K$. Then $G \cdot \star(x)$ is $G$-categorical.
\end{proposition}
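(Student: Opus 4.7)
The plan is to exhibit $G \cdot \star(x)$ as a \emph{disjoint} union of the stars of the orbit elements of $x$, and then define an equivariant straight-line deformation retract onto $\cal{O}(x)$. The heart of the argument is a disjointness claim: the stars $\star(gx)$, as $gx$ ranges over $\cal{O}(x)$, are pairwise disjoint in $|K|$.

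First, I would establish this disjointness using the order complex structure. Suppose for contradiction that $gx \neq hx$ both lie in a single simplex $S \in \Sim(\scr{K}(P))$. Then both belong to the chain of $P$ represented by $S$, and after translating by $g^{-1}$ the distinct elements $x$ and $kx$ (with $k = g^{-1}h$) lie in the chain $g^{-1}S$; without loss of generality $x < kx$. Monotonicity of $k$ then forces $kx < k^2 x$, and iterating yields a strictly increasing sequence $x < kx < k^2 x < \cdots$ in $P$. Since $k$ has finite order in $G$, some power gives $k^n x = x$, contradicting strict increase. Because open simplices corresponding to distinct simplices are disjoint, it follows that $\star(gx) \cap \star(hx) = \varnothing$ whenever $gx \neq hx$, so
$$G \cdot \star(x) \;=\; \bigsqcup_{gx \in \cal{O}(x)} \star(gx).$$

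Second, the set $G \cdot \star(x)$ is open by Lemma~\ref{lem:starOpen} and $G$-invariant by construction. I would then define the $G$-homotopy
$$H: G \cdot \star(x) \times I \to |K|, \qquad H(y, t) := (1-t)\, y + t\, gx \ \text{ for } y \in \star(gx).$$
Well-definedness follows from disjointness, and continuity on each piece is clear because the segment from $y$ to $gx$ lies inside the simplex containing $y$, which also contains $gx$ by definition of the star. The pasting lemma on the disjoint open cover then yields continuity of $H$ globally. At $t=0$, $H$ equals the inclusion; at $t=1$, the image lies entirely in $\cal{O}(x)$; and equivariance follows from the identity $H(hy, t) = (1-t) hy + t (hg) x = h \cdot H(y,t)$, using that $hy \in \star(hgx)$.

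The main obstacle is the disjointness step. Its proof genuinely depends on $K$ being an order complex (so that every simplex is a totally ordered chain in $P$) together with monotonicity of the $G$-action; without these hypotheses, two distinct points of an orbit could coexist in a single simplex, and the piecewise straight-line formula would become ambiguous. Once disjointness is in hand, the remainder is a direct equivariant upgrade of the classical straight-line retraction of a star onto its center vertex.
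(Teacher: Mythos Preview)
Your proof is correct and follows essentially the same strategy as the paper's: both construct the straight-line $G$-homotopy $H(y,t)=(1-t)\,y+t\cdot gx$ on $G\cdot\star(x)$ retracting onto $\cal{O}(x)$, the crux being that distinct orbit elements $gx\neq hx$ cannot lie in a common simplex of $\scr{K}(P)$. You in fact supply a more explicit justification for this step (the finite-order/monotonicity contradiction) than the paper, which simply asserts that the corresponding well-definedness of $H$ follows from monotonicity of the $G$-action on $P$.
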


\begin{proof}
Since $G$ acts monotonically on $P$ and since $K = \scr{K}(P)$, we have that $g \cdot (S^{\circ} \cup \{x\})$ is convex for all $S \in \Sim(K)$ such that $x \in S$ and for all $g \in G$. Since $P$ is a $G$-poset, if $y \in \star(x)$ then $g \cdot y \in g \cdot \star(x)$ for all $g \in G$. Then the following is a continuous function in $|K|$: 
$$
H : G \cdot \star(x) \times I \rightarrow |K|
$$
defined by $H(g \cdot y,t) = (g \cdot x)t + (g \cdot y)(1 - t)$. Since $G$ acts monotonically on the ordered poset $P$ and $x \in |\sigma|$ for some $\sigma \in \Sim(K)$, we have that if $g,h \in G$, $y,z \in \star(x)$, and $g \cdot y = h \cdot z$, then $g \cdot x = h \cdot x$. Therefore, if $g,h \in G$, $y,z \in \star(x)$, and $g \cdot y = h \cdot z$, then we have that $H(g \cdot y,t) = H(h \cdot z, t)$ for all $t \in I$. 

Since $\star(x)$ is an open subset of $|K|$, the restriction $H|_{G \cdot \star(x) \times \{0\}} = \iota_{G \cdot \star(x)}$ and the restriction $H|_{G \cdot \star(x) \times \{1\}}$ is a continuous function that maps $G \cdot \star(x)$ to $\cal O_x$.
\end{proof}

\subsection{G-Connectedness of Simplicial Complexes}

In this subsection we establish that the realization of an order complex of a $G$-poset $P$, denoted $|\scr K(P)|$, is $G$-connected if and only if the $G$-poset $P$ is $G$-connected. As a corollary to the previously stated result, we find that if $P$ is $G$-connected, then $\TC_G(|\scr K(P)|)$ is finite.

\begin{lemma}\label{lem:RealizationFixedPts}
Let $K$ be an ordered $G$-simplicial complex. Then $|K|^G = |K^G|$. 
\end{lemma}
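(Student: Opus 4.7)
The plan is to prove the two inclusions $|K^G| \subseteq |K|^G$ and $|K|^G \subseteq |K^G|$ separately, using the order structure essentially only in the second direction.

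For the easy inclusion $|K^G| \subseteq |K|^G$, I would take an arbitrary point $x = \sum_{v} \lambda_v \cdot v \in |K^G|$, so that its support $S_x$ is a simplex of $K^G$ and in particular every vertex $v$ appearing in $x$ satisfies $g \cdot v = v$ for all $g \in G$. Then $g \cdot x = \sum_v \lambda_v \cdot (g \cdot v) = \sum_v \lambda_v \cdot v = x$ for every $g \in G$, so $x \in |K|^G$. This step is purely formal and does not use the ordering.

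For the nontrivial inclusion $|K|^G \subseteq |K^G|$, I would take $x \in |K|^G$ with support $\sigma = S_x$ (so $\lambda_v > 0$ for every $v \in \sigma$). Since the support of a point in $|K|$ is determined by where it lies in open simplices, the equality $g \cdot x = x$ forces $g \cdot \sigma = \sigma$ as a simplex. The crucial ingredient, which is where the ordered hypothesis enters, is then the following observation: the simplex $\sigma$ is totally ordered, and $g \cdot (-)$ restricts to a monotone bijection of $\sigma$, since it is a monotone bijection of the vertex set of $K$ as $G$ acts by simplicial order-preserving isomorphisms. A monotone bijection of a finite totally ordered set is the identity, hence $g \cdot v = v$ for every $v \in \sigma$. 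Therefore $\sigma$ is a simplex of $K^G$, and $x \in |K^G|$.

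The main conceptual step — and the place where the assumption that $K$ is ordered is indispensable — is the observation that a fixed simplex under a monotone group action on a totally ordered set must be fixed pointwise. Without this, one could only conclude that $g$ permutes the vertices of $\sigma$, leaving room for barycenter-type fixed points whose support is not contained in $K^G$. Once this is in place, everything else is a direct unpacking of definitions, and no computation beyond the support argument above is needed.
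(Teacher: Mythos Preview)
Your proof is correct and follows essentially the same two-inclusion strategy as the paper: the easy inclusion is identical, and for the hard inclusion both arguments take the unique (open) simplex $\sigma$ supporting $x$, observe that $g$ must stabilize $\sigma$, and use the ordered hypothesis to upgrade this to pointwise fixing. If anything, your version is slightly more explicit than the paper's, which simply asserts that ``since $K$ is an ordered [complex], every $g\in G$ which fixes $x$ also fixes the vertices of $\sigma$'' without spelling out that this is because a monotone self-bijection of a finite chain is the identity.
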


\begin{lemma}\label{lem:PathInRealization}
Suppose $P$ is a finite order-connected poset. Then there is a path in $|\scr{K}(P)|$ between any two vertices of $\scr{K}(P)$. 
\end{lemma}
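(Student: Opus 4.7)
The plan is to build paths in $|\scr{K}(P)|$ directly from the simplicial data determined by the order on $P$. The key observation is that if $p \leq q$ in $P$, then $\{p,q\}$ is a totally ordered subset of $P$, hence a simplex of $\scr{K}(P)$. Its geometric realization is a $1$-simplex in $|\scr{K}(P)|$, and the linear parametrization $t \mapsto (1-t)p + tq$ is a continuous path from $p$ to $q$. So the case of two comparable vertices is handled by an explicit edge path.

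For arbitrary vertices $p, q \in P$, the next step is to unpack order-connectedness. By the correspondence between topological and combinatorial paths cited earlier (Barmak), there is a combinatorial path $\beta: J_m \to P$ with $\beta(0) = p$ and $\beta(m) = q$. Since $\beta$ is a monotone map out of $J_m = (0 < 1 > 2 < \cdots)$, the sequence $\beta(0), \beta(1), \ldots, \beta(m)$ forms a zigzag of pairwise comparable consecutive entries in $P$.

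Finally, for each $i$, I would apply the first observation to the comparable pair $\beta(i), \beta(i+1)$ to get a linear path $\gamma_i$ inside the $1$-simplex $|\{\beta(i),\beta(i+1)\}| \subseteq |\scr{K}(P)|$, and then concatenate the $\gamma_i$ (reparametrizing each to an interval of length $1/m$) to produce a continuous path from $p$ to $q$ in $|\scr{K}(P)|$. Continuity of the concatenation follows from the pasting lemma, since successive segments agree at their common endpoint $\beta(i+1)$.

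The only real subtlety is justifying the existence of the zigzag, but this is immediate from the Barmak correspondence and the definition of monotone maps out of $J_m$; once one has the zigzag, the rest is purely a gluing argument inside the standard $1$-skeleton of $|\scr{K}(P)|$. I expect no serious obstacle beyond bookkeeping.
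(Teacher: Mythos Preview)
Your proposal is correct and follows essentially the same approach as the paper: reduce to comparable pairs via a combinatorial path (zigzag) in $P$, use that each comparable pair $\{p,q\}$ spans a $1$-simplex of $\scr{K}(P)$ whose realization gives a path between $p$ and $q$, and concatenate. The paper phrases the reduction slightly differently (``it suffices to show the case $x\leq y$'') but the content is identical.
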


Now that we have our lemmas we introduce a useful proposition.

\begin{proposition}\label{prop:GConnectRight}
Let $P$ be a finite $G$-poset and $K = \scr{K}(P)$ be the resulting ordered $G$-simplicial complex. If $P$ is $G$-connected, then $|K|$ is $G$-connected.

\begin{proof}
Suppose $P$ is $G$-connected and let $H \subseteq G$ be a closed subgroup. Take $x, y \in |K|^H$. Since by Lemma~\ref{lem:RealizationFixedPts} $|K|^H = |K^H|$, we may take simplices $\sigma_x, \sigma_y \in \Sim(K^H)$ such that $x \in |\sigma_x|$ and $y \in |\sigma_y|$. Choose vertices $v_x, v_y$ of $\sigma_x$ and $\sigma_y$ respectively. Since $P$ is $G$-connected, then $P^H$ is order connected, and so by Lemma~\ref{lem:PathInRealization} there is a path $\beta: I \to |K^H|$ from $v_x$ to $v_y$. Since $\sigma_x$ is a simplex, then $|\sigma_x|$ is path-connected, and so there is a path $\alpha: I \to |K^H|$ from $x$ to $v_x$. Similarly, since $\sigma_y$ is a simplex, then $|\sigma_y|$ is path-connected, and so there is a path $\gamma: I \to |K^H|$ from $v_y$ to $v_y$. Thus, we achieve a path $\alpha * \beta * \gamma$ from $x$ to $y$. Since we may repeat this process for any $x, y \in |K|^H$, then $|K|^H$ is path-connected. Since this holds for any closed subgroup $H \subseteq G$, then $|K|$ is $G$-connected.  
\end{proof}
\end{proposition}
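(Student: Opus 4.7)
The plan is to reduce $G$-connectedness of $|K|$ to a statement about path-connectedness of a geometric realization of an order complex, to which Lemma~\ref{lem:PathInRealization} directly applies. Fix a closed subgroup $H \subseteq G$; my goal is to show $|K|^H$ is path-connected. First I would invoke Lemma~\ref{lem:RealizationFixedPts} (applied to $H$ in place of $G$, which is legitimate since $K$ is also an ordered $H$-simplicial complex by restriction) to identify $|K|^H$ with $|K^H|$.

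Next I would identify $K^H$ with the order complex $\scr{K}(P^H)$. The vertex set of $K^H$ is certainly $P^H$. For a simplex $\{p_0 < p_1 < \cdots < p_n\} \in \Sim(K)$ to be fixed by $H$, each $h \in H$ must permute its elements; but $h$ acts monotonically, so the only order-preserving permutation of a totally ordered set is the identity. Hence each $p_i$ is individually fixed, and the fixed simplices are exactly the totally ordered subsets of $P^H$, i.e.\ the simplices of $\scr{K}(P^H)$. Thus $|K|^H \cong |\scr{K}(P^H)|$.

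Since $P$ is $G$-connected, $P^H$ is order-connected, so by Lemma~\ref{lem:PathInRealization} any two vertices of $\scr{K}(P^H)$ are joined by a path in $|\scr{K}(P^H)|$. To handle arbitrary points $x, y \in |\scr{K}(P^H)|$, I would choose simplices $\sigma_x, \sigma_y \in \Sim(\scr{K}(P^H))$ with $x \in |\sigma_x|$ and $y \in |\sigma_y|$, pick vertices $v_x \in \sigma_x$ and $v_y \in \sigma_y$, and concatenate three paths: one from $x$ to $v_x$ inside the convex set $|\sigma_x|$, one from $v_x$ to $v_y$ supplied by the lemma, and one from $v_y$ to $y$ inside $|\sigma_y|$. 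Since $H$ was arbitrary, this shows $|K|$ is $G$-connected.

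The main obstacle, such as it is, is the identification $K^H = \scr{K}(P^H)$, which requires using the fact that $H$ acts \emph{monotonically} on the ordered simplicial complex (this is exactly where the ordered structure is essential; without it, a subgroup could fix a simplex setwise by permuting its vertices, and the fixed set would not be a subcomplex in general). Everything else is bookkeeping that follows formally from the two cited lemmas and path concatenation.
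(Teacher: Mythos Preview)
Your proof is correct and follows essentially the same approach as the paper's: reduce to $|K^H|$ via Lemma~\ref{lem:RealizationFixedPts}, connect arbitrary points to vertices inside their carrier simplices, and use Lemma~\ref{lem:PathInRealization} to connect the vertices. The only difference is that you explicitly justify the identification $K^H = \scr{K}(P^H)$ (using monotonicity of the $H$-action), whereas the paper uses this identification tacitly when it invokes Lemma~\ref{lem:PathInRealization} on $P^H$ but lands the resulting path in $|K^H|$.
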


Now we show the other direction.

\begin{proposition}\label{prop:GConnectLeft}
Let $P$ be a finite $G$-poset and $K = \scr{K}(P)$ be the resulting ordered $G$-simplicial complex. Then if $|K|$ is $G$-connected then $P$ is $G$-connected.
\end{proposition}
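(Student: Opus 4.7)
The plan is to reduce to the non-equivariant case subgroup by subgroup, and then transfer connectivity from the realization back to the underlying poset. First, I would extend Lemma~\ref{lem:RealizationFixedPts} to arbitrary closed subgroups $H \subseteq G$, proving that $|K|^H = |K^H|$. The proof is verbatim the same as the one for $H = G$: one direction is immediate, and the other direction uses the assumption that $K$ is an \emph{ordered} $G$-simplicial complex, which forces any $H$-fixed point of an open simplex to fix all vertices of that simplex. Since the vertex set of $K^H$ is precisely $P^H$ and a simplex of $K$ lies in $K^H$ iff all its vertices are in $P^H$ (and are then totally ordered there), we obtain $K^H = \scr{K}(P^H)$ as ordered simplicial complexes, and hence $|K|^H = |\scr{K}(P^H)|$.

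Next, I fix an arbitrary subgroup $H \subseteq G$ and assume that $|K|^H = |\scr{K}(P^H)|$ is path-connected; the goal is to show that $P^H$ is order-connected as a finite poset. Given two elements $x, y \in P^H$, view them as vertices of $\scr{K}(P^H)$, and choose a continuous path $\gamma: I \to |\scr{K}(P^H)|$ from $x$ to $y$. Using simplicial approximation of $\gamma$ by a simplicial map out of a sufficiently fine subdivision $\sd^n(\Delta^1)$ of the $1$-simplex (whose realization is $I$), I obtain a finite sequence of vertices $x = p_0, p_1, \dots, p_k = y$ in $P^H$ such that each consecutive pair $\{p_i, p_{i+1}\}$ lies in a simplex of $\scr{K}(P^H)$. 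By the definition of the order complex, this means $p_i$ and $p_{i+1}$ are comparable in $P^H$ for each $i$, yielding an order-theoretic zigzag from $x$ to $y$ in $P^H$. Combined with the fact (noted in Section~\ref{sec:Prelims}) that a combinatorial path in a finite $T_0$ space exists precisely when a topological path does, this gives a path from $x$ to $y$ in $P^H$.

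Since $H \subseteq G$ was arbitrary and each fixed subposet $P^H$ has been shown to be path-connected, we conclude that $P$ is $G$-connected.

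The main obstacle is the middle step: converting the topological path in $|\scr{K}(P^H)|$ into a zigzag of comparabilities in $P^H$. This is where orderedness is essential — without the order structure, neither would the fixed-point set necessarily be a subcomplex (so $|K|^H = |K^H|$ could fail), nor would the approximation argument land in $\scr{K}(P^H)$. Once orderedness is in hand, the remaining passage from ``vertices share a simplex'' to ``elements are comparable'' is immediate from the definition of the order complex, and the entire argument reduces to a standard application of simplicial approximation together with the first step.
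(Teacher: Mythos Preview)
Your proposal is correct and follows essentially the same approach as the paper: reduce to a fixed subgroup $H$, use $|K|^H = |K^H| = |\scr{K}(P^H)|$, and then apply simplicial approximation to a path $I \to |\scr{K}(P^H)|$ (the paper writes this as $\sd^n(\scr{K}(J_1)) \to K^H$) to extract a zigzag of comparable elements in $P^H$. You are somewhat more explicit than the paper about extending Lemma~\ref{lem:RealizationFixedPts} to arbitrary $H$ and about the identification $K^H = \scr{K}(P^H)$, but the argument is the same.
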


\begin{proof}
Let $H$ be a closed subgroup of $G$. Suppose that $|K|$ is $G$-connected. Then $|K|^H=|K^H|$ is path-connected. Then for any $x,y \in K^H$ there is a path
\[
\begin{tikzcd}[ampersand replacement=\&]
\alpha: \&[-3em] {|\scr{K}(J_1)|}\cong I \arrow[r] \& {|K^H|}
\end{tikzcd}
\]
from $x$ to $y$. There is a simplicial approximation $\beta:\sd^n(\scr{K}(J_1))\cong \scr{K}(J_n)\to K^H$ to $\alpha$. Since this is a simplicial approximation, it must send $0$ to $x$ and $n$ to $y$. Consequently, $\beta$ defines a sequence of 1-simplices in $K^H$ connecting $x$ to $y$. Thus, $P^H$ is order-connected.
\end{proof}

Using the bounds on $\TC_G$ implied by $G$-connectedness and the characterization of $G$-categorical subsets of $|K|$ from Proposition~\ref{prop:starGCategorical} we can provide conditions sufficient to ensure that $\TC_G(|K|)$ is finite.

\begin{proposition}\label{prop:finiteTCRealize}
Let $P$ be a finite $G$-connected poset. Then $\TC_G(|\scr K(P)|) < \infty$.
\end{proposition}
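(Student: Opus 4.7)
The plan is to combine the bound $\TC_G(X) \le \LS_G(X\times X)$ available for $G$-connected spaces (Proposition~\ref{prop:GConnectedBound}) with the observation that the star cover of $|\scr K(P)|$ already provides a finite $G$-categorical open cover via Proposition~\ref{prop:starGCategorical}. Once we have $\LS_G(|\scr K(P)|)<\infty$, a product argument in the spirit of Proposition~\ref{prop:LSofProduct} will upgrade this to finiteness of $\LS_G$ on the product, and hence of $\TC_G$ on $|\scr K(P)|$.

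Writing $K = \scr K(P)$, I would first use Proposition~\ref{prop:GConnectRight} to conclude that $|K|$ is $G$-connected, and then invoke Proposition~\ref{prop:GConnectedBound} to reduce the problem to showing $\LS_G(|K|\times |K|)<\infty$. As a warm-up, the family $\{G\cdot \star(v)\}_v$, indexed over the (finitely many) vertices $v$ of $K$, is a finite open cover of $|K|$ by Lemma~\ref{lem:starOpen}, and each member is $G$-categorical by Proposition~\ref{prop:starGCategorical}. Hence $\LS_G(|K|)<\infty$.

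To handle the product, I would mimic the proof of Proposition~\ref{prop:LSofProduct}. Given $U = G\cdot \star(v)$ and $V = G\cdot \star(w)$ with $G$-homotopies $H_U,H_V$ contracting their inclusions to orbits $\cal O(v_0)$ and $\cal O(w_0)$, the pair $(H_U,H_V)$ defines a $G$-homotopy on $U\times V$ from the inclusion into $|K|\times|K|$ to a map with image in $\cal O(v_0)\times\cal O(w_0)$. Because $|K|$ is Hausdorff and $G$ is finite, the finite set $\cal O(v_0)\times\cal O(w_0)$ is discrete, so each $G$-orbit inside it is clopen, and pulling these orbits back through the time-$1$ map produces a finite open invariant $G$-categorical refinement of $U\times V$. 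Assembling these refinements over all pairs $(v,w)$ yields a finite open $G$-categorical cover of $|K|\times|K|$, whence $\LS_G(|K|\times|K|)<\infty$ and the result follows.

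The main obstacle is that Proposition~\ref{prop:LSofProduct} as stated applies only to finite $T_0$ $G$-spaces, where the discreteness of orbits is guaranteed by Lemma~\ref{lem:TopOnOrbit}. The substitute I would use here is that any finite subset of a Hausdorff space (in particular of $|K|$ or $|K|\times|K|$) is automatically discrete when $G$ is finite. With this replacement, the rest of the product-decomposition argument carries over verbatim to the present non-finite setting, and no new ideas beyond the ones already developed in the paper are required.
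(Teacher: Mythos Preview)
Your approach is essentially the same as the paper's: use $G$-connectedness of $|\scr K(P)|$ to invoke Proposition~\ref{prop:GConnectedBound}, show $\LS_G(|\scr K(P)|)<\infty$ via the star cover and Proposition~\ref{prop:starGCategorical}, and then pass to the product via an $\LS_G$-product inequality. The paper's proof simply cites Proposition~\ref{prop:LSofProduct} for the last step; you correctly observe that that proposition is stated only for finite $T_0$ $G$-spaces, and you supply the missing justification (finite orbits in a Hausdorff space are discrete, so the proof of Proposition~\ref{prop:LSofProduct} goes through verbatim). In this sense your argument is actually a bit more careful than the paper's, and you also invoke the correct direction (Proposition~\ref{prop:GConnectRight}) for the $G$-connectedness of $|\scr K(P)|$.
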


\begin{proof}
By Proposition~\ref{prop:GConnectLeft}, since $P$ is $G$-connected $|\scr K(P)|$ is $G$-connected. By Proposition~\ref{prop:GConnectedBound}, since $|\scr K(P)|$ is $G$-connected the following inequality holds:
$$
\TC_G(|\scr K(P)|) \le \LS_G(|\scr K(P)| \times |\scr K(P)|).
$$
By Proposition~\ref{prop:LSofProduct} we have 
$$
\LS_G(|\scr K(P)| \times |\scr K(P)|) \le \#(G)\LS_G(|\scr K(P)|)^2.
$$
By Lemma~\ref{lem:starOpen}, $|\scr K(P)|$ has an open cover $\{G \cdot \star(x) \mid x \in P\}$. By Proposition~\ref{prop:starGCategorical}, for each $x \in P$ $G \cdot \star(x)$ is $G$-categorical. Then $\LS_G(|\scr K(P)|) < \infty$. 
\end{proof}

For any $G$-complex $K$, $\sd(K)$ is the order complex of a poset. We can thus apply the proposition above to more general $G$-complexes.

\subsection{Equivariant Simplicial Complexity} 

With all the relevant machinery in place, we are prepared to generalize Gonz\`{a}lez's notion of simplicial complexity introduced in \cite{gonzalez2018} to the equivariant setting. We closely follow the approach in \cite{gonzalez2018}, defining a sequence of numbers $\SC_G^{b,c}(K)$ which depends on a pair of integers $b$ and $c$ and decreases in each of $b$ and $c$.

\begin{remark}
We may reformulate the definition of equivariant topological complexity in terms of $G$-homotopies. Let $X$ be any $G$-space and denote by $p_1, p_2: X \times X \to X$ the projections. The equivariant topological complexity of $X$ is equivalently, the minimum $n \geq 1$ such that there exists an open, invariant cover $\{U_i\}_{i=1}^n$ of $X \times X$ on which the compositions 
\[\begin{tikzcd}[ampersand replacement=\&]
	U_i \& {X \times X} \& X
	\arrow[hook, from=1-1, to=1-2]
	\arrow["{p_2}"', shift right=1, from=1-2, to=1-3]
	\arrow["{p_1}", shift left=1, from=1-2, to=1-3]
\end{tikzcd}\]
are $G$-homotopic.
\end{remark}

Since contiguity can be thought of as the simplicial analogue of homotopy, this reformulation motivates the following definition.

\begin{definition}
Let $K$ be a finite $G$-simplicial complex and for any $\ell \geq 0$, fix some equivariant approximations of the identity $\iota: \sd^{\ell+1}(K \times K) \to \sd^\ell(K \times K)$. Define $\SC_G^{b, c}(K)$ as the smallest nonnegative integer $n$ such that $\sd^b(K \times K)$ may be covered by $n$ invariant subcomplexes $\{L_i\}_{i=1}^n$ and the two compositions

\[\begin{tikzcd}[ampersand replacement=\&]
	{\pi_1, \pi_2: L_i} \& {\sd^b(K \times K)} \& {K \times K} \& K
	\arrow[hook, from=1-1, to=1-2]
	\arrow["\iota", from=1-2, to=1-3]
	\arrow["{p_2}"', shift right=1, from=1-3, to=1-4]
	\arrow["{p_1}", shift left=1, from=1-3, to=1-4]
\end{tikzcd}\]
are equivariantly $c$-contiguous.
\end{definition}

\begin{remark}
We can always find an equivariant approximation of the identity from $\sd(K) \to K$. If $K$ is a $G$-simplicial complex, the map $\sd(K) \to K$ given by $\{v_1, \dots, v_n\} \mapsto v_n$ is equivariant by Lemma~\ref{lem:EquivarT} and is also known to be an approximation of the identity from Example~\ref{ex:Tidapprox}. By taking $\sd^n(K)$ as our $G$-simplicial complex, we may always find an equivariant approximation of the identity $\sd^{n+1}(K) \to \sd^n(K)$ for any $n \geq 0$.  
\end{remark}

\begin{proposition}
Let $K$ be a finite $G$-simplicial complex. For all $c \ge 0$ we have that $\SC_G^{b, c}(K) \ge \SC_G^{b, c+1}(K) \ge 0$.
\end{proposition}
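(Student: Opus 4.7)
The plan is to show the inequality by reusing a cover. The nonnegativity $\SC_G^{b,c+1}(K) \ge 0$ is immediate from the definition (it counts a number of subcomplexes). So the content of the proposition is the monotonicity in $c$.

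First I would take a minimal cover $\{L_i\}_{i=1}^n$ of $\sd^b(K \times K)$ by invariant subcomplexes realizing $\SC_G^{b,c}(K) = n$, so that on each $L_i$ the two compositions $\pi_1, \pi_2 : L_i \to K$ are equivariantly $c$-contiguous. I would then argue that this same cover witnesses $\SC_G^{b,c+1}(K) \le n$, by showing that any two equivariantly $c$-contiguous maps are automatically equivariantly $(c+1)$-contiguous.

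The key observation is that any equivariant simplicial map $\phi : L_i \to K$ is equivariantly $1$-contiguous to itself (for each simplex $\sigma$, $\phi(\sigma) \cup \phi(\sigma) = \phi(\sigma)$ is a simplex of $K$). Therefore, given a chain of equivariant simplicial maps
\[
\pi_1 = \phi_0, \phi_1, \dots, \phi_c = \pi_2
\]
realizing the equivariant $c$-contiguity, I can extend it to a chain of length $c+1$ by appending $\phi_{c+1} := \phi_c = \pi_2$. Each consecutive pair in the extended chain is equivariantly $1$-contiguous (the new pair $\phi_c, \phi_{c+1}$ trivially so), which exhibits $\pi_1$ and $\pi_2$ as equivariantly $(c+1)$-contiguous.

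There is no real obstacle here; the result is essentially a formal consequence of the definition of equivariant contiguity. The only thing to verify is that padding a contiguity chain by repetition preserves equivariance of each map, which is automatic because $\phi_c$ was assumed equivariant. Hence $\SC_G^{b,c+1}(K) \le \SC_G^{b,c}(K)$, and combined with the trivial lower bound we obtain $\SC_G^{b,c}(K) \ge \SC_G^{b,c+1}(K) \ge 0$.
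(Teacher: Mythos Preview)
Your proposal is correct and essentially identical to the paper's proof: both reuse the same cover and extend the contiguity chain by one step using the fact that an equivariant simplicial map is equivariantly $1$-contiguous to itself. The only cosmetic difference is that the paper pads the chain at the $\pi_1$ end while you pad at the $\pi_2$ end.
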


\begin{proof}
Suppose that $\SC_G^{b, c}(K) =  n$. Then for each $i \in [n]$ the two restrictions $\pi_1, \pi_2: L_i \to K$ are equivariantly $c$-contiguous. Since $\pi_1$ is a simplicial map we have that $\pi_1$ is equivariantly contiguous to itself. So $\pi_1, \pi_2$ are equivariantly $(c + 1)$-contiguous.
\end{proof}

Now that we've proven $\SC_G^{b, c}(K)$ decreases as $c$ approaches infinity, we can define simplicial complexity with respect to only one index.

\begin{definition}
Let $K$ be a finite $G$-simplicial complex. Define 
$$
\SC_G^{b}(K) = \min_{c \geq 0} \SC_G^{b,c}(K)= \lim_{c \rightarrow \infty} \SC_G^{b,c}(K).
$$
\end{definition}

One problem we run into is that the value of $\SC_G^{(b, c)}(K)$ seems to depend on our choice of approximations $\iota: \sd^{m+1}(K) \to \sd^m(K)$. However, this problem resolves itself when taking the limit as $c$ goes to infinity.

\begin{lemma}
The value of $\SC_G^{b}(K)$ is independent of choice of approximations of the identity. 
\end{lemma}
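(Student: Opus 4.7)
The plan is to show that two different choices of iterated equivariant simplicial approximations of the identity produce values of $\SC_G^{b,c}(K)$ that agree in the limit $c \to \infty$. Concretely, let $\iota, \iota' : \sd^b(K \times K) \to K \times K$ denote two choices of iterated approximations, obtained by composing the selected equivariant approximations $\sd^{\ell+1}(K \times K) \to \sd^\ell(K \times K)$ for $\ell = 0, \ldots, b-1$. I will show that the values they compute differ by at most $2$ in the contiguity index, and hence coincide after passing to the limit over $c$.

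First I would apply Proposition \ref{prop:ContiguitySummary}(1): both $\iota$ and $\iota'$ are equivariant simplicial approximations of the continuous identity $|\sd^b(K \times K)| \to |K \times K|$, so they are equivariantly $1$-contiguous. Next I would observe that postcomposition with the equivariant simplicial projections $p_1, p_2: K \times K \to K$ preserves equivariant $1$-contiguity, since the image of a simplex under a simplicial map is a simplex and equivariance is closed under composition. Restriction to an invariant subcomplex similarly preserves equivariant $1$-contiguity. Together these yield, for any invariant subcomplex $L \subseteq \sd^b(K \times K)$, that $(p_j \circ \iota)|_L$ and $(p_j \circ \iota')|_L$ are equivariantly $1$-contiguous for $j = 1, 2$.

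Now suppose $\SC_G^{b,c}(K)$ computed with respect to $\iota$ equals $n$, witnessed by an invariant cover $\{L_i\}_{i=1}^n$ making the restrictions $\pi_1, \pi_2$ equivariantly $c$-contiguous. On each $L_i$, concatenating the chains $\pi_1' \sim_1 \pi_1 \sim_c \pi_2 \sim_1 \pi_2'$ produces an equivariant $(c+2)$-contiguity between the analogous restrictions $\pi_1'$ and $\pi_2'$ built from $\iota'$. Hence the same cover witnesses $\SC_G^{b, c+2}(K) \leq n$ with respect to $\iota'$. Taking the infimum over $c$ gives $\SC_G^b(K)_{\iota'} \leq \SC_G^b(K)_\iota$, and symmetry between $\iota$ and $\iota'$ finishes the argument.

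The main technical point, rather than a genuine obstacle, is verifying that equivariant $1$-contiguity is preserved both by postcomposition with an equivariant simplicial map and by restriction to an invariant subcomplex. Both facts follow directly from the definitions: the relevant union of images remains a simplex, and equivariance is preserved because all maps involved commute with the $G$-action. The only real subtlety is that one cannot hope for the value of $\SC_G^{b,c}(K)$ to be independent of the choice of $\iota$ for any fixed finite $c$; the content of the lemma is that this dependence is washed out once $c$ is allowed to vary, which is precisely what the $(c+2)$ shift above accomplishes.
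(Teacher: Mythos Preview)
Your proposal is correct and follows essentially the same approach as the paper, which defers to Gonz\`{a}lez's argument while ensuring the contiguity chains consist of equivariant maps. The key ingredients you identify---that the two iterated approximations $\iota,\iota'$ are equivariantly $1$-contiguous by Proposition~\ref{prop:ContiguitySummary}(1), and that this $1$-contiguity survives postcomposition with $p_j$ and restriction to each $L_i$, yielding a $(c+2)$-chain---are exactly what is needed, and your observation that the dependence on $\iota$ disappears only in the limit over $c$ is precisely the point.
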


\begin{proof}
The proof is mutatis mutanda that of the proof given in \cite{gonzalez2018}. We take additional care to make sure the contiguity chain is a chain of equivariant functions.
\end{proof}


Now that $\SC_G^b(K)$ is well-defined, we prepare to take the limit as $b \to \infty$. We first prove that this limit converges, and then lastly prove that it converges to $\TC_G(|K|)$.

\begin{lemma}
Let $K$ be a finite $G$-simplicial complex. Then for any $b \geq 1$, $\SC_G^b(K) \geq \SC_G^{b+1}(K)$.
\end{lemma}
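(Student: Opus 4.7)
The plan is to start with an optimal cover witnessing $\SC_G^b(K)$ and show that barycentrically subdividing each member produces a cover of $\sd^{b+1}(K\times K)$ of the same cardinality with the required contiguity property. By the preceding independence-of-choice lemma, I may assume that all the fixed equivariant approximations of the identity are iterates of the last-vertex map $\tau$ (as in Example~\ref{ex:Tidapprox}). In particular, the chosen iterated approximation $\iota^{b+1}\colon \sd^{b+1}(K\times K)\to K\times K$ factors as $\iota^{b}\circ\tau$, where $\tau\colon \sd^{b+1}(K\times K)\to \sd^b(K\times K)$ is the single-step last-vertex map.

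Next, fix $c\geq 0$ and let $n=\SC_G^{b,c}(K)$, realized by a cover $\{L_i\}_{i=1}^n$ of $\sd^b(K\times K)$ by invariant subcomplexes, with each pair of restrictions $\pi_1|_{L_i},\pi_2|_{L_i}\colon L_i\to K$ equivariantly $c$-contiguous. Define $L_i':=\sd(L_i)\subseteq\sd^{b+1}(K\times K)$. The three routine checks are: (i) $L_i'$ is a subcomplex of $\sd^{b+1}(K\times K)$ since $L_i$ is a subcomplex of $\sd^b(K\times K)$; (ii) $L_i'$ is invariant since the $G$-action on subdivision is defined chain-wise and $L_i$ is invariant; and (iii) $\{L_i'\}$ still covers $\sd^{b+1}(K\times K)$, because any simplex of $\sd^{b+1}(K\times K)$ is a chain $\{\sigma_0\subsetneq\cdots\subsetneq\sigma_k\}$ in $\sd^b(K\times K)$ whose top term $\sigma_k$ lies in some $L_i$, and then the subcomplex property forces all $\sigma_j\in L_i$ so the whole chain lies in $L_i'$.

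The substantive step is transferring the contiguity. With our choice of $\tau$ as the approximation, the restriction $\tau|_{L_i'}$ sends a chain in $\sd(L_i)$ to its top simplex, which is a vertex of $L_i$, so $\tau$ restricts to an equivariant simplicial map $L_i'\to L_i$. Consequently the restricted projections factor as $\pi_j'|_{L_i'}=\pi_j|_{L_i}\circ\tau|_{L_i'}$ for $j=1,2$. Given any equivariant $c$-contiguity chain $\pi_1|_{L_i}=\phi_0,\phi_1,\ldots,\phi_c=\pi_2|_{L_i}$, post-composing each $\phi_j$ with the equivariant simplicial map $\tau|_{L_i'}$ produces an equivariant $c$-contiguity chain from $\pi_1'|_{L_i'}$ to $\pi_2'|_{L_i'}$; this works because pre-composition by a simplicial map preserves the $1$-contiguity relation simplex-wise and preserves equivariance. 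Therefore $\SC_G^{b+1,c}(K)\leq n=\SC_G^{b,c}(K)$, and passing to the limit $c\to\infty$ yields $\SC_G^{b+1}(K)\leq\SC_G^b(K)$.

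The only genuine potential obstacle is the first-step approximation: one needs $\iota\colon \sd^{b+1}(K\times K)\to\sd^b(K\times K)$ to actually map $L_i'$ into $L_i$, since otherwise the factorization above fails. Choosing $\tau$ dispenses with this worry, and the independence-of-choice lemma guarantees that this is no loss of generality.
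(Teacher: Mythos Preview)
Your proof is correct and is essentially a streamlined variant of the paper's argument. Both proofs set $L_i':=\sd(L_i)$ and transport the contiguity chain along an equivariant simplicial map $\sd(L_i)\to L_i$, but you first invoke the independence-of-choice lemma to fix all approximations to be the last-vertex map $\tau$, which guarantees that the single-step approximation $\sd^{b+1}(K\times K)\to\sd^b(K\times K)$ genuinely restricts to $L_i'\to L_i$. This lets you factor $\pi_j'|_{L_i'}=\pi_j|_{L_i}\circ\tau|_{L_i'}$ directly and obtain $\SC_G^{b+1,c}(K)\le\SC_G^{b,c}(K)$ with no increase in $c$. The paper instead works with an arbitrary fixed approximation $\iota$, observes that $i_{L_i}\circ\lambda$ and $\iota\circ i_{\sd(L_i)}$ are both equivariant approximations of the inclusion $|L_i|\hookrightarrow|K\times K|$ and hence equivariantly $1$-contiguous, and splices these extra steps onto the ends of the chain to get $\SC_G^{b+1,c+2}(K)\le\SC_G^{b,c}(K)$. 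Your route is tidier and yields a sharper intermediate inequality, at the cost of relying on the previously established independence lemma; the paper's route is more self-contained.

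One minor wording slip: you write ``post-composing each $\phi_j$ with $\tau|_{L_i'}$'' but mean pre-composing (forming $\phi_j\circ\tau|_{L_i'}$), as your next sentence makes clear.
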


\begin{proof}
Let $c \geq 0$ be arbitrary and fix some $\iota: \sd^{m+1}(K \times K) \to \sd^m(K \times K)$ approximations of the identity. Let $J \subseteq \sd^b(K \times K)$ be a subcomplex such that the two restrictions $\pi_1 \circ i_J, \pi_2 \circ i_J: J \to K$ are equivariantly $c$-contiguous. Then there exists an equivariant contiguity chain $\pi_1 \circ i_J = \phi_0, \phi_1, \dots, \phi_c = \pi_2 \circ i_J$ connecting them.

Let $\lambda: \sd(J) \to J$ be an equivariant approximation of the identity. Then the maps $i_J \circ \lambda, \iota \circ i_{\sd(J)}: \sd(J) \to \sd^b(K \times K)$ approximate the inclusion $i_{|J|}: |J| \to |K \times K|$. Since $i_J$ is an approximation of $i_{|J|}$ and since $\lambda$ is an approximation of the identity, we have that $i_J \circ \lambda$ is an approximation of $i_{|J|}$. Similarly, $ \iota \circ i_{\sd(J)}$ is also an approximation of $i_{|J|}$. Therefore, $i_J \circ \lambda$ and $ \iota \circ i_{\sd(J)}$ are $1$-contiguous. Therefore, $\pi_1 \circ i_J \circ \lambda$ and $\pi_1 \circ \iota \circ i_{\sd(J)}$ are $1$-contiguous, and $\pi_2 \circ i_J \circ \lambda$ and $\pi_2 \circ \iota \circ i_{\sd(J)}$ are $1$-contiguous. Thus, we may form an equivariant contiguity chain
\[
\pi_1 \circ \iota \circ i_{\sd(J)}, \pi_1 \circ i_J \circ \lambda = \phi_0 \circ \lambda, \phi_1 \circ \lambda, \dots, \phi_{c-1} \circ \lambda, \phi_c \circ \lambda = \pi_2 \circ i_J \circ \lambda, \pi_2 \circ \iota \circ i_{\sd(J)}
\]
of length $c+2$. However, $\pi_1 \circ \iota \circ i_{\sd(J)}$ is exactly the restriction $\pi_1: \sd(J) \to K$. Similarly, $\pi_2 \circ \iota \circ i_{\sd(J)}$ is exactly the restriction $\pi_2: \sd(J) \to K$. So $\pi_1$ is equivariantly $c$-contiguous to $\pi_2$ on $\sd(J)$. 

Suppose that $\SC_G^{(b, c)}(K) = n$. Then there exists an equivariant cover $\{L_i\}_{i=1}^n$ of $\sd^b(K \times K)$ such that the two restrictions $\pi_1, \pi_2: L_i \to K$ are $c$-contiguous on each $L_i$. Then there exists an equivariant cover $\{\sd(L_i)\}_{i=1}^n$ of $\sd^{b+1}(K \times K)$ such that the two restrictions $\pi_1, \pi_2: L_i \to K$ are $(c+2)$-contiguous. Therefore, $\SC_G^{(b, c)}(K) \geq \SC_G^{(b+1, c+2)}(K)$. Taking the limit as $c$ goes to infinity gives us $\SC_G^b(K) \geq \SC_G^{b+1}(K)$.
\end{proof}

This lemma now makes the definition of equivariant simplicial complexity possible.

\begin{definition}
For any ordered $G$-simplicial complex $K$, define the {\it equivariant simplicial complexity} of $K$ as
\[
\SC_G(K) = \min_{b \geq 0} \SC_G^b(K) = \lim_{b \to \infty} \SC_G^b(K)
\]
\end{definition}

In order to show that $\SC_G(K) = \TC_G(|K|)$ for any ordered $G$-simplicial complex $K$, we need some additional machinery relating to realizations of $G$-simplicial complexes. 

\begin{definition}[Definition 3.11 in \cite{colman_grant_2012}]
Suppose $X$ is a $G$-space. We say $X$ is {\it $G$-completely normal} if whenever $A, B \subseteq X$ are invariant subsets such that $\overline{A} \cap B = \varnothing = A \cap \overline{B}$, then $A$ and $B$ have disjoint, invariant open neighborhoods. 
\end{definition}

Next we establish that topological complexity may be calculated with closed sets as well as open ones.

\begin{lemma}\label{lem:ClosedCover}
Suppose $K$ is an ordered $G$-simplicial complex. Let $C_G(|K|)$ denote the least number of closed, equivariant sets $\{C_i\}$ which cover $|K| \times |K|$ on which the two compositions
\[\begin{tikzcd}[ampersand replacement=\&]
	{C_i} \& {|K| \times |K|} \& {|K|}
	\arrow[hook, from=1-1, to=1-2]
	\arrow["{p_2}"', shift right=1, from=1-2, to=1-3]
	\arrow["{p_1}", shift left=1, from=1-2, to=1-3]
\end{tikzcd}\]
are $G$-homotopic. Then $\TC_G(|K|) = C_G(|K|)$.  
\end{lemma}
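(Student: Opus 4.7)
The plan is to establish the two inequalities $C_G(|K|) \leq \TC_G(|K|)$ and $\TC_G(|K|) \leq C_G(|K|)$ separately. Throughout, we use that $|K|$ is a $G$-CW complex, and hence a $G$-ANR and $G$-completely normal (in particular $G$-normal), and similarly for $|K| \times |K|$. We also use the reformulation of $\TC_G$ in terms of $G$-homotopies of the two projections, which is equivalent to $G$-sectional sets for the path fibration $\pi: |K|^I \to |K| \times |K|$.

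For the inequality $C_G(|K|) \leq \TC_G(|K|)$, suppose $\{U_i\}_{i=1}^n$ is an open invariant cover of $|K| \times |K|$ realizing $\TC_G(|K|) = n$, on each piece of which $p_1$ is $G$-homotopic to $p_2$. I would apply the $G$-equivariant shrinking lemma, which holds for finite open covers of $G$-normal spaces: there exist closed invariant sets $C_i \subseteq U_i$ still covering $|K| \times |K|$. Since the restriction of a $G$-homotopy to a closed invariant subset is still a $G$-homotopy, each $C_i$ inherits the required property, giving $C_G(|K|) \leq n$.

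The harder direction is $\TC_G(|K|) \leq C_G(|K|)$, which requires thickening closed $G$-sections to open ones. Given a closed invariant cover $\{C_i\}_{i=1}^n$ realizing $C_G(|K|) = n$, each $C_i$ carries a $G$-equivariant section $s_i: C_i \to |K|^I$ of the path fibration $\pi$. Since $|K| \times |K|$ is a $G$-ANR, there exists an open invariant neighborhood $U_i \supseteq C_i$ together with a $G$-equivariant strong deformation retraction, i.e. a $G$-homotopy $H_i: U_i \times I \to |K| \times |K|$ with $H_i(-,0) = \iota_{U_i}$, $H_i(-,1) = r_i$ where $r_i: U_i \to C_i$ is an equivariant retraction, and $H_i(c,t) = c$ for $c \in C_i$. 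Reversing time, I would use the $G$-homotopy lifting property of the $G$-fibration $\pi$ applied to the starting lift $s_i \circ r_i: U_i \to |K|^I$, obtaining an equivariant lift $\tilde{H}_i: U_i \times I \to |K|^I$ covering the reversed deformation. Evaluating at the endpoint then yields an equivariant section $\tilde{s}_i: U_i \to |K|^I$ of $\pi$ extending $s_i$, exhibiting $U_i$ as an open $G$-sectional set.

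The main obstacle will be verifying the two equivariant technical inputs: the equivariant shrinking lemma (which should follow from classical shrinking applied to the orbit space together with $G$-normality, or by induction on the cover using $G$-normal separation) and, more delicately, the existence of an equivariant neighborhood deformation retract around each closed invariant subset $C_i$ of the $G$-CW complex $|K| \times |K|$. The latter is standard for $G$-ANRs but must be checked to produce an invariant neighborhood and an equivariant retraction; with that in hand, equivariance of the homotopy lift is automatic from the $G$-fibration property of $\pi$, completing the argument.
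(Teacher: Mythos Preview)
Your first inequality $C_G(|K|) \leq \TC_G(|K|)$ via an equivariant shrinking argument is essentially what the paper does: the paper separates, for each $i$, the closed invariant sets $Y_i := \big(\bigcup_{j\neq i} U_j\big)^c$ and $U_i^c$ using $G$-complete normality of $|K|\times |K|$ (citing Colman--Grant), producing $V_i$ with $\overline{V_i}\subseteq U_i$; the closures $\overline{V_i}$ then form the required closed invariant cover.

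The second inequality, however, contains a genuine gap. You assert that because $|K|\times|K|$ is a $G$-ANR, an \emph{arbitrary} closed invariant subset $C_i$ admits an invariant open neighborhood that $G$-strong-deformation-retracts onto $C_i$. This is false already non-equivariantly: take $C=\{0\}\cup\{1/n: n\geq 1\}$ inside the ANR $[0,1]$; no neighborhood of $C$ retracts onto $C$. The $G$-ANR property controls how $|K|\times|K|$ sits inside other spaces, not how arbitrary closed subsets sit inside it; for $C_i$ to be a $G$-NDR you would need $C_i$ itself to be a $G$-ANR (equivalently, the inclusion to be a $G$-cofibration), and nothing in the hypotheses guarantees this. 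Consequently the retraction $r_i$ and the homotopy-lifting step built on it are not available in general.

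The paper circumvents this by working with the \emph{target} rather than the pair. It uses that $|K|$ is a $G$-ANR to invoke an equivariant homotopy extension theorem (Murayama): the given $G$-homotopy $H:C_i\times I\to |K|$ from $p_1|_{C_i}$ to $p_2|_{C_i}$ extends to a $G$-homotopy $\tilde H:(|K|\times|K|)\times I\to |K|$ with $\tilde H|_{t=0}=p_1$ globally. At $t=1$ one only knows $\tilde H|_{t=1}=p_2$ on $C_i$; the paper then applies a second $G$-ANR fact (Wong) that two $G$-maps into a $G$-ANR which agree on a closed invariant set are $G$-homotopic on an invariant open neighborhood of that set, yielding the desired open $U_i\supseteq C_i$. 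If you want to repair your argument along your own lines, the natural fix is exactly this: replace the nonexistent retraction with equivariant homotopy extension into the $G$-ANR target, followed by a neighborhood-agreement result.
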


\begin{proof}
First we show that $\TC_G(|K|) \geq C_G(|K|)$. Let $\TC_G(|K|) = n$. Then there is an open, equivariant cover $\{U_i\}_{i=1}^n$ such that the compositions
\[\begin{tikzcd}[ampersand replacement=\&]
	{U_i} \& {|K| \times |K|} \& {|K|}
	\arrow[hook, from=1-1, to=1-2]
	\arrow["{p_2}"', shift right=1, from=1-2, to=1-3]
	\arrow["{p_1}", shift left=1, from=1-2, to=1-3]
\end{tikzcd}\]
are  $G$-homotopic. If for every $U_i$ we found a corresponding open, equivariant set $V_i$ such that $\{U_1, \dots, U_{i-1}, V_i, U_{i+1}, \dots, U_n\}$ is still a cover and $\overline{V_i} \subseteq U_i$, then we would have that $\{V_i\}_{i=1}^n$ is an open, equivariant cover of $|K| \times |K|$, and therefore that $\{\overline{V_i}\}_{i=1}^n$ is a closed, equivariant cover of $|K| \times |K|$. Furthermore, we would have that the restrictions of the projections $p_1, p_2: \overline{V_i} \to |K|$ are $G$-homotopic, since they are so on $U_i$, and $\overline{V_i} \subseteq U_i$. Thus, in order to show that $\TC_G(|K|) \geq C_G(|K|)$, it would suffice to show that for any $U_i$ we may find an invariant open $V_i \subseteq |K| \times |K|$ such that $\{U_1, \dots, U_{i-1}, V_i, U_{i+1}, \dots, U_n\}$ is a cover of $|K| \times |K|$ and $\overline{V_i} \subseteq U_i$. 

Take any $U_i$ in the cover. Define $Y_i = \left(\bigcup_{\substack{j=1 \\ j \neq i}}^n U_i\right)^c$, a closed, invariant subset of $U_i$. Then $Y_i$ and $U_i^c$ are disjoint closed, equivariant subsets of $|K| \times |K|$. Since $|K| \times |K|$ is a realization of a $G$-simplicial complex, it is a completely normal $G$-space. By \cite[Lemma 3.12]{colman_grant_2012}, $|K| \times |K|$ is $G$-completely normal, so we may find open, invariant $V_i, W_i \subseteq |K| \times |K|$ such that $Y_i \subseteq V_i$, $U_i^c \subseteq W_i$, and $V_i \cap W_i = \varnothing$. Then, $\overline{V_i}$ is disjoint from $U_i^c$, and so $\overline{V_i} \subseteq U_i$. We also have by definition that $\{U_1, \dots, U_{i-1}, V_i, U_{i+1}, \dots, U_n\}$ is a cover. It follows that $\TC_G(|K|) \geq C_G(|K|)$.

Next, we show that $C_G(|K|) \geq \TC_G(|K|)$. Let $C_G(|K|) = n$. Then there is a closed, equivariant cover $\{C_i\}_{i=1}^n$ of $|K| \times |K|$ such that the composites 
\[\begin{tikzcd}[ampersand replacement=\&]
	{C_i} \& {|K| \times |K|} \& {|K|}
	\arrow[hook, from=1-1, to=1-2]
	\arrow["{p_2}"', shift right=1, from=1-2, to=1-3]
	\arrow["{p_1}", shift left=1, from=1-2, to=1-3]
\end{tikzcd}\]
are $G$-homotopic. Our aim is to find a collection of open, invariant sets $\{U_i\}_{i=1}^n$ such that $C_i \subseteq U_i$ and the composites
\[\begin{tikzcd}[ampersand replacement=\&]
	{U_i} \& {|K| \times |K|} \& {|K|}
	\arrow[hook, from=1-1, to=1-2]
	\arrow["{p_2}"', shift right=1, from=1-2, to=1-3]
	\arrow["{p_1}", shift left=1, from=1-2, to=1-3]
\end{tikzcd}\]
are $G$-homotopic. This suffices to show that $C_G(|K|) \geq \TC_G(|K|)$.

Take some $C_i$ and let $H: C_i \times I \to |K|$ be the homotopy from $p_1$ to $p_2$. By \cite[Theorem 9.3]{Murayama1983}, since $|K| \times |K|$ is a $G$-ANR, we may extend $H$ into a homotopy $\Tilde{H}: (|K| \times |K|) \times I \to X$ such that $\Tilde{H}|_{C \times I} = H$ and $\Tilde{H}|_{(|K| \times |K|) \times \{0\}} = p_1$. We may also apply \cite[Prop. 2.4]{WongPeter1991} to find an open $U_i \subseteq |K| \times |K|$ such that $C_i \subseteq U_i$ and there is a $G$-homotopy $F: U_i \times I \to |K| \times |K|$ from $\Tilde{H}|_{W \times \{1\}}$ to $p_2|_{U_i}$. We may restrict $\Tilde{H}$ and concatenate it with $F$ to get a $G$-homotopy from $p_1|_{U_i}$ to $p_2|_{U_i}$. The two projections on $U_i$ are $G$-homotopic, and so the result follows. 
\end{proof}

\begin{proposition}\label{prop:TCatmostSC}
Suppose that $K$ is any ordered $G$-simplicial complex. Then $\TC_G(|K|) \leq \SC_G(K)$.
\end{proposition}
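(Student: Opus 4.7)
The plan is to invoke Lemma~\ref{lem:ClosedCover} to recast $\TC_G(|K|)$ in terms of closed $G$-equivariant covers, and then translate a cover witnessing $\SC_G(K)$ directly into such a cover of $|K|\times|K|$.

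Suppose $\SC_G(K)=n$, and fix $b,c\ge 0$ with $\SC_G^{b,c}(K)=n$. By definition, there is an invariant cover $\{L_i\}_{i=1}^n$ of $\sd^b(K\times K)$ by subcomplexes such that, for each $i$, the two equivariant simplicial maps
\[
\pi_j\colon L_i\hookrightarrow \sd^b(K\times K)\xrightarrow{\iota} K\times K \xrightarrow{p_j} K, \qquad j=1,2,
\]
are equivariantly $c$-contiguous. Taking geometric realizations and using the $G$-equivariant subdivision homeomorphism $|\sd^b(K\times K)|\cong |K\times K|\cong |K|\times|K|$, each $|L_i|$ becomes a closed, $G$-invariant subset of $|K|\times|K|$, and the collection $\{|L_i|\}_{i=1}^n$ still covers $|K|\times|K|$ (subcomplexes realize to closed subsets, and invariance on the combinatorial level passes to invariance on the realization).

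It remains to verify that $p_1|_{|L_i|}\simeq_G p_2|_{|L_i|}$ for each $i$. By the contiguity-to-homotopy implication of Proposition~\ref{prop:ContiguitySummary}, equivariant $c$-contiguity of $\pi_1$ and $\pi_2$ yields $|\pi_1|\simeq_G|\pi_2|$. On the other hand, $\iota$ is an equivariant simplicial approximation of the identity, so by the same proposition $|\iota|$ is $G$-homotopic to the identity on $|K|\times|K|$ under the standard identification above. Post-composing with $p_j$ gives $|\pi_j|\simeq_G p_j|_{|L_i|}$ for $j=1,2$, and concatenating these three $G$-homotopies provides the required $G$-homotopy $p_1|_{|L_i|}\simeq_G p_2|_{|L_i|}$.

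With the closed $G$-equivariant cover $\{|L_i|\}$ in hand, Lemma~\ref{lem:ClosedCover} gives $\TC_G(|K|)\le n=\SC_G(K)$. The main obstacle I expect is the bookkeeping around Proposition~\ref{prop:ContiguitySummary}: namely, confirming that an equivariant approximation of the identity realizes to a map that is $G$-homotopically the identity. This reduces to verifying that the usual straight-line homotopy between an approximation and the identity is $G$-equivariant, which holds for ordered $G$-simplicial complexes because equivariance of the approximation ensures the ambient simplices containing the relevant image points are stabilized setwise by $G$, so the convex combinations remain in those simplices.
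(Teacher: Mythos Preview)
Your proof is correct and follows essentially the same route as the paper: realize the invariant subcomplex cover $\{L_i\}$ to a closed $G$-invariant cover of $|K|\times|K|$, use that equivariant contiguity of $\pi_1,\pi_2$ gives $|\pi_1|\simeq_G|\pi_2|$, link $|\pi_j|$ to $p_j|_{|L_i|}$ via the approximation property, and then apply Lemma~\ref{lem:ClosedCover}. The only minor difference is that the paper phrases the middle step as ``$\pi_j$ approximates $p_j$, hence $|\pi_j|\simeq_G p_j$'' directly, whereas you route it through $|\iota|\simeq_G \mathrm{id}$ and post-composition; these are the same argument unpacked to different depths.
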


\begin{proof}
 It suffices to show that $\SC_G^{(b, c)}(K) \geq \TC_G(|K|)$ for any $b, c \geq 0$. Suppose $\SC_G^{(b, c)}(K) = n$ and take an invariant cover $\{L_i\}_{i=1}^n$ of $\sd^b(K \times K)$ on which the restrictions
 \[\begin{tikzcd}[ampersand replacement=\&]
	{\pi_1, \pi_2: L_i} \& {\sd^b(K \times K)} \& {K \times K} \& K
	\arrow[hook, from=1-1, to=1-2]
	\arrow["\iota", from=1-2, to=1-3]
	\arrow["{p_2}"', shift right=1, from=1-3, to=1-4]
	\arrow["{p_1}", shift left=1, from=1-3, to=1-4]
\end{tikzcd}\]
are equivariantly $c$-contiguous. Then $\{|L_i|\}_{i=1}^n$ forms a closed, equivariant cover of $|K| \times |K|$. Furthermore, $|\pi_1|$ and $|\pi_2|$ are $G$-homotopic, since $\pi_1$ and $\pi_2$ are equivariantly contiguous. Since $\pi_1: L_i \to K$ approximates $p_1: |L_i| \to |K|$, and $\pi_2: L_i \to K$ approximates $p_2: |L_i| \to |K|$, we have a chain of $G$-homotopies
\[
p_1 \simeq_G |\pi_1| \simeq_G |\pi_2| \simeq_G p_2
\]
Thus, the two projections are $G$-homotopic on each $|L_i|$. Since $\{|L_i|\}_{i=1}^n$ is a closed, equivariant cover of $|K| \times |K|$ on which the projections are $G$-homotopic, Lemma~\ref{lem:ClosedCover} gives us that $\TC_G(|K|) \leq n$. 
\end{proof}

Now that we have introduced this additional means of finding topological complexity, we can relate simplicial and topological complexity in simplicial complexes.




\begin{theorem}\label{thm:SCeqTC}
Suppose $K$ is any ordered $G$-simplicial complex. Then 
$$
\TC_G(|K|) = \SC_G(K).
$$
\end{theorem}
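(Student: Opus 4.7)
The plan is to combine Proposition~\ref{prop:TCatmostSC}, which already gives $\TC_G(|K|) \leq \SC_G(K)$, with a proof of the reverse inequality $\SC_G(K) \leq \TC_G(|K|)$. Set $N := \TC_G(|K|)$. Using Lemma~\ref{lem:ClosedCover} together with the shrinking argument inside its proof, I can arrange an open invariant cover $\{U_i\}_{i=1}^N$ of $|K| \times |K|$ on which the projections $p_1, p_2$ are $G$-homotopic.

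Next, since $|K \times K|$ is compact, a Lebesgue-type argument in the piecewise-linear metric on $\sd^b(K \times K)$ lets me pick $b$ large enough that the closed star $\overline{\star(v)}$ of every vertex $v$ of $\sd^b(K \times K)$ lies in some $U_i$. I then build an equivariant labeling by choosing one representative $v$ from each $G$-orbit of vertices, picking any $i(v)$ with $\overline{\star(v)} \subseteq U_{i(v)}$, and extending by $i(g \cdot v) := i(v)$. This is well-defined because $U_{i(v)}$ is $G$-invariant, so $\overline{\star(g \cdot v)} = g \cdot \overline{\star(v)}$ still lies in $U_{i(v)}$. Setting $L_i := \bigcup_{i(v) = i} \overline{\star(v)}$ produces an invariant subcomplex cover $\{L_i\}_{i=1}^N$ of $\sd^b(K \times K)$ with $|L_i| \subseteq U_i$.

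To finish, for each $i$, the two compositions $L_i \hookrightarrow \sd^b(K \times K) \xrightarrow{\iota} K \times K \rightrightarrows K$ are equivariant simplicial maps. Because $\iota$ is an equivariant simplicial approximation of the identity, composing it with the simplicial projections $p_j: K \times K \to K$ produces equivariant simplicial approximations of $p_j|_{|L_i|}: |L_i| \to |K|$. These target maps are $G$-homotopic since $|L_i| \subseteq U_i$. Applying Corollary~\ref{cor:GContiguity} yields an $n_0 \geq 0$ such that any pair of equivariant simplicial approximations out of $\sd^{n_0}(L_i)$ to $p_1|_{|L_i|}$ and $p_2|_{|L_i|}$ is equivariantly $c$-contiguous for some $c$. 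Applied to the analogous compositions out of $\sd^{n_0}(L_i) \subseteq \sd^{b+n_0}(K \times K)$, this witnesses $\SC_G^{b + n_0, c}(K) \leq N$, so $\SC_G(K) \leq N$ as desired.

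The principal obstacle will be the interaction between the Lebesgue-type refinement and equivariance: one must produce a cover by invariant subcomplexes of $\sd^b(K \times K)$ that is simultaneously refined by $\{U_i\}$. Fortunately, invariance of the $U_i$ makes the orbit-labeling trick above consistent. A secondary subtlety is that the definition of $\SC_G^{b, c}$ concerns the two simplicial projections on the nose, while Corollary~\ref{cor:GContiguity} only supplies contiguity after subdivision; both are absorbed by the freedom to enlarge $b$ in the limit defining $\SC_G(K)$.
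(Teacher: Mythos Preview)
Your proposal is correct and follows essentially the same route as the paper: both reduce to Proposition~\ref{prop:TCatmostSC} for one inequality and then carry out Gonz\`alez's Lebesgue-number/star-cover argument equivariantly, with Corollary~\ref{cor:GContiguity} supplying the equivariant contiguity after sufficient subdivision. The paper simply cites \cite[Theorem~3.5]{gonzalez2018} \emph{mutatis mutandis} and points to Corollary~\ref{cor:GContiguity}, whereas you have spelled out the details (the orbit-labeling to make the $L_i$ invariant, taking a common $n_0$ over the finitely many $i$); your appeal to Lemma~\ref{lem:ClosedCover} is unnecessary since the open invariant cover comes directly from the definition of $\TC_G$, but this does no harm.
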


\begin{proof}
Given Proposition~\ref{prop:TCatmostSC}, it suffices to show that $\TC_G(|K|) \geq \SC_G(K)$. The remainder of the proof is \emph{mutatis mutanda} \cite[Theorem 3.5]{gonzalez2018}. To translate \cite[Theorem 3.5]{gonzalez2018} to the equivariant case, we refer to  Corollary~\ref{cor:GContiguity} to provide a sufficiently large integer $b_0$ such that for each $b_1 \ge b_0$ any pair of approximations $\phi_1,\phi_2: \sd^{b_1 + b}(|K| \times |K|) \rightarrow |K|$ of the projections $\pi_1,\pi_2$ respectively are equivariantly $c$-contiguous for some $c \ge 0$. 
\end{proof}


This result gives us a few useful corollaries as well.

\begin{corollary}
Let $P$ be a finite $G$-connected poset and $K = \scr{K}(P)$ be the resulting ordered $G$-simplicial complex. Then 
$$
\SC_G(K) < \infty.
$$
\end{corollary}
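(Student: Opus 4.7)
The plan is to simply combine Theorem~\ref{thm:SCeqTC} with Proposition~\ref{prop:finiteTCRealize}, since the hard work has already been done earlier in the section. Specifically, Proposition~\ref{prop:finiteTCRealize} guarantees that $\TC_G(|\scr{K}(P)|) < \infty$ whenever $P$ is a finite $G$-connected poset, and Theorem~\ref{thm:SCeqTC} identifies $\SC_G(K)$ with $\TC_G(|K|)$ for any ordered $G$-simplicial complex $K$. Both hypotheses are satisfied: $K = \scr{K}(P)$ is ordered by construction, and $P$ is assumed $G$-connected.

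So the argument is just the chain of equalities and inequalities
\[
\SC_G(K) = \TC_G(|\scr{K}(P)|) < \infty,
\]
where the first step is Theorem~\ref{thm:SCeqTC} applied to the ordered $G$-simplicial complex $K = \scr{K}(P)$, and the second step is Proposition~\ref{prop:finiteTCRealize} applied to the $G$-connected poset $P$.

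There is no real obstacle here; the corollary is essentially a formal consequence of the two main results developed in the preceding subsections. In writing up the proof I would simply cite the two results in the order above and note that the hypotheses are exactly what is given. If anything, the only thing worth drawing the reader's attention to is that $K = \scr{K}(P)$ is by construction an ordered $G$-simplicial complex (by the remark following the definition of ordered $G$-simplicial complex), so Theorem~\ref{thm:SCeqTC} applies without any further verification.
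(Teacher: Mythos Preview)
Your proposal is correct and matches the paper's own proof essentially verbatim: the paper simply cites Theorem~\ref{thm:SCeqTC} and Proposition~\ref{prop:finiteTCRealize}, exactly as you do. There is nothing to add.
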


\begin{proof}
The result follows from Theorem \ref{thm:SCeqTC}  and Proposition \ref{prop:finiteTCRealize}.
\end{proof}

\begin{corollary}
Suppose $P$ is a finite $G$-poset. Then 
$$
\SC_G(\scr K(P)) = \SC_G(\sd^b(\scr K(P)))
$$
for all $b \ge 0$. 
\end{corollary}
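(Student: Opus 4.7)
The plan is to reduce the corollary directly to Theorem~\ref{thm:SCeqTC} together with the $G$-equivariant version of the standard homeomorphism $|\sd(K)| \cong |K|$. The first step is to observe that $\sd^b(\scr{K}(P))$ is itself an ordered $G$-simplicial complex to which Theorem~\ref{thm:SCeqTC} applies. This follows by iterating the identities $\sd(\scr{K}(Q)) = \scr{K}(\chi(\scr{K}(Q))) = \scr{K}(\sd(Q))$ (using the remark that $\sd(Q) = \chi(\scr{K}(Q))$ for any poset $Q$) to get $\sd^b(\scr{K}(P)) = \scr{K}(\sd^b(P))$, and then noting that the order complex of any poset is naturally ordered, and is a $G$-simplicial complex whenever $P$ is a $G$-poset since the $G$-action on $\sd^b(P)$ described in the preliminaries is by poset automorphisms.

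Next I would argue that the homeomorphism $|\sd^b(\scr{K}(P))| \cong |\scr{K}(P)|$ is $G$-equivariant. The standard homeomorphism $J \colon |\sd(K)| \to |K|$ sends a vertex $\sigma \in \sd(K)$ (i.e., a simplex of $K$) to its barycenter in $|\sigma| \subseteq |K|$ and extends linearly on realizations of simplices. Since the $G$-action on $\sd(K)$ is, by construction, induced from the $G$-action on the vertices of $K$ by $g \cdot \{v_0 < \dots < v_n\} = \{g \cdot v_0 < \dots < g \cdot v_n\}$, the barycenter of $g \cdot \sigma$ is the image of the barycenter of $\sigma$ under the action of $g$, and $G$-equivariance follows by linearity on each simplex. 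Iterating gives a $G$-equivariant homeomorphism $|\sd^b(\scr{K}(P))| \cong |\scr{K}(P)|$.

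Finally, applying Theorem~\ref{thm:SCeqTC} on both sides yields
\[
\SC_G(\scr{K}(P)) = \TC_G(|\scr{K}(P)|) \quad \text{and} \quad \SC_G(\sd^b(\scr{K}(P))) = \TC_G(|\sd^b(\scr{K}(P))|),
\]
and since $\TC_G$ is manifestly a $G$-homeomorphism invariant (open invariant covers and equivariant sections pull back through any $G$-homeomorphism), the equivariant homeomorphism from the previous paragraph gives $\TC_G(|\sd^b(\scr{K}(P))|) = \TC_G(|\scr{K}(P)|)$. Chaining these equalities proves the corollary.

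The only step requiring any real care is the $G$-equivariance of $J$, which is essentially formal but deserves explicit mention since the proposition earlier in the paper only records $J$ as a bare homeomorphism. I do not anticipate this to be a serious obstacle, as it is a routine consequence of the way the $G$-action on $\sd$ is defined.
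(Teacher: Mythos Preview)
Your proposal is correct and follows essentially the same approach as the paper: apply Theorem~\ref{thm:SCeqTC} to both $\scr{K}(P)$ and $\sd^b(\scr{K}(P))$ and use the identification $|\sd^b(\scr{K}(P))|\cong|\scr{K}(P)|$ to equate the resulting values of $\TC_G$. You are simply more explicit than the paper about two points it treats as standing conventions---that $\sd^b(\scr{K}(P))$ is again an ordered $G$-simplicial complex, and that the subdivision homeomorphism is $G$-equivariant---both of which are indeed routine given the paper's earlier remarks.
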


\begin{proof}
By Theorem~\ref{thm:SCeqTC} we have the first and third equalities below. The second equality holds because $|\scr K(P)| = |\sd^b(\scr K(P))|$ for all $b \ge 0$. Then
\[
 \SC_G(K) = \TC_G(|K|) = \TC_G(|\sd^b(K)|) = \SC_G(\sd^b(K)).
\]
\end{proof}

\printbibliography

\end{document}